\documentclass[leqno]{article}
\usepackage[papersize={176.76mm,250mm},margin=0.75in ]{geometry}
\usepackage{changepage}
\usepackage{fancyhdr} 
\fancyhf{}
\cfoot{\thepage}

\pagestyle{fancy}

\usepackage{etex}
\usepackage{amsmath, bm,amssymb,amsfonts,dsfont,amsthm}
\usepackage{graphicx}
\usepackage[usenames,dvipsnames,svgnames,table]{xcolor}
\usepackage{multicol}
\usepackage{mathrsfs}
\usepackage{mathtools}
\usepackage{amsthm}
\usepackage{enumitem}
\usepackage{lineno}
\definecolor{antiquefuchsia}{rgb}{0.57, 0.36, 0.51}
\definecolor{auburn}{rgb}{0.43, 0.21, 0.1}
\definecolor{darkcerulean}{rgb}{0.03, 0.27, 0.49}
\definecolor{denim}{rgb}{0.08, 0.38, 0.74}
\definecolor{black}{rgb}{0.0, 0.0, 0.0}
\definecolor{sacramentostategreen}{rgb}{0.0, 0.34, 0.25}
\definecolor{phthaloblue}{rgb}{0.0, 0.06, 0.54}
\usepackage[colorlinks=true,linkcolor=black, urlcolor=auburn, filecolor=blue, citecolor=sacramentostategreen,backref=page]{hyperref}

\usepackage{etoolbox}
\makeatletter
\patchcmd{\BR@backref}{\newblock}{\newblock(Cited on page~}{}{}
\patchcmd{\BR@backref}{\par}{)\par}{}{}
\makeatother

\usepackage{relsize}

\usepackage[all]{xy}
\usepackage{tikz-cd}
\usepackage{array}
\usepackage{tensor}
\usepackage[cal=mathpi,calscaled=.94, bb=ams,frakscaled=.97,scr=rsfso]{mathalfa} 
\usepackage[utf8]{inputenc}

\usepackage{pdflscape}
\usepackage{float}

\usepackage{scalerel,stackengine}

\usetikzlibrary{matrix}
\usetikzlibrary{arrows}

\newcommand{\ds}[1]{\ensuremath{\mathds{#1}}}

\DeclareMathOperator{\rr}{\reflectbox{\ensuremath{R}}}

\newcommand{\curly}[1]{\ensuremath{\mathscr{#1}}}
\newcommand{\scr}[1]{\ensuremath{\mathscr{#1}}}
\newcommand{\Os}{\ensuremath{\mathscr{O}}}

\newcommand{\prj}{\ensuremath{\mathrm{Proj}}}
\newcommand{\spc}{\ensuremath{\mathrm{Spec}}}
\newcommand{\spec}{\ensuremath{\mathrm{Spec~}}}
\newcommand{\spf}{\ensuremath{\mathrm{Spf~}}}
\newcommand{\spa}{\ensuremath{\mathrm{Sp~}}}
\newcommand{\rig}{\ensuremath{\mathrm{rig}}}

\newcommand{\pht}{\ensuremath{\hat{\phantom{~}}}}

\newcommand{\nfld}{\mathlarger{\mathlarger{\mathlarger{\mathfrak{o}}}}}

\DeclareMathOperator{\kr}{Ker}
\DeclareMathOperator{\ckr}{Coker}
\DeclareMathOperator{\chr}{Char}

\DeclareMathOperator{\rad}{rad} 
\DeclareMathOperator{\eka}{eka}
\DeclareMathOperator{\Mor}{Mor}

\newcommand{\perfp}{\ensuremath{\ds{P}_K^{n,\mathrm{ad,perf}}}}
\newcommand{\perfq}{\ensuremath{\ds{P}_K^{n,\mathrm{eka}\ds{Q}}}}

\newcommand{\lr}[1]{\left\langle{#1}\right\rangle}

\newcommand{\id}[1]{\ensuremath{{\mathfrak{#1}}}}

\newcommand{\abs}[1]{\ensuremath{{\left\vert{#1}\right\vert}}}

\newcommand{\bs}{\ensuremath{\backslash}}
\newcommand{\xra}[1]{\xrightarrow{#1}}
\newcommand{\ra}{\rightarrow}

\newcommand{\til}[1]{\widetilde{#1}}
\newcommand{\wbr}[1]{{\overline{#1}}}

\usepackage{ccfonts,eucal}
\usepackage[euler-digits,euler-hat-accent]{eulervm}
\usepackage{sseq}

\numberwithin{equation}{section}
\newtheorem{theorem}{Theorem}[section]
\newtheorem*{theorem*}{Theorem}
\newtheorem{lemma}[theorem]{Lemma}
\newtheorem*{lemma*}{Lemma}
\newtheorem{proposition}[theorem]{Proposition}
\newtheorem*{proposition*}{Proposition}
\newtheorem{corollary}[theorem]{Corollary}
\newtheorem*{corollary*}{Corollary}
\newtheorem{mdef}[theorem]{Definition}
\let\olddefinition\mdef
\renewcommand{\mdef}{\olddefinition\normalfont}

\newtheorem{exam}[theorem]{Example}
\let\oldexample\exam
\renewcommand{\exam}{\oldexample\normalfont}

\newtheorem{rem}[theorem]{Remark}
\let\oldremark\rem
\renewcommand{\rem}{\oldremark\normalfont}

\newtheorem{slogan}[theorem]{Slogan}
\let\oldslogan\slogan
\renewcommand{\slogan}{\oldslogan\normalfont}

\let\oldnotation\notation
\renewcommand{\notation}{\oldnotation\normalfont}

\newtheorem{construction}[theorem]{Construction}
\let\oldconstruction\construction
\renewcommand{\construction}{\oldconstruction\normalfont}

\stackMath
\newcommand\rwhat[1]{%
\savestack{\tmpbox}{\stretchto{%
  \scaleto{%
    \scalerel*[\widthof{\ensuremath{#1}}]{\kern-.6pt\bigwedge\kern-.6pt}%
    {\rule[-\textheight/2]{1ex}{\textheight}}
  }{\textheight}%
}{0.5ex}}%
\stackon[1pt]{#1}{\tmpbox}%
}
\parskip 1ex

\setcounter{secnumdepth}{5}

\usepackage{xfrac}
\newcommand{\myreferences}{/home/harpreet/Dropbox/Biblio/bibThesis}
\usepackage{manfnt}


\begin{document}

\title{Formal Schemes of Rational Degree}
\author{Harpreet Singh Bedi~~~{bedi@gwu.edu}}
\maketitle
 \begin{flushright}{\small In honor of Michel Raynaud $\ldots$ one year later.} \end{flushright}
\begin{abstract}
Non notherian Formal schemes of perfectoid type (for example $\ds{Z}_p[p^{1/p^\infty}]\lr{X^{1/p^\infty} }$ and its multivariate version) with rational degree are constructed and are shown to be admissible. These formal schemes are non Notherian avatar of Tate affinoid algebras. The corresponding notion of topologically finite presentation are constructed and Gabber's Lemma, admissible blow ups (Raynaud's approach) are shown to hold under certain assumptions. A new notion of rings called $\eka^d$ are introduced, which recover most examples of perfectoid affinoid algebras, without resorting to Huber's construction, Witt vectors or Frobenius.
\end{abstract}

\tableofcontents
\section{Introduction}
\subsection{Results}
This paper constructs new Non Notherian formal schemes of rational degree. These formal schemes cover the case of non notherian Perfectoidish formal schemes (inspired by \cite{scholze_1})
of rational degree, but completely avoids Witt Vectors and Frobenius, by adopting a much simpler approach of attaching $d$th power roots (called $\eka^d$ in this paper). The rings are restricted power series of the form $R\lr{X,X^{1/p},X^{1/p^2},\ldots}$ often denoted as $R\lr{X^{1/p^\infty}}$ (in this paper $R\lr{X}_\infty$). The elements are of the form
\begin{equation}
\sum_{i\in{\ds{N}[1/p]}^n}c_iX^i\in R[[X,X^{1/p},X^{1/p^2},\ldots]],\qquad c_i\in R,\qquad\lim_{i\ra\infty}c_i=0.
\end{equation}
The multivariate version is defined similarly as $R\lr{X_1^{1/p^\infty},X_2^{1/p^\infty},\ldots,X_n^{1/p^\infty} }$ (in this paper $R\lr{X_1,X_2,\ldots, X_n}_\infty$).
$R$ itself can be non notherian, for example, $\ds{Z}_p[p^{1/p^\infty}]$.  The degree is often $\ds{Z}[1/p]$, but formal schemes of degree $\ds{Q}$ are also constructed (see section \ref{ratBun}).

In lemma \ref{admissible1} the admissibility of such schemes is proved.
\begin{lemma*}
Let $R$ be an admissible ring with ideal of definition $\id{a}$, then the ring $A=R\lr{X_1,\ldots,X_r}_\infty$ is admissible.
\end{lemma*}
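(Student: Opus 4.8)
The plan is to identify $A=R\lr{X_1,\ldots,X_r}_\infty$ with the $\id{a}$-adic completion of an \emph{uncompleted} monoid algebra and then to read off admissibility from that of $R$. Throughout I use that $R$, being admissible, is complete and separated for the $\id{a}$-adic topology, and I take the ideal of definition $\id{a}$ to be finitely generated, say $\id{a}=(f_1,\ldots,f_s)$ (of ``finite ideal type''); this hypothesis will be essential. Write $M=\ds{N}[1/p]^r$ and let $A_0=R[M]$ be the monoid $R$-algebra, i.e.\ the ring of \emph{finite} sums $\sum_{i\in M}c_iX^i$. As an $R$-module $A_0$ is free on $\{X^i\}_{i\in M}$, hence $R$-flat, so for every $n$ there is a canonical isomorphism $A_0/\id{a}^nA_0\cong(R/\id{a}^n)[M]$. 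Passing to the inverse limit and using completeness and separatedness of $R$, the ring $\varprojlim_n(R/\id{a}^n)[M]$ is exactly the set of formal sums $\sum_{i\in M}c_iX^i$ with $c_i\in R$ and $c_i\to 0$ $\id{a}$-adically (for each $n$, all but finitely many $c_i$ lie in $\id{a}^n$) --- that is, it is $A$, and its natural topology is the inverse-limit topology, which is the $\id{a}A$-adic topology. So $A=\widehat{A_0}$ with candidate ideal of definition $\id{a}A=(f_1,\ldots,f_s)A$.

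Separatedness is cheap: the $X^i$ remain a ``topological basis'' in the sense that if $x=\sum c_iX^i$ lies in $\id{a}^nA$ --- writing $x=\sum_j a_jg_j$ with $a_j\in\id{a}^n$, $g_j\in A$ --- then the coefficient of each $X^i$ is a finite sum $\sum_j a_j(\text{coefficient of }X^i\text{ in }g_j)\in\id{a}^n$. Hence $\bigcap_n\id{a}^nA$ has all coefficients in $\bigcap_n\id{a}^n=0$, and $A$ is $\id{a}A$-adically separated. The same bookkeeping shows that if $\id{a}$-torsion-freeness is part of ``admissible'', it transfers from $R$ to $A$: an element killed by a power of $\id{a}$ has all coefficients $\id{a}$-power-torsion in $R$, hence zero.

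The substance is completeness --- this is exactly where non-Noetherianity could bite, since the $I$-adic completion of a ring need not be $I$-adically complete once $I$ is not finitely generated. Here, because $\id{a}=(f_1,\ldots,f_s)$, the ideal $\id{a}^n$ is generated by the finitely many monomials $f^\alpha$ with $|\alpha|=n$, so $\id{a}^nA=\sum_{|\alpha|=n}f^\alpha A$ and $\id{a}^nA_0=\sum_{|\alpha|=n}f^\alpha A_0$, and a truncation/approximation argument gives that $A_0/\id{a}^nA_0\to A/\id{a}^nA$ is an isomorphism for every $n$: surjectivity, because any $\sum c_iX^i\in A$ agrees modulo $\id{a}^nA$ with its finite partial sum over $\{i:c_i\notin\id{a}^n\}$ (the tail lying in $\sum_{|\alpha|=n}f^\alpha A$ by the identity $\id{a}^{n+k}=\sum_{|\alpha|=n}f^\alpha\id{a}^k$); injectivity, because an element of $A_0$ lying in $\sum_{|\alpha|=n}f^\alpha A$ can be rewritten, after truncating each coefficient to the finitely many monomials actually occurring, as an element of $\sum_{|\alpha|=n}f^\alpha A_0=\id{a}^nA_0$. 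Consequently
\[
\varprojlim_n A/\id{a}^nA\;\cong\;\varprojlim_n A_0/\id{a}^nA_0\;=\;\widehat{A_0}\;=\;A,
\]
and one checks the natural comparison map $A\to\varprojlim_n A/\id{a}^nA$ realizes this isomorphism; hence $A$ is $\id{a}A$-adically complete.

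Putting the three points together, $A$ is complete and separated for the $\id{a}A$-adic topology, $\id{a}A$ is a finitely generated ideal of definition, and its powers $\id{a}^nA$ form a fundamental system of neighbourhoods of $0$; so $A$ is admissible (with whatever extra torsion-freeness condition is built into the definition inherited from $R$ as above). The one genuine obstacle is the completeness step: everything reduces to the isomorphisms $A_0/\id{a}^nA_0\xrightarrow{\ \sim\ }A/\id{a}^nA$, and these --- the non-Noetherian replacement for ``completion along a finitely generated ideal is complete'' --- rest squarely on being able to take $\id{a}$ finitely generated; the rest is routine monoid-algebra and inverse-limit bookkeeping.
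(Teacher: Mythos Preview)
Your proof is correct. The paper's own argument is quite brief: it observes that the EGA/Bourbaki proof for the classical restricted power series ring $R\lr{T_1,\ldots,T_r}$ carries over verbatim once the index monoid $\ds{N}^r$ is replaced by $(\ds{N}[1/p])^r$, taking as a fundamental system of neighborhoods of $0$ the sets $V'=\{\sum c_\alpha X^\alpha : c_\alpha\in V\text{ for all }\alpha\}$ for $V$ a neighborhood of $0$ in $R$; completeness and separatedness then come for free from the inverse-limit description, and the ideal $I'=\{\sum c_\alpha X^\alpha : c_\alpha\in I\}$ serves as an ideal of definition. Your approach is the same at the conceptual level---identify $A$ with the completion of the monoid algebra $R[\ds{N}[1/p]^r]$---but you go further and prove that the $\id{a}A$-adic topology actually coincides with the inverse-limit topology, via the isomorphisms $A_0/\id{a}^nA_0\xrightarrow{\sim}A/\id{a}^nA$. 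That step genuinely needs $\id{a}$ to be finitely generated (a standing hypothesis in the paper, so no harm), whereas the bare EGA admissibility argument does not: in general $I'$ may strictly contain $IA$, and the paper only needs the former. So you end up proving a little more (that $A$ is $\id{a}A$-adic, not merely admissible) at the cost of the finite-generation assumption. One minor point: your aside about $\id{a}$-torsion-freeness is unnecessary here---the EGA notion of \emph{admissible ring} has no such condition; the torsion-freeness appearing later in the paper is part of the separate notion of \emph{admissible $R$-algebra}.
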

The above lemma makes it possible to do algebraic geometry on perfectoidish schemes in a natural manner. Furthermore, this allows to pass from $R$ to its fraction field using Raynaud's generic fibre approach.

The construction of $\eka^d$ rings is done section \ref{rat1}, and a ton of examples are given. The lemma \ref{admissible1} again holds in the setting of $\eka^d$ rings. In section 7, an analogue of topologically finite type and topologically finite presentation for the rings $R\lr{X_1,\ldots,X_n}_\infty$ are constructed and called topologically finite eka type and topologically finite eka presentation.

The coherence of the rings $R[T^{1/p^\infty}]$ and $R\lr{T}_\infty$ is proved in Proposition \ref{coherent1} and Corollary \ref{coherent2}. 

\begin{proposition*}
Let $A$ be a Noetherian admissible ring with ideal of definition generated by a single element $a$ and $R=A[a^{1/p},a^{1/p^2},\ldots]$ ($R$ is $\eka^p$), then $R[T^{1/p^\infty}]$ is coherent.
\end{proposition*}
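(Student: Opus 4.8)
The plan is to present $R[T^{1/p^\infty}]$ as a filtered colimit of Noetherian rings with flat transition homomorphisms, and then to invoke the fact that a filtered colimit of coherent rings along flat maps is again coherent.

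Concretely, I would write $B_m := A[a^{1/p^m}] = A[U]/(U^{p^m}-a)$, so that $R = \mathrm{colim}_m B_m$, and, since forming a one-variable polynomial ring and adjoining roots of a variable both commute with filtered colimits,
\[
R\bigl[T^{1/p^\infty}\bigr] \;=\; \mathrm{colim}_{(m,n)\in\ds{N}^2} B_m\bigl[T^{1/p^n}\bigr],
\]
a filtered colimit over the directed poset $\ds{N}^2$. (This is the uncompleted ring; the completed ring $R\lr{T}_\infty$ is deduced afterwards in Corollary \ref{coherent2}.) Because $A$ is Noetherian, each $B_m$ is finite over $A$, hence Noetherian; therefore each $B_m[T^{1/p^n}]$, a one-variable polynomial ring over a Noetherian ring, is Noetherian, in particular coherent.

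It remains to check that the transition maps are flat, and this is where the hypothesis that the ideal of definition is principal is used: adjoining a single $p$-th root is a free operation. Setting $b_m = a^{1/p^m}\in B_m$, one has $B_{m+1} = B_m[W]/(W^p - b_m)$, which is $B_m$-free with basis $1,W,\dots,W^{p-1}$; similarly $B_m[T^{1/p^{n+1}}]$ is free of rank $p$ over $B_m[T^{1/p^n}]$ with basis $1, T^{1/p^{n+1}},\dots, T^{(p-1)/p^{n+1}}$. Hence every structure map $B_m[T^{1/p^n}] \to B_{m'}[T^{1/p^{n'}}]$ (for $m'\ge m$, $n'\ge n$) is a finite composite of such maps, so it is free of finite rank, in particular faithfully flat; consequently $R[T^{1/p^\infty}]$ is flat over each $B_m[T^{1/p^n}]$.

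Finally I would invoke (or, if it is not already available, prove in two lines) the statement: if $C = \mathrm{colim}_i C_i$ is a filtered colimit of coherent rings along flat transition maps, then $C$ is coherent. The proof: given a finitely generated ideal $I = (f_1,\dots,f_r)\subseteq C$, descend the $f_j$ to elements $g_j$ of some $C_i$; coherence of $C_i$ gives a finite presentation $C_i^l \to C_i^r \to (g_1,\dots,g_r)C_i \to 0$; tensoring up along $C_i \to C$, which is flat, keeps the sequence exact and identifies $(g_1,\dots,g_r)C_i \otimes_{C_i} C$ with $(f_1,\dots,f_r)C = I$, so $I$ is finitely presented. Applying this with $C_i = B_m[T^{1/p^n}]$ finishes the proof. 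I expect the only real point to watch to be bookkeeping: that the freeness of the transition maps genuinely hinges on the single generator $a$ (for a non-principal ideal of definition one would have to verify separately that adjoining compatible $p$-power roots of several generators stays flat over the base), and that the colimit above is the uncompleted one so that the abstract lemma applies verbatim; the Noetherian hypothesis on $A$ enters only to make the terms $B_m[T^{1/p^n}]$ coherent, and could be relaxed to any $A$ for which all $B_m[S]$ are coherent.
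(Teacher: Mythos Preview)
Your argument is correct and follows the same route as the paper's: descend the generators of a given finitely generated ideal to a Noetherian stage $B_i = (A[a^{1/p^i}])[T^{1/p^i}]$, take a finite presentation there, and carry it up to the colimit. The paper simply writes ``applying the direct limit functor'' for this last step without further comment; your explicit check that the transition maps are free (hence flat), packaged into the clean general lemma that a filtered colimit of coherent rings along flat maps is coherent, is precisely what makes that step rigorous.
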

\begin{corollary*}
$R\lr{T^{1/p^\infty}}$ is coherent. 
\end{corollary*}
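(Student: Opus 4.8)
The plan is to treat $B:=R\lr{T^{1/p^\infty}}$ as the $a$-adic completion of the coherent ring $S:=R[T^{1/p^\infty}]$ of Proposition \ref{coherent1}, and to propagate coherence through the completion. Since the ideal of definition $(a)$ is principal, hence finitely generated, one has $B/a^kB=S/a^kS=(R/a^kR)[T^{1/p^\infty}]$ for every $k$, so that $B=\varprojlim_k S/a^kS$ is $a$-adically complete and separated with ideal of definition $(a)$; moreover $B$ has bounded $a$-torsion, inherited from the Noetherian ring $A$ through the description of $R$ as a filtered colimit of finite free $A$-algebras (the torsion being trivial when $A$ is $a$-torsion free), and it is admissible, as in the $\eka$-type analogue of Lemma \ref{admissible1}. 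With these structural facts in hand, the coherence of $B$ reduces to the coherence of the single reduction $B/aB$.

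The first substantive step is therefore to show that $B/aB\cong (R/aR)[T^{1/p^\infty}]$ is coherent. Reducing a restricted power series $\sum_i c_iT^i$ modulo $a$ kills all but finitely many of its coefficients (those $c_i\notin aR$), which makes the identification $B/aB\cong (R/aR)[T^{1/p^\infty}]$ transparent. Now
\[
R/aR=A[a^{1/p},a^{1/p^2},\ldots]/(a)=\varinjlim_m (A/aA)[x_m]/(x_m^{p^m}),
\]
where the transition map sends $x_m\mapsto x_{m+1}^p$ and is finite free, hence faithfully flat; consequently
\[
(R/aR)[T^{1/p^\infty}]=\varinjlim_{m,n}(A/aA)[x_m,T^{1/p^n}]/(x_m^{p^m})
\]
is a filtered colimit of Noetherian rings along faithfully flat transition maps, and so is coherent. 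This is precisely the mechanism underlying Proposition \ref{coherent1}, now run over the Noetherian ring $A/aA$ in place of $A$.

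It remains to pass from ``$B/aB$ coherent'' to ``$B$ coherent'', and this is the main obstacle, since coherence---unlike Noetherianity---does not survive adic completion for free. The mechanism is the standard d\'evissage for adic completions: given a finitely generated ideal $J\subset B$, one lifts to $B$ a finite presentation of the $B/aB$-module $J/aJ$ and then refines the chosen relations $a$-adically, using the completeness of $B$ and the finite generation of $(a)$ to bound the error at each stage, so that in the limit one obtains a finite presentation of $J$ over $B$; the bounded $a$-torsion is what keeps these successive errors controlled. The argument genuinely relies on $(a)$ being principal and on $B$ being admissible with bounded torsion, without which the completion of a coherent ring need not be coherent. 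This d\'evissage is the coherent counterpart of the stability of Noetherianity under completion along an ideal, and---in the form used to prove that topologically finitely presented algebras over complete rank-one valuation rings are coherent---is available in the rigid-geometry literature; granting it, the Corollary follows.
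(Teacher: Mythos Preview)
Your approach matches the paper's second proof: reduce modulo $a$ to see that $B/aB\cong (R/aR)[T^{1/p^\infty}]$ is coherent (exactly the mechanism of Proposition \ref{coherent1} run over $A/aA$), then for a finitely generated ideal $J\subset B$ use the exact sequence $0\to N\to B^n\to J\to 0$ and the finite presentation of $J/aJ$ to deduce that $N/aN$ is finite, and lift by $a$-adic approximation. The paper phrases the lift via Matsumura, Theorem 8.4, without explicitly invoking bounded $a$-torsion, and also offers a first proof that simply completes the finite presentation \eqref{eqcoherent1} from Proposition \ref{coherent1} directly.
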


Finally, the following flatness results shown in Lemma \ref{G1} are needed to prove Gabber's Lemma.

\begin{lemma*}
  \begin{enumerate}
    \item  The canonical map $A[T^{1/p^\infty}]\ra A\lr{T^{1/p^\infty}}$ is flat for $A$ Notherian.
    \item The canonical map $K[T^{1/p^\infty}]\ra K\lr{T^{1/p^\infty}}$ is flat for $K$ where $K$ is a field.
    \item The canonical map $R[T^{1/p^\infty}]\ra R\lr{T^{1/p^\infty}}$ is flat for $R$ $\eka^p$.

        \item Let $B=K\lr{\zeta^{1/p^\infty}}$, then the canonical map $B[T]\ra B\lr{T}$ is flat.
    \item Let $B=K\lr{\zeta^{1/p^\infty}}$, then the canonical map $B[T^{1/p^\infty}]\ra B\lr{T^{1/p^\infty}}$ is flat.

  \end{enumerate}

\end{lemma*}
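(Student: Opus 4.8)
The plan is to reduce every one of the five maps to a single fact, which I will call the \emph{engine}: over a Noetherian ring the $\mathfrak a$-adic completion of a free module is flat (proved by Artin--Rees and the Mittag--Leffler criterion, as in the Stacks Project treatment of completions). To feed the maps into the engine I use the filtered presentation $A[T^{1/p^\infty}]=\varinjlim_n A[T^{1/p^n}]$, whose transition maps $A[T^{1/p^n}]\hookrightarrow A[T^{1/p^{n+1}}]$ are finite and free; hence $A[T^{1/p^\infty}]$ is a free module over each $A[T^{1/p^n}]$, with basis the monomials $T^t$, $t\in\mathbb N[1/p]\cap[0,1/p^n)$, and, after regrouping a restricted series by the class of its exponent modulo $\frac1{p^n}\mathbb Z$, the ring $A\langle T^{1/p^\infty}\rangle$ is identified with the $\mathfrak a$-adic completion of this free $A[T^{1/p^n}]$-module (equivalently, of the free $A\langle T^{1/p^n}\rangle$-module $\bigcup_m A\langle T^{1/p^m}\rangle$). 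The remaining ingredient is formal: any finitely generated ideal $I$ of $A[T^{1/p^\infty}]$ has all its generators in some $A[T^{1/p^n}]$, so $I=I_n\cdot A[T^{1/p^\infty}]$ with $I_n$ finitely generated.

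Granting this, part (1) is immediate: $A[T^{1/p^n}]$ is Noetherian, so by the engine $A\langle T^{1/p^\infty}\rangle$ is flat over $A[T^{1/p^n}]$, and since $A[T^{1/p^\infty}]$ is free over $A[T^{1/p^n}]$, change of rings for $\mathrm{Tor}$ gives
\[
\mathrm{Tor}_1^{A[T^{1/p^\infty}]}\!\bigl(A[T^{1/p^\infty}]/I,\;A\langle T^{1/p^\infty}\rangle\bigr)\;\cong\;\mathrm{Tor}_1^{A[T^{1/p^n}]}\!\bigl(A[T^{1/p^n}]/I_n,\;A\langle T^{1/p^\infty}\rangle\bigr)=0 .
\]
For part (2) it is cleanest to note that $K[T^{1/p^\infty}]=\varinjlim_n K[T^{1/p^n}]$ is a filtered colimit of principal ideal domains along flat maps, hence a Bézout --- a fortiori Prüfer --- domain, over which every torsion-free module is flat; and $K\langle T^{1/p^\infty}\rangle$ is a torsion-free $K[T^{1/p^\infty}]$-module (alternatively, run the engine argument with $K$ in place of $A$). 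Part (3) is again the same argument, now carried over the doubly-indexed filtered system $A[a^{1/p^m}][T^{1/p^n}]$: each such ring is module-finite, hence Noetherian, over $A[T^{1/p^n}]$; $R[T^{1/p^\infty}]$ is free over each of them; $R\langle T^{1/p^\infty}\rangle$ is (to be identified with) the $\mathfrak a$-adic completion of this free module; a finitely generated ideal of $R[T^{1/p^\infty}]$ is extended from a finite stage; and the $\mathrm{Tor}$ computation above closes it. The one point requiring care in (3) is precisely that $R\langle T^{1/p^\infty}\rangle$ is the relevant $\mathfrak a$-adic completion --- i.e. that passing to $R$ from its Noetherian pieces creates no coefficients outside $R$ --- which is where the admissibility of the base enters.

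The genuinely harder cases are (4) and (5), because $B=K\langle\zeta^{1/p^\infty}\rangle$ is \emph{itself} a completion, so $B[T]$ and $B[T^{1/p^\infty}]$ are not filtered colimits of Noetherian rings and the engine does not apply to them directly. The plan is twofold. First, descend to an integral, $\pi$-torsion-free level: for a pseudo-uniformizer $\pi$, write $B=B^\circ[1/\pi]$ with $B^\circ$ the $\pi$-adic completion of a $p$-power-root extension of a Noetherian ring; then $B[T]\to B\langle T\rangle$ and $B[T^{1/p^\infty}]\to B\langle T^{1/p^\infty}\rangle$ are the localizations at $\pi$ of the corresponding maps over $B^\circ$, so it suffices to prove flatness of the latter. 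Second, observe that $B^\circ\langle T\rangle$ (resp. $B^\circ\langle T^{1/p^\infty}\rangle$) is the $\pi$-adic completion of the \emph{polynomial} ring over a Noetherian ring in the variables $\zeta^{1/p^\infty}$ and $T$ (resp. $T^{1/p^\infty}$), hence flat over that polynomial ring by the engine. What is left is to propagate this flatness from the polynomial-in-$\zeta$ ring up to the intermediate restricted-series-in-$\zeta$ ring $B^\circ[T]$ (resp. $B^\circ[T^{1/p^\infty}]$).

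I expect this last propagation to be the main obstacle, and it is genuinely non-formal: it cannot be read off from the faithful flatness of the inclusion of the polynomial ring into $B^\circ[T]$, since flatness does not descend along a faithfully flat ring map in that direction. The route I would take uses the coherence of $B^\circ[T]$ and $B[T^{1/p^\infty}]$ established earlier (Proposition~\ref{coherent1}, Corollary~\ref{coherent2}), which allows one to work with a cofinal family of \emph{finite free} presentations of the finitely generated ideals, together with the elementary fact that $B^\circ[T]$ and its completion $B^\circ\langle T\rangle$ have identical reductions modulo every power of $\pi$; applying a $\pi$-adic form of the local criterion for flatness to these presentations should then force $\mathrm{Tor}_1$ against $B^\circ\langle T\rangle$ to vanish, and inverting $\pi$ finishes (4) and (5).
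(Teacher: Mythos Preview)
Your route for (1)--(3) is correct but different from the paper's. The paper does not use your engine (completion of a flat module over a Noetherian ring is flat); instead it proceeds uniformly across all five parts via \emph{coherence of the base} together with Proposition~\ref{G2}. Concretely: in each case the relevant ring $S$ is shown to have every finitely generated ideal $\mathfrak q$ finitely presented; Proposition~\ref{G2} then gives $\mathfrak q\otimes_S\widehat S\cong\widehat{\mathfrak q}$, and since $\widehat{\mathfrak q}\hookrightarrow\widehat S$, the ideal criterion yields flatness of $S\to\widehat S$. For (1)--(3) the coherence step is the same observation you make --- a finite set of generators sits in a Noetherian stage $A[T^{1/p^i}]$ (or $A[a^{1/p^i}][T^{1/p^i}]$) --- but the paper uses it to produce a finite \emph{presentation} and then invokes Proposition~\ref{G2}, whereas you use it to descend a Tor computation to that Noetherian stage and invoke your engine there. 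Your argument is more module-theoretic and bypasses Proposition~\ref{G2}; the paper's is more uniform and simply reuses the machinery it has already built.

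Where you diverge sharply is (4) and (5). You treat these as substantially harder, pass to an integral model $B^\circ$, and sketch a route via a $\pi$-adic local flatness criterion that you yourself flag as incomplete. The paper, by contrast, treats (4) and (5) on exactly the same footing as (1)--(3): it writes $B[T]=K\langle\zeta^{1/p^\infty}\rangle\otimes_K K[T]$ and asserts that every finitely generated ideal $\mathfrak b\subset B[T]$ has the form $B\otimes_K\mathfrak b'$ for an ideal $\mathfrak b'\subset K[T]$; finite presentation of $\mathfrak b'$ then tensors up to one for $\mathfrak b$, and Proposition~\ref{G2} closes it (likewise for (5), with $K[T^{1/p^i}]$ in place of $K[T]$). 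So the paper's argument is far shorter than what you propose. That said, the descent claim on which it rests --- that every finitely generated ideal of $B[T]$ is extended from $K[T]$ --- is not obvious as stated (consider the principal ideal generated by $\zeta-T$), so your instinct that (4) and (5) need more care than (1)--(3) is not unreasonable.
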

The above mentioned three results form the backbone of the paper. The rest of the results can then be obtained using standard results mentioned in Chapter 7 and 8 of \cite{bosch2014lectures}.

Furthermore, the following Proposition \ref{7.3/8} , Corollary \ref{coro13} and Proposition \ref{7.4/2} are proved, along with other supporting results.
\begin{proposition*}
 Let $A$ be a $R$ algebra of topologically eka type and $M$ a finite $A$ module. Then $M$ is $I$ adically complete and separated.
\end{proposition*}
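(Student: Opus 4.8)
The plan is to follow the proof of the analogous classical statement (\cite[§7.3]{bosch2014lectures}), systematically replacing Noetherianity by coherence of the free eka algebras (Corollary~\ref{coherent2}), by admissibility (Lemma~\ref{admissible1}), and by the flatness statements of Lemma~\ref{G1}. I read ``finite module'' in the sense natural for coherent rings, i.e.\ finitely presented, and remark at the end on the merely finitely generated case.

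\textbf{Step 1: reduction to a free eka algebra.} Since $A$ is of topologically finite eka type I fix a continuous surjection $A_0:=R\lr{X_1,\ldots,X_n}_\infty\twoheadrightarrow A$ and set $I_0:=\id{a}A_0$, so the ideal of definition $I$ of $A$ is the image of $I_0$. A finite $A$-module $M$ is, by restriction of scalars, a finite $A_0$-module, and $I^kM=I_0^kM$ for every $k$ because $I=I_0A$; hence the $I$-adic and $I_0$-adic topologies on $M$ coincide, and it is enough to treat $A=A_0$, $I=I_0$.

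\textbf{Step 2: free modules and a closedness criterion.} By Lemma~\ref{admissible1} the ring $A_0$ is admissible, hence $I_0$-adically complete and separated with $I_0$ finitely generated, so $A_0$ is metrizable; consequently every finite free module $A_0^m$ is $I_0$-adically complete and separated, its $I_0$-adic topology being the product topology. Choose a presentation $A_0^\ell\to A_0^m\xra{\pi}M\to 0$ and put $N:=\kr\pi=\Ima(A_0^\ell\to A_0^m)$, a finitely generated submodule. Since $I_0^kM=\pi(I_0^kA_0^m)$, the $I_0$-adic topology on $M$ is the quotient topology, so $M$ is $I_0$-adically separated precisely when $N$ is closed in $A_0^m$; and $A_0^m$ being complete and metrizable, $M$ is then complete as well (a quotient of a complete metrizable module by a closed submodule is complete). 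Thus it remains to show that every finitely generated submodule $N\subseteq A_0^m$ is closed for the $I_0$-adic topology.

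\textbf{Step 3: closedness of finitely generated submodules --- the main obstacle.} Imitating the Noetherian argument, the key is an Artin--Rees estimate
\[ N\cap I_0^kA_0^m\ \subseteq\ I_0^{k-c}N\qquad (k\ge c) \]
for a constant $c=c(N)$. Granting it, closedness is immediate: if $x\in\bigcap_k(N+I_0^kA_0^m)$ I write $x=n_k+a_k$ with $n_k\in N$, $a_k\in I_0^kA_0^m$; then $n_{k+1}-n_k\in N\cap I_0^kA_0^m\subseteq I_0^{k-c}N$, so, fixing generators $g_1,\ldots,g_s$ of $N$, the telescoping expression for $n_k$ has coefficients in $I_0^{k-c}$ and therefore converges in the complete ring $A_0$ to some $\nu\in N$; since $a_k\to 0$ and $A_0^m$ is separated, $x=\nu\in N$. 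To prove the estimate I would descend --- using the flatness of $R[T^{1/p^\infty}]\to R\lr{T^{1/p^\infty}}$ (Lemma~\ref{G1}) and the coherence of $R[T^{1/p^\infty}]$ (Proposition~\ref{coherent1}) --- to the corresponding statement over $R[T^{1/p^\infty}]$, which is the increasing union of the Noetherian rings $A[a^{1/p^i}][T^{1/p^j}]$, on which the classical Artin--Rees lemma holds; one then transports the estimate back along the flat map $R[T^{1/p^\infty}]\to A_0$, and the several-variable case follows by induction on the number of variables. The delicate point, and the real heart of the proof, is to control the constant $c$ uniformly as one moves through this filtration and to verify that flat base change preserves the estimate --- exactly the role the flatness results of Lemma~\ref{G1} are designed to play.

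\textbf{Conclusion and the finitely generated case.} Once all finitely presented $A$-modules are shown to be $I$-adically complete and separated, the Proposition follows. If $M$ is only finitely generated the kernel $N$ above need not be finitely generated, and the argument then requires the stronger statement that \emph{every} $A_0$-submodule of $A_0^m$ is $I_0$-adically closed; establishing this for the eka rings (presumably by approximating an arbitrary submodule by its finitely generated submodules and exploiting completeness of $A_0^m$) is the remaining, and I expect most delicate, issue.
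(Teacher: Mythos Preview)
Your overall strategy coincides with the paper's: both follow Bosch's classical argument, reducing along a surjection from the free eka algebra $A_0=R\lr{X_1,\dots,X_n}_\infty$ and identifying an Artin--Rees type comparison of topologies on submodules of finite $A_0$-modules as the decisive input. Where you diverge is in Step~3. The paper does not attempt your descent-along-flatness argument via Lemma~\ref{G1} and Noetherian approximation; instead it simply invokes its Lemma~\ref{7.3/7} (stated and proved immediately before this Proposition), whose point is that when the ideal of definition is \emph{principal}, $I=(\pi)$, the displayed inclusion
\[
(\pi^{m+n}M)\cap N\ \subset\ \pi^m N\ \subset\ (\pi^m M)\cap N
\]
already gives the required comparison of the $(\pi)$-adic topology on $N$ with the subspace topology, with no uniformity-of-constants issue to manage across a direct system. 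So the ``main obstacle'' you flag in Step~3 is, in the paper's framework, handled by a one-line observation exploiting principality of $I$, not by flatness or coherence. Once Lemma~\ref{7.3/7} is in hand, the paper just says ``follows from Proposition~8 of \cite[p.~165]{bosch2014lectures}, replacing $R\lr{\zeta}$ by $R\lr{T}_\infty$''---exactly your Steps~1--2.

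Two further remarks. First, the paper does not distinguish finitely generated from finitely presented modules here: its Lemma~\ref{7.3/7} is stated for an arbitrary submodule $N\subset M$ of a finite $A$-module, so your closing paragraph on the merely finitely generated case is, in the paper's reading, a non-issue. Second, your proposed route through Lemma~\ref{G1} is circular in this paper's logical order: Lemma~\ref{G1} and the surrounding flatness results (Proposition~\ref{G2}, Gabber's Lemma~\ref{8.2/2}) are proved \emph{after} Proposition~\ref{7.3/8} and in fact rely on it (via Lemma~\ref{7.3/14} and Proposition~\ref{7.4.11}, which need the conclusion of Lemma~\ref{7.3/7}). So even if your Step~3 could be completed, it would not be available at this point in the paper.
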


\begin{corollary*}
Let $A$ be an $R$ algebra that is $I$ adically complete and separated, and let $f_1,\ldots,f_r\in A$ generate the unit ideal. Then the following are equivalent
\begin{enumerate}
\item $A$ is of topologically finite eka presentation (resp. admissible).
\item $A\lr{f_i^{-1}}$ is of topologically finite eka presentation (resp. admissible).
\end{enumerate}
\end{corollary*}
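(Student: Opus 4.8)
The plan is to reduce the global statement about $A$ to local assertions on the standard rational localizations $A\lr{f_i^{-1}}$, using that the $f_i$ generate the unit ideal so that $\spc A$ is covered by the affine opens $\spc A\lr{f_i^{-1}}$, and then glue. The direction (1)$\Rightarrow$(2) is the routine half: each $A\lr{f_i^{-1}}$ is obtained from $A$ by the base change $A \to A\lr{f_i^{-1}} = A\lr{T}_\infty/(f_iT-1)$ (in the eka-power-series sense), and this construction manifestly preserves being of topologically finite eka presentation (resp.\ admissible) — one just adjoins one eka variable and one relation, and invokes Lemma~\ref{admissible1} to see that completed eka-power-series over an admissible ring stay admissible, so the localization of an admissible $A$ is admissible. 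I would spell this out first, since it sets up the notation for the harder converse.

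For (2)$\Rightarrow$(1), the strategy is the classical ``finiteness is local on the base'' argument adapted to the eka setting. Assume each $A_i := A\lr{f_i^{-1}}$ is of topologically finite eka presentation over $R$. First I would handle the weaker ``finite eka type'' conclusion: choose finitely many elements of $A$ whose images generate each $A_i$ as an eka algebra over $R$ (possible after clearing denominators in $f_i$, since $1 \in (f_1,\dots,f_r)$), together with the $f_i$ themselves and a partition-of-unity relation $\sum g_i f_i = 1$; this finite set generates $A$ topologically over $R$, because an element of $A$ can be reconstructed from its images in the $A_i$ via the standard Čech/patching exact sequence
\begin{equation}
0 \to A \to \prod_i A_i \to \prod_{i,j} A_{ij},
\end{equation}
using $I$-adic completeness and separatedness of $A$ — which is exactly the content of the Proposition preceding this Corollary (finite modules over eka-type algebras are complete and separated), applied once we know $A$ is of finite eka type, or applied to a suitable approximation to bootstrap. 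Then, having a surjection $R\lr{X_1,\dots,X_N}_\infty \twoheadrightarrow A$, I would show its kernel $\id{b}$ is finitely generated: localize to get that each $\id{b}A_i$ is finitely generated (topologically finite eka presentation of $A_i$), and use coherence of the eka-power-series rings — Proposition~\ref{coherent1} and Corollary~\ref{coherent2} — to promote ``locally finitely generated'' to ``finitely generated'' for the kernel ideal, again gluing via the Čech sequence. The admissibility version then follows by combining the topologically-finite-eka-presentation case with Lemma~\ref{admissible1} and the fact that $A$ being admissible can be checked after the faithfully flat (by Lemma~\ref{G1}) base changes to the $A_i$, together with $I$-adic completeness from the Proposition.

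The main obstacle I anticipate is the gluing step in the converse: in the non-Noetherian eka setting one cannot blithely quote the Noetherian ``locally finite $\Rightarrow$ globally finite'' lemma, so I must genuinely use coherence of $R\lr{X}_\infty$ (Corollary~\ref{coherent2}) to ensure that the kernel of $R\lr{X}_\infty \to A$ is a coherent — hence finitely generated once finitely generated locally — module, and I must check that the localization maps $A \to A_i$ and $R\lr{X}_\infty \to R\lr{X}_\infty\lr{f_i^{-1}}$ are flat (Lemma~\ref{G1}) so that kernels base-change correctly and the Čech complex is exact. A secondary subtlety is purely bookkeeping: the eka localizations $A\lr{f_i^{-1}}$ must be the \emph{right} ones (adjoining $f_i^{-1}$ together with all its $p$-power roots), so that $\spc A$ is really covered and so that ``topologically finite eka type'' is stable under them; I would isolate this compatibility as a preliminary remark. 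Once these flatness-and-coherence inputs are in place, the rest is the standard argument transcribed from \cite[Ch.~7--8]{bosch2014lectures}, as the paper itself signposts.
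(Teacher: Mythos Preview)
Your approach diverges from the paper's and carries a genuine circularity. The paper does not glue at the completed level at all: it invokes Proposition~\ref{7.3/10}, which reduces ``$A$ is of topologically finite eka presentation'' to the purely algebraic statement that each discrete quotient $A_n=A/I^{n+1}A$ is of finite presentation over $R_n$. Since $(A\lr{f_i^{-1}})_n=A_n[\bar f_i^{-1}]$ is an ordinary Zariski localization and the $\bar f_i$ still generate the unit ideal in $A_n$, the direction (2)$\Rightarrow$(1) becomes classical faithfully flat descent of finite presentation along $A_n\to\prod_i A_n[\bar f_i^{-1}]$, level by level; Corollary~\ref{7.3/12} supplies the faithful flatness (for (1)$\Rightarrow$(2) and the admissible clause). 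No coherence of $R\lr{X}_\infty$ and no \v{C}ech patching at the completed level is needed.

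Your route, by contrast, needs flatness of $A\to A\lr{f_i^{-1}}$ and $I$-adic completeness of finite $A$-modules to run the \v{C}ech gluing, but in this paper both facts (Corollary~\ref{7.3/12} and Proposition~\ref{7.3/8}) are stated only \emph{after} one already knows $A$ is of topologically finite eka presentation --- precisely what you are trying to prove. Lemma~\ref{G1} does not help here: it concerns $R[T^{1/p^\infty}]\to R\lr{T^{1/p^\infty}}$, not localization maps of $A$. There is also a notational slip: $A\lr{f_i^{-1}}$ in this paper is the $I$-adic completion of the ordinary localization, i.e.\ $A\lr{T}/(1-f_iT)$ with a \emph{single} non-eka variable, not $A\lr{T}_\infty/(f_iT-1)$; your ``preliminary remark'' about adjoining $p$-power roots of $f_i^{-1}$ is therefore off target, and with it the (1)$\Rightarrow$(2) step as you wrote it. Passing through the discrete quotients $A_n$ as the paper does sidesteps all of these issues at once.
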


\begin{proposition*}
Let $A$ be an $R$ algebra that is $I$ adically complete and separated, and let $X=\spf A$ be the associated formal $R$ scheme. Then the following are equivalent
\begin{enumerate}
\item $X$ is locally of topologically finite eka presentation (resp. admissible).
\item $A$ is topologically finite eka presentation (resp. admissible) as $R$ algebra.
\end{enumerate}
\end{proposition*}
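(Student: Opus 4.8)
The plan is to follow the template of \cite[Prop.~7.4/2]{bosch2014lectures} and reduce the global assertion to the affine-local Corollary~\ref{coro13} by a quasi-compactness argument. The implication (2)~$\Rightarrow$~(1) is immediate: if $A$ is of topologically finite eka presentation (resp. admissible) over $R$, then the single affine open $\spf A$ already exhibits $X$ as locally of the required type, so there is nothing to do.

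For (1)~$\Rightarrow$~(2) I would first record that the underlying space of $X=\spf A$ is homeomorphic to $\spc(A/I)$ and hence quasi-compact. By hypothesis $X$ admits an open covering by affine formal subschemes $U_i=\spf A_i$ with each $A_i$ of topologically finite eka presentation (resp. admissible) over $R$, and quasi-compactness lets me assume this covering finite. The next step is to refine it to a covering by basic opens of $X$: each point of $U_i$ lies in some $X_f$ with $X_f\subseteq U_i$, and since $\spf$ is compatible with rational localization, $X_f$ is the basic open of $U_i$ attached to the image of $f$, i.e. $X_f=\spf\bigl(A_i\lr{f^{-1}}\bigr)$ with $A_i\lr{f^{-1}}$ again of topologically finite eka presentation (resp. admissible) over $R$. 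Shrinking and using quasi-compactness once more, I obtain finitely many $f_1,\dots,f_r\in A$ with $X=\bigcup_{j=1}^{r}X_{f_j}$ and every $A\lr{f_j^{-1}}$ of topologically finite eka presentation (resp. admissible) over $R$.

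To finish, I would observe that because the $X_{f_j}$ cover $X\cong\spc(A/I)$, the classes $\bar f_1,\dots,\bar f_r$ generate the unit ideal of $A/I$, i.e. $(f_1,\dots,f_r)+I=A$; writing $1=\sum_j a_jf_j+\varepsilon$ with $\varepsilon\in I$ and using that $1-\varepsilon$ is a unit (as $A$ is $I$-adically complete and the elements of $I$ are topologically nilpotent) gives $(f_1,\dots,f_r)=A$. Since $A$ is $I$-adically complete and separated, Corollary~\ref{coro13} then applies verbatim and yields that $A$ is of topologically finite eka presentation (resp. admissible) as an $R$-algebra, the admissible case being absorbed into the ``resp.'' of that corollary.

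The hard part will be the refinement step, specifically the claim that the class ``topologically finite eka presentation over $R$'' is stable under the localization-completion operation $(-)\lr{f^{-1}}$, so that $A_i\lr{f^{-1}}$ stays in the class and a basic open of $X$ lying in $U_i$ is genuinely a basic open of $U_i$. This is precisely where the coherence results Proposition~\ref{coherent1} and Corollary~\ref{coherent2}, the flatness statements of Lemma~\ref{G1}, and the finite-module completeness Proposition~\ref{7.3/8} are meant to be used; once that stability is in hand, the remainder is the standard quasi-compact gluing argument together with the elementary unit-ideal computation above.
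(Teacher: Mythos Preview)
Your proposal is correct and follows exactly the route the paper intends: the paper's own proof is the single line ``Follows from Corollary~\ref{coro13}'', and you have spelled out the standard reduction (quasi-compactness of $\spf A$, refinement to a finite cover by basic opens $X_{f_j}$, the unit-ideal argument via $I$-adic completeness) that makes that citation work. Your only overcaution is in the final paragraph: the stability of ``topologically finite eka presentation'' under $(-)\lr{f^{-1}}$ is immediate from $A_i\lr{f^{-1}}\cong A_i\lr{T}/(1-fT)$ and does not require the heavier machinery you list.
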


In the section \ref{coherence1} the coherent properties are discussed, and the following Corollary \ref{8rem4} is shown.

\begin{corollary*}
Let $\curly{M}$ be an $\Os_\id{X}$ module, where $\id{X}$ is a formal $R$ scheme of topologically finite eka presentation, then the following are equivalent
\begin{enumerate}
\item $\curly{M}$ is coherent.
\item $\curly{M}$ is of finite presentation.
\item $\curly{M}{\vert_{X_i}}$ is associated to coherent $\Os_{X_i}$ module, where $(X_i)_{i\in J}$ is a covering of $\id{X}$.
\end{enumerate}

\end{corollary*}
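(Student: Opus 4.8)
The plan is to follow the classical template for coherent modules on formal schemes (cf.\ Chapter~8 of \cite{bosch2014lectures}), all of which rests on one structural input: the sheaf $\Os_\id{X}$ is a \emph{coherent} sheaf of rings whenever $\id{X}$ is a formal $R$ scheme of topologically finite eka presentation. Granting this single fact, the three implications become formal, and the remaining work is to isolate where the earlier results of the paper are consumed.

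The first and main step is to establish that $\Os_\id{X}$ is coherent, and here I would reduce at once to the affine case. If $\id{X}=\spf A$ with $A$ of topologically finite eka presentation, then $A$ is a quotient of some $R\lr{T_1,\ldots,T_r}_\infty$ by a finitely generated ideal; by the multivariate analogue of Corollary~\ref{coherent2} the ring $R\lr{T_1,\ldots,T_r}_\infty$ is coherent, and a quotient of a coherent ring by a finitely generated ideal is again coherent, so $A$ is coherent. To upgrade this to coherence of the \emph{sheaf} $\Os_\id{X}$ one must control the standard affine cover of $\spf A$ by rational subsets $\spf A\lr{f_i^{-1}}$: each $A\lr{f_i^{-1}}$ must again be coherent, must remain of topologically finite eka presentation, and the restriction maps must be flat. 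The first two are supplied by Corollary~\ref{coro13} and Proposition~\ref{7.4/2}, and the flatness is exactly what Lemma~\ref{G1} provides; together with the completeness of finite modules (Proposition~\ref{7.3/8}) this makes the associated-module functor $M\mapsto\widetilde M$ on each affine piece exact and compatible with further localization. One then reads off that $\Os_\id{X}$ is of finite type over itself and that the kernel of every morphism $\Os_\id{X}^{\,n}\to\Os_\id{X}^{\,m}$ is of finite type, i.e.\ $\Os_\id{X}$ is coherent.

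With coherence of $\Os_\id{X}$ in hand the equivalences are routine. \textbf{(1)~$\Rightarrow$~(2)} is the general fact that a coherent module over any ringed space is of finite presentation. \textbf{(2)~$\Rightarrow$~(1)} is the equally general fact that over a \emph{coherent} ringed space any module of finite presentation is coherent: choosing a local presentation $\Os_\id{X}^{\,m}\to\Os_\id{X}^{\,n}\to\curly{M}\to 0$, the kernel of $\Os_\id{X}^{\,n}\to\curly{M}$ is the image of $\Os_\id{X}^{\,m}$, hence of finite type, and the finite-type condition on kernels of arbitrary maps $\Os_\id{X}^{\,p}\to\curly{M}$ follows by diagram chasing from the coherence of $\Os_\id{X}$ itself. \textbf{(1)~$\Leftrightarrow$~(3)}: coherence and finite presentation are both local conditions, so they may be tested on the cover $(X_i)_{i\in J}$; on each $X_i=\spf A_i$ the functor $M\mapsto\widetilde M$ is an equivalence between finite (equivalently, since $A_i$ is coherent, finitely presented, equivalently coherent) $A_i$ modules and coherent $\Os_{X_i}$ modules, and it commutes with the rational localizations used to glue. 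Hence $\curly{M}$ is coherent precisely when each $\curly{M}\vert_{X_i}$ is associated to a coherent $\Os_{X_i}$ module, and in that case $\curly{M}$ is automatically of finite presentation by the first two implications.

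The hard part is Step~1, and more precisely the passage from the ring-theoretic coherence of $A$ to the coherence of the structure \emph{sheaf}: this is where one cannot avoid analysing the rational localizations $A\lr{f^{-1}}$, needing simultaneously that they stay coherent, stay of topologically finite eka presentation, and are flat over $A$, so that neither sheafification nor the gluing of the local presentations destroys exactness. Once the localization behaviour recorded in Corollary~\ref{coro13}, Proposition~\ref{7.4/2} and the flatness of Lemma~\ref{G1} is invoked, everything that remains is standard sheaf-theoretic bookkeeping.
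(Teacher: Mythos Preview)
Your approach is essentially the paper's own, only spelled out in more detail: both rest on the single structural input that $A$ (and hence $\Os_\id{X}$) is coherent---the paper cites Lemma~\ref{7.3lemma6}, you cite Corollary~\ref{coherent2}, which is the same content---and then defer the remaining bookkeeping to Bosch's Remark~8 on pp.~177--178. The paper's proof is three sentences; yours unpacks what those sentences mean.

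One correction, however: the flatness of the restriction maps $A\to A\lr{f^{-1}}$ is \emph{not} what Lemma~\ref{G1} provides. That lemma concerns the passage $R[T^{1/p^\infty}]\to R\lr{T^{1/p^\infty}}$ from polynomial rings to their completions, which is a different statement entirely. The flatness you actually need is Corollary~\ref{7.3/12} (or, packaged at the sheaf level, the exactness recorded in Corollary~\ref{8coro2}). With that substitution your argument goes through.
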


Admissible formal blow ups are discussed in section \ref{blowup1a}. Finally, the following proposition \ref{8.2/7} is shown, which answers the question raised by Peter Scholze about perfectoid spaces and their description as Raynaud Blow Up (at Arizona Winter School 2017) in the setting of $\eka^d$ rings.

\begin{proposition*}
Let $A$ be topologically finite eka presentation and $\id{a}=\lr{f_0,\ldots,f_r}\subset A$ a coherent open ideal. Suppose $\id{X}=\spf A$ is the admissible formal affine $R$ scheme with coherent open ideal $\mathscr{A}=\id{a}^\Delta$ and $\id{X}_\mathscr{A}$ is formal blowing up of $\mathscr{A}$ on $\id{X}$. Then the following hold
\begin{enumerate}
\item The ideal $\scr{A}$ is a line bundle.
\item Let the ideal $\scr{A}$ be generated by $f_i, i=0,\ldots, r$ and $U_i$ be the corresponding locus in $\id{X}_\scr{A}$, then $\{U_i\}$ defines an open affine covering of $\id{X}_\scr{A}$.
\item With $C_i$ as given below, denote $A_i=C_i/(I\mathrm{-torsion})_{C_i}$ then $U_i=\spf A_i$ and the $I$ torsion of $C_i$ is same as the $f_i$ torsion.
\begin{equation}
C_i=A\lr{\frac{f_j}{f_i}}=\frac{A\lr{\xi_j}}{(f_i\xi_j-f_j)}\quad\text{where}\quad j\neq i
\end{equation}

\end{enumerate}

\end{proposition*}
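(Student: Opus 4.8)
The plan is to follow the proof of the Noetherian prototype in \cite{bosch2014lectures}, 8.2/7, feeding in the coherence and flatness results proved above in place of the Noetherian hypotheses. The first point is that the formal blow-up $\id{X}_\scr{A}$ is the $I$-adic completion of the algebraic blow-up $\wbr{X}:=\prj\bigl(\bigoplus_{n\geq 0}\id{a}^n\bigr)\ra\spc A$ along its closed fibre $V(I)$: this is legitimate because $\id{a}$ is coherent, hence finitely generated, and $A$ is $I$-adically complete and separated, so the inductive system $\varinjlim_\lambda\prj\bigl(\bigoplus_n\id{a}^n\otimes_A A/I^{\lambda+1}\bigr)$ defining $\id{X}_\scr{A}$ agrees with $\widehat{\wbr{X}}$. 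Hence all three assertions are local on $\wbr{X}$ and may be checked on the standard affine charts and their completions. For (2): the Rees algebra $\bigoplus_n\id{a}^n$ is generated in degree one by $f_0,\ldots,f_r$, so $\wbr{X}=\bigcup_{i}D_+(f_i)$ in the usual way, and after completion the $U_i:=\widehat{D_+(f_i)}$ form an open affine covering of $\id{X}_\scr{A}$; since $\id{a}\scr{O}_{\wbr{X}}$ is the twisting sheaf with $f_i$ as the corresponding section, $D_+(f_i)$, hence $U_i$, is exactly the locus where $\scr{A}\scr{O}_{\id{X}_\scr{A}}$ is generated by the image of $f_i$.

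For (3): on $D_+(f_i)$ the classical blow-up computation gives a presentation $\scr{O}_{\wbr{X}}(D_+(f_i))=\bigl(A[\xi_j:j\neq i]/(f_i\xi_j-f_j)\bigr)/(f_i\text{-torsion})$, the map $\xi_j\mapsto f_j/f_i$ realizing the degree-zero part of the localized Rees algebra. Taking $I$-adic completions, and using that finite modules over $R$-algebras of topologically finite eka type are $I$-adically complete (Proposition \ref{7.3/8}) together with the flatness of $A[\xi_j]\ra A\lr{\xi_j}$ furnished by Lemma \ref{G1} and the coherence of $A\lr{\xi_j}$ from Proposition \ref{coherent1} and Corollary \ref{coherent2} — so that $I$-adic completion stays exact on the sequence $0\to(f_i\text{-torsion})\to A[\xi_j]/(f_i\xi_j-f_j)\to\scr{O}_{\wbr{X}}(D_+(f_i))\to 0$ — one obtains $\scr{O}_{U_i}=C_i/(f_i\text{-torsion of }C_i)$ with $C_i=A\lr{\xi_j}/(f_i\xi_j-f_j)$. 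It then remains to match the $f_i$-torsion and the $I$-torsion of $C_i$. The relations $f_j=f_i\xi_j$ give $\id{a}C_i=f_iC_i$, hence $\id{a}^mC_i=f_i^mC_i$ for all $m$; as $\id{a}$ is open, $I^k\subseteq\id{a}$ for some $k$, so $I^{km}C_i\subseteq f_i^mC_i$, and therefore every $f_i^m$-torsion element is $I^{km}$-torsion. Conversely, $\id{a}C_i=f_iC_i$ makes $I$ the unit ideal in the localization $(C_i)_{f_i}$, so any element of $C_i$ killed by a power of $I$ vanishes in $(C_i)_{f_i}$, i.e.\ is $f_i$-torsion. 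Thus $A_i:=C_i/(I\text{-torsion of }C_i)$ is the $f_i$-torsion-free quotient and $U_i=\spf A_i$.

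For (1): by the universal property of blowing up, $\id{a}\scr{O}_{\wbr{X}}$ is invertible, hence so is its completion $\scr{A}\scr{O}_{\id{X}_\scr{A}}$, and being coherent it is a line bundle; concretely, part (3) exhibits $\scr{A}|_{U_i}=f_iA_i$, and since $A_i$ is $f_i$-torsion free, multiplication by $f_i$ is an isomorphism $A_i\xra{\sim}f_iA_i$, so $\scr{A}|_{U_i}\cong\scr{O}_{U_i}$ is free of rank one. The step I expect to be the main obstacle is the completion in (3): outside the Noetherian world $I$-adic completion is not exact in general, so the whole reduction rests on $A\lr{\xi_j}$ being coherent and $A[\xi_j]\ra A\lr{\xi_j}$ being flat (Proposition \ref{coherent1}, Corollary \ref{coherent2} and the relevant parts of Lemma \ref{G1}), which is precisely what keeps the torsion sequence exact after completion; once that is secured, the remainder is the usual Proj-chart bookkeeping of a blow-up carried over from \cite{bosch2014lectures}.
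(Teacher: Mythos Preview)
Your overall architecture mirrors the paper's proof exactly: pass to the scheme-theoretic blow-up, take charts $D_+(f_i)$, complete, and use flatness of completion to transport the torsion computation. Two points need correction.

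First, a labeling issue: the flatness of $A[\xi_j]\ra A\lr{\xi_j}$ for a general $A$ of topologically finite eka presentation is not what Lemma~\ref{G1} provides. That lemma treats only the model cases ($R$ Noetherian, $R$ an $\eka^p$ ring, or $B=K\lr{\zeta^{1/p^\infty}}$). What you actually need is Lemma~\ref{8.2/2} (the modified Gabber lemma), which bootstraps from Lemma~\ref{G1} by writing $A$ as a quotient of the full $R\lr{\zeta^{1/p^\infty}}$ and base-changing. Likewise, the coherence input should be Lemma~\ref{7.3lemma6}, not Proposition~\ref{coherent1}/Corollary~\ref{coherent2} directly, since $A\lr{\xi_j}$ involves ordinary variables over a quotient, not $R\lr{T^{1/p^\infty}}$ itself.

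Second, and more substantively, your converse inclusion $(I\text{-torsion})_{C_i}\subset(f_i\text{-torsion})_{C_i}$ does not go through as written. You assert that ``$\id{a}C_i=f_iC_i$ makes $I$ the unit ideal in $(C_i)_{f_i}$'', but openness of $\id{a}$ gives $I^k\subset\id{a}$, not $\id{a}\subset I$; so while $\id{a}$ becomes the unit ideal after inverting $f_i$, there is no reason $I$ does. The paper closes this gap by invoking the admissibility hypothesis on $\id{X}$: since $A$ has no $I$-torsion, neither does the Rees algebra $S=\bigoplus_{d}\id{a}^d\subset\bigoplus_d A$, nor its homogeneous localization $S_{(f_i)}\cong B_i/(f_i\text{-torsion})$; hence any $I$-torsion in $B_i$ already lies in the $f_i$-torsion, and flatness of $C_i$ over $B_i$ (Gabber) transports this to $C_i$. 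You should insert this admissibility step; without it the equality of torsions can genuinely fail.
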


\subsection{Acknowledgement} I am grateful to Matthew Morrow and Peter Scholze for pointing out errors in the earlier draft. These errors are now fixed and any remaining errors are my own.

\section{Linear Topology}
As is customary, the story begins by recalling a few facts from  \cite[pp. 60 \S 7]{PMIHES_1960__4__5_0}(also denoted as EGA0 and EGA1).
\begin{mdef}
 In the linearly topologised ring $A$, we say that an ideal $I$ is an ideal of definition if $I$ is open and if, for all neighborhoods $V$ of $0$, there is an integer $n>0$ such that $I^n\subset V$ (by abuse of language, we say that the sequence ($I^n$) tends to 0). We say that linearly topologised ring $A$ is preadmissible if there exists in $A$ an ideal of definition; we say that A is admissible if it is preadmissible and if it is also separated and complete.
 \end{mdef}

The linear topology allows to transfer neighborhoods of one point to another (linearly), thus only neighborhoods of zero (or some other fixed element) are required to define neighborhood of any other element.
\subsubsection*{Examples}
\begin{enumerate}
\item $\ds{Z}_p$ has infinitely many ideals of definition $p^i\ds{Z}_p, i\in\ds{Z}_{>0}$ and the maximal among them is $p\ds{Z}_p$ (using $p$ adic convergence to zero or algebraically $p$ adic filtration).
\item $\ds{Z}_p[p^{1/p^\infty}]$ contains infinitely many ideals of definition $p^i\ds{Z}_p, i\in \ds{Z}[1/p]_{>0}$. The maximal ideal is $I=\cup_i p^{1/p^i}$ coming from the chain $p\subset p^{1/p}\subset p^{1/p^2}\subset \cdots$. But, the maximal ideal is not an ideal of definition, since $I^n=I$ and hence does not tend to zero.
\item $\ds{Z}_p[\zeta^{1/p^\infty}]$ contains infinitely many ideals of definition $p^i\ds{Z}_p, i\in \ds{Z}_{>0}$ ($p$ adic convergence) forming the neighborhood of zero. Similar to above $I=\cup_i\zeta^{1/p^i}$, where $\zeta$ is the root of unity, is not an ideal of definition (forming the neighborhood of one), since $I^n=I$ and it does not tend to one.
\end{enumerate}
By virtue of corollary below the topologies are independent of the choice of ideal of definition $I$. For semi local rings it is customary to take ideal of definition of a topological ring $A$ as radical of $A$ denoted as $\rad(A)$. In this tract the focus is on linearly topologised rings with finitely generated ideal of definition.

\begin{corollary}[{Corollaire (7.1.8) \cite[pp. 61]{PMIHES_1960__4__5_0}}]
If a preadmissible ring $A$ is such that, for an ideal of definition $I$, the powers $I^n(n>0)$ form a fundamental system of neighborhoods of $0$, it is the same as the powers $I'^n$ of all idéal of définition $I'$ of $A$.
\end{corollary}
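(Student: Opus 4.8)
The plan is to show directly that the linear topology with fundamental system of neighborhoods of $0$ given by $\{I^n\}_{n>0}$ coincides with the one given by $\{I'^n\}_{n>0}$, by exhibiting cofinal inclusions between the two descending chains of ideals. The only inputs are the definitions recalled above: an ideal of definition is open and its powers tend to $0$ (so they are eventually contained in every neighborhood of $0$), together with the standing hypothesis that the powers of $I$ already form a fundamental system.

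First I would exploit openness of $I'$. Since $\{I^n\}_{n>0}$ is a fundamental system of neighborhoods of $0$ and $I'$ is open, there is an integer $m>0$ with $I^m\subseteq I'$. Raising to the $n$th power and using $(I^m)^n=I^{mn}$ yields $I^{mn}\subseteq I'^n$ for every $n>0$. In particular every $I'^n$ contains a member of the fundamental system $\{I^j\}_{j>0}$, hence each $I'^n$ is itself a neighborhood of $0$. This small step is the only place where the hypothesis on $I$ is genuinely needed, since a priori a power of an open ideal need not be open.

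Next I would run the symmetric argument. The ideal $I$ is open, being an ideal of definition, and $I'$ is an ideal of definition, so the sequence $(I'^n)$ tends to $0$; hence there is an integer $k>0$ with $I'^k\subseteq I$. Raising to the $n$th power gives $I'^{kn}=(I'^k)^n\subseteq I^n$ for all $n>0$, so every member $I^n$ of the first fundamental system contains the ideal $I'^{kn}$.

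Finally I would combine the two. Given an arbitrary neighborhood $V$ of $0$, choose $n>0$ with $I^n\subseteq V$, which is possible because $\{I^j\}_{j>0}$ is a fundamental system; then $I'^{kn}\subseteq I^n\subseteq V$. Together with the previous two paragraphs this shows that $\{I'^n\}_{n>0}$ is a fundamental system of neighborhoods of $0$ and defines the same topology as $\{I^n\}_{n>0}$; in particular the topology on $A$ does not depend on the choice of ideal of definition. The only subtlety in the whole argument is the one flagged above — verifying that each $I'^n$ is a neighborhood of $0$ — and it is disposed of by the inclusion $I^{mn}\subseteq I'^n$; the rest is elementary manipulation of products of ideals.
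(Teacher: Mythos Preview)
Your argument is correct and is essentially the standard proof of this fact. Note, however, that the paper does not actually supply a proof of this corollary: it is merely quoted from EGA (Corollaire 7.1.8) as background, with no argument given in the text. So there is nothing in the paper to compare against beyond the citation; your write-up is the expected two-step cofinality argument one finds in EGA or Bourbaki, and it is fine as stated.
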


\section{Restricted power series}
In [EGA0 \S 7.5] or \cite[pp 212-213]{n1998commutative} the restricted power series are constructed. Let $A$ be a topological ring , linearly topologised, separated and
complete; let ($I_\lambda$) be a fundamental system of neighborhoods of $0$ in $A$ formed by open ideals
(open), so that $A$ is canonically identified with $\varprojlim A/I_\lambda$. For all $\lambda$,
let $B_\lambda= (A/I_\lambda)[T_1, .... T_r]$, where the $T_i$ are the indeterminates ; it is clear that the $B_\lambda$
form a projective systèm of discrete rings. We set $\varprojlim B_\lambda=A\{T_1,\ldots T_n\}$,
and this topological ring  is independent of fundamental systèm
of ideals ($I_\lambda$) considered. More precisely, let $A'$ be a sub ring of the ring of formal series
$A[[T_1,\ldots,T_r]]$ formed by formal
series  $\sum_\alpha c_\alpha T^\alpha$ (with $\alpha=(\alpha_1,\ldots,\alpha_r)\in\ds{N}^r$)
such that $\lim c_\alpha=0$; we
say that these series are the formal series restricted in the $T_i$, with coefficients in $A$. In this tract we re-write $A'=A\lr{T_1,\ldots,T_r}$. There is a topological isomorphism between $A'$ and $\varprojlim_\lambda B_\lambda$ and is shown on [EGA0 \S 7.5, page 70] .

The neighborhoods of $0$ can be defined explicitly in $A'$. For all neighborhoods $V$ of $0$ in $A$, let $V'$ be the set of $x=\sum_\alpha c_\alpha T^\alpha\in A'$ such that $c_\alpha\in V$ for all $\alpha$. The $V'$ form a fundamental system of neighborhoods of $0$ defining on $A'$ a topology of a separated ring.

\begin{rem}\label{7.5.4}
[EGA0 7.5.4].
\begin{enumerate}
\item If $A$ is an admissible, so is $A'=A\lr{T_1,\ldots,T_r}$.
\item Let $A$ be an adic ring, $I$ is an idéal of définition of $A$ such that $I/I^2$ be of finite type
on $A/I$. If we denote $I'=IA',A'$ is then a $I'$ adic ring and $I'/I'^2$ is of finite type on $A'/I'$ . If in addition $A$ is Noetherian, so is $A'$.
\end{enumerate}
\end{rem}

\subsection{Adding $p$th power roots}

\begin{construction}\label{const1a}

Let $R$ be an admissible ring and $\id{a}$ an ideal of definition, consider the following diagram where the first row is just degree $p$ extension ($p$ not necessarily prime), and the second row is its multivariate version.

\begin{equation}{\label{adic2}}
\begin{aligned}
R[X]\ra R[X^{1/p}]\ra R[X^{1/p^2}]\ra &\cdots\ra R[X^{1/p^i}] \ra \cdots \ra \bigcup_{i\geq 0} R[X^{1/p^i}]=\varinjlim_iR[X^{1/p^i}]\\
R[X_1,\ldots,X_n]\ra \cdots\ra R[X_1^{1/p^i},\ldots ,X_n^{1/p^i}]&\ra \cdots \ra  \bigcup_{i\geq 0} R[X_1^{1/p^i},\ldots, X_n^{1/p^i}]=\varinjlim_iR[X_1^{1/p^i},\ldots, X_n^{1/p^i}]
\end{aligned}
\end{equation}
Notice that direct limit $\varinjlim_i$ is simply a union of spaces formed out of injective maps, and the inverse limit $\varprojlim_\lambda$ is taken over the fundamental system of neighborhood formed by $\id{a}$.

In order to ease the flow fix notation as
\begin{equation}
R\lr{X}_\infty=\varprojlim_\lambda\varinjlim_i R[X^{1/p^i}]\quad\text{and}\quad R\lr{X_1,\ldots X_n}_\infty=\varprojlim_\lambda\varinjlim_i R[X_1^{1/p^i},\ldots, X_n^{1/p^i}]\\
\end{equation}

The ring $R\lr{X}_\infty$ is also written as $R\lr{X^{1/p^\infty}}$.

\end{construction}
The elements of $f\in R\lr{X_1,\ldots,X_n}_\infty$ can be explicity described as
\begin{equation}
f=\sum_{i\in\ds{Z}[1/p]_{\geq 0}}a_iT^i\qquad a_i\in R, \quad\abs{a_i}\ra 0 \text{ as }i\ra\infty
\end{equation}
where $T$ denotes the monomial formed by the product of $X_j, j\in\{1,\ldots, n\}$ and $i$ denotes the degree of the monomial. The monomials can be ordered by observing the ordering of rational numbers.
\begin{lemma}\label{admissible1}
Let $R$ be an admissible ring with ideal of definition $\id{a}$, then the ring $A=R\lr{X_1,\ldots,X_r}_\infty$ is admissible.
\end{lemma}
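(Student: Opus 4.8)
The plan is to follow the proof of Remark~\ref{7.5.4}(1), i.e.\ of [EGA0, 7.5.4(1)], the only genuinely new feature being that the exponent monoid of $A$ is $M=(\ds{Z}[1/p]_{\ge 0})^r$ rather than $\ds{N}^r$: a monomial degree $\gamma\in M$ now admits infinitely many decompositions $\gamma=i_1+\dots+i_n$, so the Cauchy products that define multiplication in $A$ become genuinely infinite (though summable) series.

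Fix a fundamental system $(I_\lambda)$ of \emph{open} ideals of $R$; since $R$ is separated we have $\bigcap_\lambda I_\lambda=(0)$, and since $R$ is admissible, for each $\lambda$ there is an $n=n(\lambda)$ with $\id{a}^{\,n}\subseteq I_\lambda$. Unwinding Construction~\ref{const1a}, $A=\varprojlim_\lambda B_\lambda$ with $B_\lambda=\varinjlim_i (R/I_\lambda)[X_1^{1/p^i},\dots,X_r^{1/p^i}]$ discrete, so $A$ is complete; its elements are the restricted series $\sum_{i\in M}a_iT^i$ with $a_i\to 0$, and a fundamental system of open neighbourhoods of $0$ is given by the ideals $I'_\lambda:=\ker(A\to B_\lambda)=\{\sum_{i}a_iT^i : a_i\in I_\lambda\ \forall i\}$. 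It is separated since $\bigcap_\lambda I'_\lambda=\{\sum a_iT^i : a_i\in\bigcap_\lambda I_\lambda\}=(0)$.

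It remains to exhibit an ideal of definition, and I claim $\id{a}'=\{\sum_i a_iT^i\in A : a_i\in\id{a}\ \forall i\}$ works. It is open, being the neighbourhood of $0$ attached to the open ideal $\id{a}$ of $R$. The input needed is the explicit shape of multiplication: the $T^\gamma$-coefficient of $f_1\cdots f_n$, with $f_j=\sum_i a^{(j)}_iT^i$, is $\sum_{i_1+\dots+i_n=\gamma}a^{(1)}_{i_1}\cdots a^{(n)}_{i_n}$, and this family is summable because each $a^{(j)}_i\to 0$ (for a fixed $\lambda$, a summand misses $I_\lambda$ only if $a^{(1)}_{i_1}\notin I_\lambda$, confining $i_1$ to a finite set; then $i_2,\dots,i_{n-1}$ lie in finite sets, and $i_n=\gamma-i_1-\dots-i_{n-1}$ is determined). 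Taking $f_1=g\in A$ arbitrary and $f_2=f\in\id{a}'$: every summand $b_{i_1}a_{i_2}$ lies in the ideal $\id{a}$, which is closed (an open subgroup), so the coefficient lies in $\id{a}$; hence $A\cdot\id{a}'\subseteq\id{a}'$ and $\id{a}'$ is an ideal of $A$. Taking $f_1,\dots,f_n\in\id{a}'$ with $n=n(\lambda)$: every summand lies in $\id{a}^{\,n}\subseteq I_\lambda$, and $I_\lambda$ is closed, so each coefficient of $f_1\cdots f_n$ lies in $I_\lambda$, i.e.\ $f_1\cdots f_n\in I'_\lambda$; finite sums of such products remain in $I'_\lambda$, whence $(\id{a}')^n\subseteq I'_\lambda$. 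As $\lambda$ varies this shows $(\id{a}')^n\to 0$, so $\id{a}'$ is an ideal of definition and $A$ is admissible.

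The one step I expect to require real care is the passage to the infinite exponent monoid: one must first check the Cauchy products are summable (so that $A$ is a topological ring at all), and then, in the last estimate, observe that the Noetherian-style argument ``a finite sum of elements of $\id{a}^{\,n}$ lies in $\id{a}^{\,n}$'' is unavailable, since $\id{a}^{\,n}$ need not be closed; one must instead pass to an open — hence closed — ideal $I_\lambda\supseteq\id{a}^{\,n}$ and use that a summable series of elements of $I_\lambda$ converges into $I_\lambda$. Everything else is the EGA argument unchanged.
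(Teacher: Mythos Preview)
Your proof is correct and follows the same line as the paper's, which simply invokes the EGA0/Bourbaki argument for restricted power series with the exponent monoid replaced by $(\ds{Z}[1/p]_{\ge 0})^r$. Your explicit treatment of the summability of the now-infinite Cauchy products, and the care to pass to an open (hence closed) ideal $I_\lambda\supseteq\id{a}^{\,n}$ rather than to $\id{a}^{\,n}$ itself, are precisely the details the paper leaves implicit.
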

\begin{proof}
The construction above and remark \ref{7.5.4} gives the admissibility. More precisely (following EGA 0,\S 7.5), $A$  is the sub ring of the ring of formal series
$R[[X_1^{1/p^\infty},\ldots,X_r^{1/p^\infty}]]$ formed by formal
series  $\sum_\alpha c_\alpha X^\alpha$ (with $\alpha=(\alpha_1,\ldots,\alpha_r)\in(\ds{N}[1/p])^r$)
such that $\lim c_\alpha=0$; (ordering the monomials by rational degree).
The neighborhoods of $0$ can be defined explicitly in $A$. For all neighborhoods $V$ of $0$ in $R$, let $V'$ be the set of $x=\sum_\alpha c_\alpha X^\alpha\in A$ such that $c_\alpha\in V$ for all $\alpha$. The $V'$ form a fundamental system of neighborhoods of $0$ defining on $A$ a topology of a separated ring.

Another proof can be given by adapting the proof of Proposition 3 from \cite[pp 213]{n1998commutative} by changing the map $\phi_{n_1,\ldots,n_r}^{\id{a}}$ with $(n_1,\ldots,n_r)\in\ds{N}[1/p]$,
\begin{equation}
\phi_{n_1,\ldots,n_r}^{\id{a}}: (A/\id{a})[X_1^{1/p^\infty}, \ldots, X_r^{1/p^\infty}]\ra A/\id{a}
\end{equation}
which maps every polynomial to the coefficient of $X_1^{n_1}\cdots X_r^{n_r}$ in this polynomial. The inverse limit is formed by $\id{a}$ over the neighborhood of zero as in the cited reference.
\end{proof}

\begin{rem}
The above lemma holds for $R=\nfld_K$, where $\nfld_K$ is the valuation ring ( or ring of integers) for the perfectoid field $K$.
\end{rem}
\subsection{Degree as a $\ds{Q}$}
In the spirit of constructions above, the following construction gives power series with rational degree.
\begin{construction}\label{const1b}
Consider the following notations
\begin{equation}
\begin{aligned}
R_1=R[X], R_2=R[X,X^{1/2}],R_3=R[X,X^{1/2}, X^{1/3}],\\
R_4= R[X,X^{1/2}, X^{1/3},X^{1/4}],\ldots, R_i= R[X,X^{1/2}, \ldots,X^{1/i}]
\end{aligned}
\end{equation}

The above rings can be arranged in an increasing order
\begin{equation}
R_1\subset R_2\subset R_3\subset\ldots\subset R_i\subset\ldots \qquad\varinjlim R_i= \bigcup_{i\geq 1} R[X^{1/i}],
\end{equation}
and completed (with respect to ideal of definition) as in the construction \ref{const1a} to get $R\lr{X^{1/i}}, i\in\ds{N}$, in other words $\varprojlim_\lambda\varinjlim_i R_i$. This includes monomial of every rational degree, since the every element of $\ds{Q}_{\geq 0}$ can be written in the form $a/b$ where $a,b\in\ds{N}$.

This construction can be done for the multivariate case in a similar manner. For $n=2$ consider the following
\begin{equation}
\begin{aligned}
&R_1=R[X,Y],\\&R_2=R[X,X^{1/2},Y,Y^{1/2}],\\&R_3=R[X,X^{1/2}, X^{1/3},Y,Y^{1/2}, Y^{1/3}],\\
&R_4= R[X,X^{1/2}, X^{1/3},X^{1/4},Y,Y^{1/2}, Y^{1/3},Y^{1/4}], \\
&\vdots=\qquad\qquad\vdots\\
&R_i= R[X,X^{1/2}, \ldots,X^{1/i},Y,Y^{1/2}, \ldots,Y^{1/i}]\\
&\vdots=\qquad\qquad\vdots.\\
\end{aligned}
\end{equation}
The above can be canonically expanded to the $n$ variable case. Completing the above with respect to ideal of definition gives the series $R\lr{T^i},i\in\ds{Q}$, where $T=\{X_1,\ldots, X_n\}$ and $i$ is the sum of degrees of each $X_j$. The elements can be more formally written as,
\begin{equation}
\sum_ia_iT^i,\qquad i\in\ds{Q}\qquad \abs{a_i}\ra 0,
\end{equation}
and denoted by $\tensor[_{\ds{Q}}]{\rr}{^n}$. The ordering of monomials comes from ordering rationals.
\end{construction}
\begin{rem}
The lemma \ref{admissible1} can be adapted to show admissibility of the ring $\tensor[_{\ds{Q}}]{\rr}{^n}$, by following the exact same reasoning.

\end{rem}

\section{Sheaves}

Let $A$ be an adic ring with ideal of definition $\id{a}$ and is complete and separated. For example, $A=R\lr{X}_\infty$ or $A=R\lr{X_1,\ldots,X_n}_\infty$. It is assumed that the ideal $\id{a}\subset R$ and is finitely generated.

Following [EGA1 10.1 Page 180] or \cite[pp. 158-159]{bosch2014lectures} there is a formal scheme
$\mathfrak{X}=\spf A$  that can be canonically identified with the space $\spc~A/\id{a}$, with $\id{a}$ as an ideal of definition. This gives rise to sheaf, for $f\in A$
\begin{equation}
D(f)\mapsto A\lr{f^{-1}}\quad\text{ and stalk }\quad\curly{O}_x=\varinjlim_{x\in D(f)}A\lr{f^{-1}}
\end{equation}

This sheaf can then be used to compute cohomology of line bundles of the space $\ds{P}^n$ with the same proof as given in \cite{Bedi2018} or \cite{2017arXiv170406820B} for perfectoid spaces. The line bundles $\curly{O}(n)$ now have degree $\ds{Z}[1/p]$ ($p$ not necessarily prime). The cover of the space is given by the hyperplanes $\{X_i=0\}_i$ and setting $f=X_i$ gives the required sheaf for computing with \v{C}ech Cohomology. An extension with rational degree $\ds{Q}$ in stead of $\ds{Z}[1/p]$ is given below.

\subsection{Line Bundle $\curly{O}(d),d\in\ds{Q}$}\label{ratBun}

This brings us to the case of $\curly{O}(d)$ the line bundle with $d\in \ds{Q}$. For example, the global sections of $H^0(\ds{P}^1,\curly{O}_{\ds{P_1}}(2))$ are generated by $x^2,x^{a_1/b_1}y^{a_2/b_2},y^2$ where $(a_1/b_1)+(a_2/b_2)=2$ and $a_i/b_i\in\ds{Q}$, which gives an infinite dimensional space. The affine pieces are series are $\tensor[_{\ds{Q}}]{\rr}{^n}$ glued together in a natural way to get $\perfq$.

\begin{theorem}
\begin{enumerate}
\item $H^0(\perfq,\curly{O}_{\perfq}(m))$ is a free module of infinite rank.
\item $H^n(\perfq,\curly{O}_{\perfq}(-m))$ for $m>n $ is a free module of infinite rank.
\item $H^i(\perfq,\curly{O}_{\perfq})=0$ if $0<i<n$
\end{enumerate}
\end{theorem}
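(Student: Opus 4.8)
The plan is to reduce everything to a \v{C}ech computation on the standard affine cover of $\perfq$ by the opens $U_i = \{X_i \neq 0\}$, $i = 0, \ldots, n$, exactly mirroring the classical computation of $H^\ast(\ds{P}^n, \Os(d))$ over a Noetherian base, but now allowing the grading group to be $\ds{Q}$ instead of $\ds{Z}$. First I would fix the affine pieces: each $U_i$ is $\spf$ of a ring of the shape $\tensor[_{\ds{Q}}]{\rr}{^n}$ in the $n$ dehomogenized variables $X_j/X_i$ (with rational exponents), and the line bundle $\Os(m)$ is the one glued from the transition functions $(X_i/X_j)^m$, which makes sense precisely because $m \in \ds{Q}$ and our rings contain all rational power roots. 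The key structural input from the earlier part of the paper is Lemma \ref{admissible1} (and the remark extending it to $\tensor[_{\ds{Q}}]{\rr}{^n}$): this guarantees the rings on overlaps are admissible, separated and complete, so the \v{C}ech complex is a complex of honest complete topological modules and taking cohomology commutes with the relevant limits.

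Next I would write down the \v{C}ech complex $\check{C}^\bullet(\{U_i\}, \Os(m))$ explicitly as a complex of free modules over $R$ (or $\nfld_K$), graded by $\ds{Q}$ rather than $\ds{Z}$: a monomial $X_0^{a_0}\cdots X_n^{a_n}$ with $a_j \in \ds{Q}$, $\sum a_j = m$, lies in the term indexed by a subset $S$ precisely when $a_j \geq 0$ for all $j \notin S$ — the same combinatorial rule as the classical Bott/Serre computation. The cohomology of this complex is then computed degree-by-degree in $\ds{Q}$, and for each fixed total degree the relevant piece of the complex is a finite Koszul-type complex on a sub-simplex, whose cohomology is the classical answer. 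Assembling: $H^0$ is the span of monomials with all exponents $\geq 0$ summing to $m$ (for $m \geq 0$ this is an infinite but free $R$-module, since between any two rationals there are infinitely many with the required positivity, in contrast to the finite binomial count in the integral case); $H^n$ of $\Os(-m)$ is the span of monomials with all exponents $<0$ summing to $-m$, again free of infinite rank once $m > n$ guarantees such monomials exist with each exponent $\leq -1$... wait, with rational exponents one only needs each exponent strictly negative, so the bound $m>n$ is the natural threshold matching the integral statement; and $H^i = 0$ for $0 < i < n$ exactly as classically, because the intermediate cohomology of each finite sub-Koszul complex vanishes.

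The completion needs one extra remark: $H^0(\Os(m))$ should be taken inside the restricted series $\tensor[_{\ds{Q}}]{\rr}{^n}$, i.e. with the coefficient condition $|a_i| \to 0$, but this does not change freeness — one exhibits an explicit topological basis of the completed module by the monomials themselves, and freeness of the $R$-module of such series is exactly what the admissibility lemma (via the EGA0 \S7.5 description) provides. For the higher cohomology one observes that the \v{C}ech differentials respect the grading and the coefficient-vanishing condition, so passing to the completed complex is harmless and $H^i$ inherits the same description.

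The main obstacle I anticipate is making the interchange of limits fully rigorous: $\perfq$ is by construction a double limit $\varprojlim_\lambda \varinjlim_i$, and one must check that \v{C}ech cohomology of $\Os(m)$ on the noetherian approximants $\ds{P}^n$ over $R[X_1^{1/i},\ldots]$ behaves well under both the direct limit over $i$ (unproblematic, since cohomology commutes with filtered colimits of sheaves on a fixed space) and the inverse limit over the ideal-of-definition filtration (here one needs a Mittag-Leffler / $\varprojlim^1$-vanishing argument, which holds because the transition maps on the relevant free modules are surjective with the natural topology). Once that bookkeeping is in place, the three assertions drop out of the classical pattern with the grading group enlarged, and freeness of infinite rank is simply the statement that the indexing set of monomials — a subset of $\{a \in \ds{Q}^{n+1} : \sum a_j = m,\ \text{sign conditions}\}$ — is infinite.
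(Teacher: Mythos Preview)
Your approach is essentially the one the paper takes: the paper's proof consists of a single sentence referring to the \v{C}ech computation on the standard affine cover $\{X_i\neq 0\}$ carried out in \cite{Bedi2018} (itself adapted from \cite{ravi2}), and simply asserts that it goes through ``word for word'' with $\perfp$ replaced by $\perfq$. Your write-up supplies exactly that computation, together with limit-interchange and completion bookkeeping that the paper does not spell out; so there is no discrepancy in strategy, only in level of detail.
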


\begin{proof}
The proof from \cite[pages 71-78]{Bedi2018} applies here word for word, by changing $\perfp$ to $\perfq$, which is in turn adapted from \cite{ravi2}.
\end{proof}

The Pic group for $\perfq$ would be $\ds{Q}$, and is computed precisely as the pic group of $\perfp$, which is $\ds{Z}[1/p]$ \cite{Bedi2018}.

\begin{mdef}~\\
\begin{enumerate}
\item Let $A$ be an adic (topological) ring with finitely generated ideal of definition $\id{a}$ and $\mathfrak{X}=\spf A$ with corresponding sheaf of topological rings $\mathcal{O}_\mathfrak{X}$ as constructed above. The locally ringed space $(\mathfrak{X},\mathcal{O}_\mathfrak{X})$ (denoted again by $\mathfrak{X}$) is called the affine formal scheme of $A$.
\item A formal scheme is locally isomorphic to an affine formal scheme, in other words every $x\in\mathfrak{X}$ admits an open neighborhood $U$ where $(U,\mathcal{O}_\mathfrak{X}\vert_U)$ is affine formal scheme.
\item The completed tensor product of two adic rings $A$ and $B$ with ideal of definitions $\id{a},\id{b}$ respectively over a ring $R$ is defined as
\[A\widehat{\otimes}_RB:=\varprojlim A/\id{a}^n\otimes_R B/\id{b}^n\]
$A\hat{\otimes}_R B$ is an adic ring with ideal of definition generated by the image of $\id{a}\otimes_RB+A\otimes_R\id{b}$ if $\id{a},\id{b}$ are finitely generated.
\item
Fiber product of two affine formal schemes $\spf A,\spf B$ over $\spf R$ is given by $\spf(A\widehat{\otimes}_RB)$
\end{enumerate}

\end{mdef}

\subsection{Irreducibility} The irreducible hyperplane $X\in R\lr{X}$ is no longer irreducible in $R\lr{X^{1/p}}$, but its avatar $X^{1/p}$ is irreducible. This is taken as the motivation for definition of codimension one.
\begin{mdef}[codimension 1]
An element $f(X)\in R\lr{X}_\infty$ is defined to be of codimension one if
\[f(X),f(X^{1/p}),\ldots,f(X^{1/p^i}),\ldots\]
 are codimension one in \[R\lr{X},R\lr{X^{1/p}},\cdots,R\lr{X^{1/p^i}},\ldots\] respectively. The above is naturally extended to multivariate case.

 An element $f(X_1,\ldots,X_n)\in R\lr{X_1,\ldots,X_n}_\infty$ is defined to be codimension one if
\[f(X_1,\ldots,X_n),f(X_1^{1/p},\ldots X_n^{1/p}),\ldots,f(X_1^{1/p^i},\ldots,X_1^{1/p^i}),\ldots\]
 are codimension one in \[R\lr{X_1,\ldots X_n},R\lr{X_1^{1/p},\ldots,X_n^{1/p} },\cdots,R\lr{X_1^{1/p^i},\ldots,X_n^{1/p^i}},\ldots\] respectively.

\end{mdef}

\begin{rem}
If the variable $X$ and $X^{1/p^i}$ are interchanged, then $f(X)=g(X)\cdot h(X)$ if and only if $f(X^{1/p^i})=g(X^{1/p^i})\cdot h(X^{1/p^i})$, irreducibility in $R\lr{X_1,\ldots X_n}$ carries over to irreducibility in $R\lr{X_1^{1/p^i},\ldots X_n^{1/p^i}}$ and is thus taken as the avatar of irreducibility in $R\lr{X_1,\ldots X_n}_\infty$. Furthermore, the elements of $R\lr{X}_\infty$ are of the form $f(X,X^{1/p},X^{1/p^2},\ldots)$ and for codim 1, only elements of the form $f(X)$ are being considered.
\end{rem}

The above definition makes it possible to define a cover for a space of the form $R\lr{X_1,\ldots,X_n}_\infty$ and do computations using Weil divisors. For example, the projective space $\ds{P}^n$ can be covered with affine patches defined by $X_i=0$. In fact, the cover of a space will now always be compact since by definition codimension one elements live in notherian ring $R\lr{X_1,\ldots, X_n}$.

\section{Attaching roots to ideal of definition} \label{rat1}

\begin{construction}\label{const1}
In Galois theory roots are routinely attached to extend rings. Let $a\in R$ generate the Ideal of definition. The $d$th root of $a$ can be attached to $R$ as
\begin{equation}
R_1=\dfrac{R[X]}{X^d-a}\qquad\text{ or }\qquad R_1=R[a^{1/d}].
\end{equation}

The above can be repeated inductively to get
\begin{equation}
\begin{aligned}
R_2=\dfrac{R_1[X]}{X^{d^2}-a}\qquad&\text{ or }\qquad R_2=R_1[a^{1/d^2}]=R[a^{1/d},a^{1/d^2}]=R[a^{1/d^2}]\\
R_i=\dfrac{R_{i-1}[X]}{X^{d^i}-a} \qquad&\text{or}\qquad R_i=R_{i-1}[a^{1/d^i}]=R[a^{1/d},a^{1/d^2},\ldots,a^{1/d^i}]=R[a^{1/d^i}].
\end{aligned}
\end{equation}

Thus, according the philosophy adopted in this tract, there is an increasing sequence of rings
\begin{equation}\label{attach2}
R\subset R_1\subset R_2\subset \ldots \subset R_i\subset \ldots \text{ with direct limit } \varinjlim R_i=\bigcup_i R_i=R[a^{1/d}, a^{1/d^2},\ldots]
\end{equation}

\end{construction}

\begin{rem}
It is assumed that the ideal of definition $a$ is finitely generated in this tract. Furthermore, one could add $a^{1/p^i}$ to $R$. This, will not have an impact on topology since the neighborhoods are still considered as $a^n, n\in\ds{N}$  \cite[pp 62, Corollaire 7.1.8]{PMIHES_1960__4__5_0}. \end{rem}

\begin{slogan}
Use Noetherian rings to construct neighborhoods, and then add fractional powers of ideals of definition to make the ring Non-Notherian. The direct limit functor is exact and hence should preserve the properties that are desired.

\end{slogan}

The above leads to following definition.

\begin{mdef}
Let $A$ be an admissible noetherian ring with prescribed maximal ideal of definition generated by a single element $a\in A$, and $a$ has no fractional powers in $A$. Then the ring $A'$ obtained from $A$ by attaching $d$th power roots of $a$ will be called $\eka^d$ ring. (see construction \ref{const1})
\end{mdef}

The word `eka' means one in hindi to represent the principal ideal which has be fractalized into myriad parts. The above definition can be extended in the obvious manner to ideals with multiple generators (attach roots of generators). Moreover, $\eka^1$ recovers the ring $A$ with ideal of definition $a$.

\begin{mdef}
Let $A$ be an admissible ring with prescribed ideal of definition finitely generated by set $\{a_i\}_{i\in J}\in A$, and each $a_i$ have no fractional powers in $A$. Then the ring $A'$ obtained from $A$ by attaching $d$th power roots of $\{a_i\}$  will be called $J\eka^d$ ring.
\end{mdef}

\begin{exam}
Let the pair $(A,I)$ denote the ring and its prescribed ideal of definition.
\begin{enumerate}
\item $(\ds{Z}_p\lr{T},p)$ has $\eka^d$ avatar $\ds{Z}_p[p,p^{1/d}, p^{1/d^2},\ldots]\lr{T}$.
\item $(\ds{Z}_p[[T]],T)$ has $\eka^d$ avatar $\ds{Z}_p[T,T^{1/d},T^{1/d^2},\ldots]$.
\item $(\ds{Z}_p\pht\lr{T},(p,T))$ has $\eka^d$ avatar $\ds{Z}_p[p,T,p^{1/d},T^{1/d}, p^{1/d^2},T^{1/d^2}\ldots]\pht\lr{T}$.
\item $(\ds{Z}_p[T],0)$ has $\eka^d$ avatar $R'$ with lots of nilpotents.
\begin{equation}
R_1=\dfrac{\ds{Z}_p[T][X]}{X^d}\quad\text{ and recursively }\quad R_i=\dfrac{R_{i-1}[X_i]}{X_i^{d^i}} \quad R'=\varinjlim_i R_i.
\end{equation}
A direct way of expressing the above is by writing the $d^i$th roots of zero in terms
of indeterminates as $X_1^d=0,X_2^{d^2}=0,\ldots, X_i^{d^i}=0$ and hence the ring obtained is $S$ below.
\begin{equation}
R=\frac{\ds{Z}_p[X_1,X_2,\ldots][T]}{X_1^d,X_2^{d^2},\ldots}\text{ and } S=\frac{R}{X_1=X_2^d,X_2=X_3^d,\ldots}.
\end{equation}
Notice, that denominator in $S$ is essentially an avatar of a inverse projective limit of frobenius if $d=p$.
\item $(\ds{F}_p,0)$ has $\eka^d$ avatar $\ds{F}_p[p^{1/d},p^{1/d^2},\ldots]$ (also denoted as $\ds{F}_p[p^{1/d^\infty}]$) with lots of nilpotents as in example above.
\end{enumerate}
\end{exam}
\subsubsection{Attaching all rational roots}\label{rat2}

\begin{construction}\label{const2}
In the spirit of construction \ref{const1b}, the rational roots can be attached for the $a\in R$ which generates the ideal of definition.
\begin{equation}
R_2=\dfrac{R[X]}{X^2-a}\qquad\text{ or }\qquad R_2=R[a^{1/2}].
\end{equation}

The above can be repeated inductively to get
\begin{equation}
\begin{aligned}
R_3=\dfrac{R_1[X]}{X^3-a}\qquad&\text{ or }\qquad R_3=R[a^{1/2},a^{1/3}]\\
R_i=\dfrac{R_{i-1}[X]}{X^i-a} \qquad&\text{or}\qquad R_i=R_{i-1}[a^{1/i}]=R[a,a^{1/2},a^{1/3},\ldots,a^{1/i},\ldots].
\end{aligned}
\end{equation}

Thus, according the philosophy adopted in this tract, there is an increasing sequence of rings
\begin{equation}\label{attach2}
R\subset  R_2\subset \ldots \subset R_i\subset \ldots \text{ with direct limit } \varinjlim R_i=\bigcup_i R_i=R[a,a^{1/2},a^{1/3},\ldots,a^{1/i},\ldots].
\end{equation}

\end{construction}

The above constructions naturally lead to the following analogue of Lemma \ref{admissible1}.
\begin{lemma}\label{ekaAdmissible1}
Let $R$ be an admissible $\eka^d$ ring with ideal of definition $\id{a}$, then the ring $A=R\lr{X_1,\ldots,X_r}_\infty$ is admissible.
\end{lemma}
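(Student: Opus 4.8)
The statement to be proved is Lemma \ref{ekaAdmissible1}: if $R$ is an admissible $\eka^d$ ring with ideal of definition $\id{a}$, then $A = R\lr{X_1,\ldots,X_r}_\infty$ is admissible. Since admissibility of $R$ is given as a hypothesis, the content of the lemma is entirely about passing from $R$ to the restricted power series ring $A$, exactly parallel to Lemma \ref{admissible1}; the point of stating it separately is that the $\eka^d$ ring is constructed as a direct limit $R = \varinjlim_i R_i$ of Noetherian admissible pieces, and one should check that the construction of $A$ interacts well with this presentation. The plan is therefore: first reduce to Lemma \ref{admissible1} by observing that an $\eka^d$ ring is in particular an admissible ring, then indicate why the two descriptions of $A$ (completion of the direct limit of polynomial rings over $R$, versus direct limit over $i$ of restricted power series over $R_i$) agree, so that nothing is lost.

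\textbf{Step 1: $A$ as a restricted power series ring over $R$.} By Construction \ref{const1a}, $A = R\lr{X_1,\ldots,X_r}_\infty = \varprojlim_\lambda \varinjlim_i R[X_1^{1/p^i},\ldots,X_r^{1/p^i}]$, where $\lambda$ runs over the fundamental system of neighborhoods of $0$ given by powers of $\id{a}$. As in Lemma \ref{admissible1}, identify $A$ with the subring of the formal power series ring $R[[X_1^{1/p^\infty},\ldots,X_r^{1/p^\infty}]]$ consisting of series $\sum_\alpha c_\alpha X^\alpha$ with $\alpha \in (\ds{N}[1/p])^r$ and $c_\alpha \to 0$ in $R$ (monomials ordered by rational degree). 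Since $R$ is admissible, this is literally the setting of Remark \ref{7.5.4}(1) extended to the index set $(\ds{N}[1/p])^r$, which is exactly what Lemma \ref{admissible1} established. Hence $A$ is admissible with ideal of definition $\id{a}A$ and fundamental system of neighborhoods of $0$ given by the sets $V'$ of series with all coefficients in a neighborhood $V$ of $0$ in $R$.

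\textbf{Step 2: compatibility with the $\eka^d$ presentation.} Because $R$ is an $\eka^d$ ring, $R = \varinjlim_j R_j$ with each $R_j$ Noetherian admissible with ideal of definition generated by a root of $a$. One checks that forming $R\lr{X_1,\ldots,X_r}_\infty$ commutes with this direct limit up to completion: $\varinjlim_j\bigl(R_j\lr{X_1,\ldots,X_r}_\infty\bigr)$ has completion $A$, since a null sequence of coefficients in $R$ involves, for each fixed $\id{a}$-adic precision, only finitely many coefficients and hence lies in some $R_j$ by the exactness of $\varinjlim$ noted in the Slogan. This shows the lemma is genuinely the $\eka^d$ analogue of Lemma \ref{admissible1} and that the ideal of definition of $A$ may be taken to be the image of $\id{a}$, which is finitely generated since $\id{a}$ is (generated by a single root of $a$). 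Alternatively, one adapts the proof of Proposition 3 of \cite[pp.~213]{n1998commutative} verbatim, replacing the coefficient-extraction maps $\phi_{n_1,\ldots,n_r}^{\id{a}}$ by their $\ds{N}[1/p]^r$-indexed versions as in the proof of Lemma \ref{admissible1}.

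\textbf{Main obstacle.} There is no deep obstacle: the work is entirely in verifying that the rational-degree indexing and the $\eka^d$ direct-limit presentation do not interfere with the EGA0 \S7.5 formalism. The one place to be slightly careful is the ordering of monomials of rational (rather than integral) degree and the claim that $c_\alpha \to 0$ makes sense with this index set — but this is precisely the point already handled in Lemma \ref{admissible1}, so here it suffices to invoke that lemma after recording that an $\eka^d$ ring is admissible. Thus the proof reduces to a one-line citation of Lemma \ref{admissible1} together with the remark that $\eka^d$ rings satisfy its hypothesis.
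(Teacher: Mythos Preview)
Your proposal is correct and matches the paper's approach exactly: the paper's entire proof is the single line ``Same proof as that of Lemma \ref{admissible1},'' which is precisely the conclusion you reach at the end of your Step~1 and in your final paragraph. Your Step~2 about compatibility with the $\eka^d$ direct-limit presentation is additional commentary the paper does not include (and does not need, since the hypothesis already grants that $R$ is admissible), but it is harmless and does not detract from the argument.
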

\begin{proof}
  Same proof as that of Lemma \ref{admissible1}.
\end{proof}

\subsubsection{Bijection}
Notice the following bijection
\begin{equation}
\begin{aligned}
\text{Completed Ring with respect to}&\longleftrightarrow \text{Completed Eka ring with respect to}\\
\text{a principal ideal of definition}\qquad&\qquad \text{same ideal of definition}\\
(A,I)&\longleftrightarrow(A',I)\text{ where }A' \text{ is } \eka^p A
\end{aligned}
\end{equation}
This suggests we could consider two categories (i) $\mathcal{D}$ whose objects are Completed Rings with a principal ideal of Definition and (ii) $\mathcal{D}{\eka^p}$ whose objects are Completed eka Rings with same ideal of Definition, but now come attached with all the $p$th power roots. The objects of two categories are in one to one correspondence by construction, for example,
\begin{equation}
\begin{aligned}
\ds{Z}_p&\leftrightarrow\ds{Z}_p[p^{1/p^\infty}], \\
\ds{Q}_p&\leftrightarrow\ds{Q}_p(p^{1/p^\infty}), \\
\ds{F}_p[[T]]&\leftrightarrow\ds{F}_p[[T^{1/p^\infty}]], \\
\ds{F}_p((T))&\leftrightarrow\ds{F}_p((T^{1/p^\infty})). \\
\end{aligned}
\end{equation}

But additional roots bring additional morphisms in $\mathcal{D}{\eka^p}$, making equivalence of categories a non obvious problem.

Furthermore, in the spirit of Raynaud's generic fiber \cite{ray1a}, the above correspondence can be carried over to field of fractions (since, localization in an exact functor).

\subsection{Examples}
Examples of eka rings are given in \ref{e1} and \ref{e3}. Note that ring $\ds{Z}_p\lr{T,T^{1/p},T^{1/p^2},\ldots}$ in \ref{e2} is not an eka since, it does not have $p$th power roots of $p$. Similarly, $\ds{Z}_p\lr{T^r}$ (in  \ref{e4}) is an a non example, but adding $p$th power roots $\ds{Z}_p[p^{1/p^\infty}]\lr{T^r}$ makes it eka.
\begin{enumerate}
\item\label{e1}  A basic example of such a case is $\ds{Z}_p[p^{1/p^\infty}]$, attaching $p$th power roots of $p$ to $p$adic integers. Given any ring $S$ and $t\notin S$, a topological ring of the form $S[[t]]$ can be formed (with $t$-adic valuation) so that neighborhood of zero is given by $t^n, n\in\ds{N}$. Furthermore all $p$th roots of $t$ can be added (which gives $S[[t^{p^{1/\infty}}]]$), to get infinitely many ideals of definition.
\item\label{e2} Ring $R=\ds{Z}[T,T^{1/p},T^{1/p^2},\ldots]$ completed with respect to ideal $pR$ will give the ring $\ds{Z}_p\lr{T,T^{1/p},T^{1/p^2},\ldots}$.
\item Ring $R=\ds{Z}[T,T^{1/d},T^{1/d^2},\ldots]$ completed with respect to ideal $pR$ will give the ring $\ds{Z}_p\lr{T,T^{1/d},T^{1/d^2},\ldots}$ where $d$ is not prime.
\item \label{e4} Ring $R=\ds{Z}[T^r]$ where $r\in\ds{Q}$ completed with respect to ideal $pR$ will give the ring $\ds{Z}_p\lr{T^r}$. Here $T^r$ represents the entire set of $\{T^{r_1},T^{r_2},\ldots\}$  with $r_i\in\ds{Q}$ and the series ordered by the ordering of rational numbers.

\item\label{e3} $R=\ds{Z}_p[p^{1/p^\infty}][T,T^{1/p},T^{1/p^2},\ldots]$ completed with respect to ideal $pR$ will give the ring $\ds{Z}_p[p^{1/p^\infty}]\pht\lr{T,T^{1/p},T^{1/p^2},\ldots}$.
\item $R=\ds{Z}_p[\zeta^{1/p^\infty}][T,T^{1/p},T^{1/p^2},\ldots]$ completed with respect to ideal $pR$ will give the ring $\ds{Z}_p[\zeta^{1/p^\infty}]\pht\lr{T,T^{1/p},T^{1/p^2},\ldots}$. The $\zeta^{1/p^i}$ represents roots of unity.
\item $R=\ds{Z}[\zeta^{1/p^\infty}][T,T^{1/p},T^{1/p^2},\ldots]$ completed with respect to ideal $pR$ will give the ring $\ds{Z}_p[\zeta^{1/p^\infty}]\pht\lr{T,T^{1/p},T^{1/p^2},\ldots}$. The $\zeta^{1/p^i}$ represents roots of unity.
\end{enumerate}

\subsection{Finite Fields $\ds{F}_p$}

Starting with a finite field $\ds{F}_p$ there are two ways in which a fundamental ideal can be attached. First attach an indeterminate $T$ and work with respect to ideal $T$ to form neighborhoods of zero. The second is attach avatars of zero itself, this is the Witt vector approach.

Examples of eka rings are \ref{f2} and \ref{f5}.
\begin{enumerate}
\item Start with ring $\ds{F}_p[T]$ and complete with respect to ideal $T$ to get $\ds{F}_p[[T]]$.
\item \label{f2}Start with ring $\ds{F}_p[T^{1/p^\infty}]$ and complete with respect to ideal $T$ to get $\ds{F}_p[[T^{1/p^\infty}]]$. This can be extended to any rational power roots of $T$.
\item Attach avatars of zero in $\ds{F}_p$, that is $(1,p,p^2,p^3,\ldots)$, which will form neighborhoods of zero. This can be done by the Witt vector, $W(\ds{F}_p)=\ds{Z}_p$.
\item Attach avatars of zero in $\ds{F}_p[T]$, that is $(1,p,p^2,p^3,\ldots)$, which will form neighborhoods of zero. This can be done by the Witt vector, $W(\ds{F}_p[T])$.
\item \label{f5} Attach avatars of zero in $\ds{F}_p[T^{1/p^\infty}]$, that is $(1,p,p^2,p^3,\ldots)$, which will form neighborhoods of zero. This can be done by the Witt vector $W(\ds{F}_p[T^{1/p^\infty}])$.
\end{enumerate}

\subsection{Eka and Separability}
Recall that separability (or Hausdorff) means that $\cap_{i}\id{a}^i=0$ for the ideal of definition $\id{a}$. The intersection comes from the chain
\begin{equation}
\id{a}\supset \id{a}^2\supset\ldots\supset\id{a}^n\supset\ldots\supset\bigcap_{i}\id{a}^i=0
\end{equation}

Adding $d$th power roots extends the chain further
\begin{equation}
\ldots\supset\id{a}^{1/d^2}\supset\id{a}^{1/d}\supset\id{a}\supset \id{a}^2\supset\ldots\supset\id{a}^n\supset\ldots\supset\bigcap_{i}\id{a}^i=0,
\end{equation}
but has no impact on the intersection since bigger sets are being added. Hence, a separable ring still remains separable even after making it $\eka^d$.

\subsection{Eka Valuation Ring}

One can also consider the eka version of Discrete Valuation Rings. Let $\pi$ be the uniformizing parameter of a discrete valuation ring $R$. Then one can construct the $\eka^d$ DVR as $R[\pi^{1/d},\pi^{1/d^2},\ldots,\pi^{1/d^i},\ldots]$. The valuation group is no longer discrete but $\ds{Z}[1/d]$. Furthermore, one could even consider $\eka^\ds{Q}$ version where all rational roots of uniformizer are attached, for example, $R[\pi^i],i\in\ds{Q}_{\geq 0}$.

The residue field $\kappa$ does not change for the $\eka^d$, as shown below.
\begin{equation}
  R/\lr{\pi}=\kappa =R[\pi^{1/d},\pi^{1/d^2},\ldots,\pi^{1/d^i},\ldots]/\cup_i\pi^{1/d^i},\qquad i\in\ds{Z}_{\geq 0}
\end{equation}

Let $f:R\ra R'$ be the morphism of two DVRs with uniformizer of $R$ mapping to uniformizer of $R'$, (local morphism)
Furthermore, let $\kappa'$ (residue field of $R'$) be a finite separable extension of $\kappa$, then $f$ is said to be unramified. Replacing $R$ and $R'$ with their $\eka^d$ avatar, the morphism still remains unramified because the residue fields remain the same.

The Krull dimension of a $R$ is one given by the chain $0\subset \lr{\pi}\subset R$, this chain becomes
\begin{equation}
  0\subset \cup_i\lr{\pi^{1/d^i}}\subset R[\pi^{1/d},\pi^{1/d^2},\ldots,\pi^{1/d^i},\ldots],\qquad i\in\ds{Z}_{\geq 0},
\end{equation}
preserving the Krull dimension. The ideal $\lr{\pi}$ is no longer prime in this ring, since $\pi^{1/d}\cdot\pi^{(d-1)/d}=\pi$ is in the ring but neither $\pi^{1/d}$ or $\pi^{(d-1)/d}$ are in $\lr{\pi}$.

Moreover, if the map $f:R\ra R'$ is flat (in addition to be being unramified), then $f$ is called etale. Again, replacing $R$ and $R'$ with its $\eka^d$ avatar the map $f$ will remain etale. This can be observed by using the ideal criterion for flatness. Injectivity of ideal generated by the uniformizer will be preserved under direct limit. In other words flatness comes from injectivity $\lr{\pi}\otimes_R R'\hookrightarrow R'$, applying the direct limit gives $\varinjlim \lr{\pi}\otimes_{\varinjlim R} \varinjlim R'\hookrightarrow \varinjlim R' $.

\subsection{Eka Local Rings}

Let $R$ be a local ring with maximal ideal $\id{m}$ generated by the set $(a_\lambda)_{\lambda\in\Lambda}$ and the residue field $k=R/\id{m}$. Consider the $J\eka^d$ avatar of $R$ given as $S=\cup_iR[a_\lambda^{1/d^i}], i\in\ds{Z}_{\geq 0}, \lambda\in\Lambda$, then $S$ is a local ring with maximal ideal $\id{m}'=\cup_i(a^{1/d^i}_\lambda)_{\lambda\in\Lambda}$ and the residue field is still $k=R'/\id{m}'$.

\subsection{Eka Rings with Prime Ideals}

Let $R$ be a ring with prime ideal $\id{p}$ generated by the set $(a_\lambda)_{\lambda\in\Lambda}$ and the domain $D=R/\id{p}$. Consider the $J\eka^d$ avatar of $R$ given as $S=\cup_iR[a_\lambda^{1/d^i}], i\in\ds{Z}_{\geq 0}, \lambda\in\Lambda$, then $S$ has a prime ideal $\id{p}'=\cup_i(a^{1/d^i}_\lambda)_{\lambda\in\Lambda}$ and the domain is still $D=R'/\id{p}'$.

Hence, one can naturally do algebraic geometry over non notherian $\eka^d$ rings by emulating the results of Notherian rings.

\subsection{Rees Algebra}
Let $R$ be an admissible ring with an ideal of defintion generated by a single element $a$. Then the rees algebra is given as
\begin{equation}\label{rees1}
  B_aR=R\oplus aR\oplus a^2R \oplus\ldots=\bigoplus_{n\geq 0}a^nR
\end{equation}
The problem with such a blow up is that under reduction $\mod a$ it reduces to $R\mod a$ and all the ideals vanish. In order to preserve the blow up under $\mod a$ mapping it is neccessary to introduce the $\eka^d$ avatar.
\begin{mdef}
  Let $R$ be an $\eka^d$ ring with ideal of definition generated by $a$, then its $\eka^d$ blow up is given as
  \begin{equation}
    R'=\bigoplus_{n\geq 0}a^nR\oplus a^{1/d}R\oplus a^{1/d^2}R \oplus\ldots=  B_aR\oplus\bigoplus_{n\geq 1}a^{1/d^n}R
  \end{equation}
\end{mdef}

This blow up can be transfered from char zero (say $\ds{Z}_p[p^{1/p^\infty}]$) to finite character (say $\ds{F}_p[p^{1/p^\infty}]$).

\subsubsection{Artin Rees}
In this tract the rings are Non Notherian, thus the Artin-Rees Lemma cannot be used directly. Instead a much simpler construction is considered.

Let $R$ be a an $\eka^d$ ring with ideal of defintion $a$ (a single element $a\in R$), and $N\subset M$ be a infinitely (or finitely) generated $R$ modules. Then the corresponding Rees Algebras with respect to ideal $a$ are denoted below.

\begin{equation}
\begin{aligned}
  B_aR&=R\oplus aR\oplus a^2R\oplus\ldots\\
  M'&=M\oplus aM\oplus a^2M\oplus\ldots\\
  N'&=N\oplus aN\oplus a^2N\oplus\ldots\\
\end{aligned}
\end{equation}
If $M$ is infinitely generated by $x_1,x_2,\ldots$ over $R$ they also generate $M'$ over $B_aR$, similary if $N$ is infinitely generated by $y_1,y_2,\ldots$ over $R$ they also generate $N'$ over $B_aR$.

The filtration is $a$ stable, that is $aM_n=M_{n+1}$ for the blow up or filtered graded module $\bigoplus_{n\geq 0}M_n$. Furthermore, $a^nN=a^nM\cap N,n\geq 0$, hence the $a$ adic filtration of $M$ induces $a$ adic filration of the submodule $N$.

\section{Equivalence of categories}\label{tilt13}

A ring homomorphism $\phi:A[X]\ra B$ is called is called an evaluation in $b\in B$ if and only if
\begin{equation}
  \phi(X)=b \text{ and }\phi\circ i=\kappa,
\end{equation}
where $i:A\hookrightarrow A[X]$ and $\kappa:A\ra B$ is a unital ring homomorphism. This ring homomorphism can now be considered for fractional powers, $\phi:A[X^{1/p}]\ra B$ where now $X^{1/p}\mapsto b'$, which would then imply that $X\mapsto b'^p$.

The above example leads us to consider the following direct system of ring homomorphisms with vertical arrows as evaluation homomorphisms $X^{1/p^i}\mapsto b^{1/p^i}, i\in\ds{Z}_{i\geq 0}$.

\begin{equation}\label{dirsys1}
 \begin{tikzpicture}
 []
        \matrix (m) [
            matrix of math nodes,
            row sep=2.5em,
            column sep=1.5em,
                   ]
{ |[name=q1]|A[X] &  |[name=a1]|A[X^{1/p}] & |[name=b1]|\cdots & |[name=c1]|A[X^{1/p^i}]&|[name=u1]|\cdots &|[name=v1]|\varinjlim_iA[X^{1/p^i}]\\
  |[name=q2]|B&  |[name=a2]|B[b^{1/p}] &      |[name=b2]|\cdots & |[name=c2]|B[b^{1/p^i}]&|[name=u2]|\cdots&|[name=v2]|\varinjlim_iB[b^{1/p^i}]\\
        };

        \path[overlay,right hook-latex, font=\scriptsize,>=latex]
(q1) edge (a1)
(a1) edge (b1)
(b1) edge (c1)
(c1) edge (u1)
(u1) edge (v1)

(q2) edge (a2)
(a2) edge (b2)
(b2) edge (c2)
(c2) edge (u2)
(u2) edge (v2)
 ;
 \path[overlay,->, font=\scriptsize,>=latex]
(q1) edge (q2)
(a1) edge (a2)

(c1) edge (c2)

(v1) edge (v2)
;
\end{tikzpicture}
\end{equation}

A special case worth mentioning is the mapping $X\mapsto 0$, then $X^{1/p^i}\mapsto b^{1/p^i}, i\in\ds{Z}_{>0}$ where $b=0$ and $b^{1/p^i}$ represent nilpotents. In particular if we consider the mapping $A[X]\ra B\ra B\mod b$ then the kernel is generated by $b$ and $X\mapsto 0,X\mapsto b$ get mapped to kernel, but it is no longer true in $\varinjlim_iA[X^{1/p^i}]\ra \varinjlim_iB[b^{1/p^i}]$. This fact
 has a topological interpretation, the neighborhoods of zero are formed by the nilpotents.

 Notation

\begin{equation}
\begin{aligned}
  A[X^{1/p^\infty}]:=\varinjlim_iA[X^{1/p^i}]=\bigcup_iA[X^{1/p^i}]&\text{ and }B[b^{1/p^\infty}]:=\varinjlim_iB[b^{1/p^i}]=\bigcup_iB[b^{1/p^i}]\\
  A[X^{1/d^\infty}]:=\varinjlim_iA[X^{1/d^i}]=\bigcup_iA[X^{1/d^i}]&\text{ and }B[b^{1/d^\infty}]:=\varinjlim_iB[b^{1/p^i}]=\bigcup_iB[b^{1/d^i}]
  \end{aligned}
\end{equation}

\subsection{Comparing $\ds{Z}_p[p^{1/d^\infty}][X^{1/p^\infty}] $ to $\ds{F}_p[p^{1/d^\infty}][X^{1/d^\infty}] $}
Inspired by \cite{scholze_1}, we give a new construction relating $\chr 0$ and $\chr p$.
Consider the following mapping
\begin{equation}
\begin{aligned}
\Mor(\ds{Z}_p[X],\ds{Z}_p[X])&\xra{\mod p} \Mor(\ds{F}_p [X],\ds{F}_p [X])\\
X\mapsto 0\text{ and }X\mapsto p&\xra{\mod p} X\mapsto 0\\
\dfrac{\ds{Z}_p[X]}{X}\text{ and }\dfrac{\ds{Z}_p[X]}{X-p}&\xra{\mod p}\dfrac{\ds{F}_p [X]}{X}
\end{aligned}
\end{equation}
which is not one to one. The problem comes from the fact that $X\mapsto 0$ and $X\mapsto p^i,i\in\ds{Z}_{>0}$ will mapto $X\mapsto 0$ in $\Mor(\ds{F}_p [X],\ds{F}_p [X])$, and this arises precisely because the neighborhood of zero generated by the ideal $(p)\in\ds{Z}_p$ has been glued together by the $\mod p$ map, or completely destroyed.

To avoid destruction of neighborhood by $\mod p$ mapping, neighborhood elements that are not destroyed have to be introduced, that is $(p^{1/p}, p^{1/p^2},\ldots)$, this is precisely the $\eka^d$ approach.
\begin{equation}
\begin{aligned}
\Mor(\ds{Z}_p[p^{1/d^\infty}][X^{1/d^\infty}],\ds{Z}_p[p^{1/d^\infty}][X^{1/d^\infty}])&\xra{\mod p} \Mor(\ds{F}_p[p^{1/d^\infty}] [X^{1/d^\infty}],\ds{F}_p[p^{1/d^\infty}] [X^{1/d^\infty}])\\
(X,X^{1/d},X^{1/d^2},\ldots)\mapsto (0,0,0,\ldots) &\xra{\mod p} (X,X^{1/d},X^{1/d^2},\ldots)\mapsto (0,0,0,\ldots)\\
(X,X^{1/d},X^{1/d^2},\ldots)\mapsto (p,p^{1/d},p^{1/d^2},\ldots) &\xra{\mod p} (X,X^{1/p},X^{1/p^2},\ldots)\mapsto (0,p^{1/d},p^{1/d^2},\ldots)\\
\end{aligned}
\end{equation}
  A functor $\mathcal{F}:\scr{A}\ra\scr{B}$ gives equivalence of categories $\scr{A}\equiv\scr{B}$ if it is full, faithful and essentially surjective \cite[pp. 250 Proposition 10.6.2]{hazewinkel2005algebras}.

  Let $\scr{A}$ be the category with one object $\ds{Z}_p[p^{1/d^\infty}][X^{1/d^\infty}]$ and the morphisms are from the object to itself denoted as $\Mor(\ds{Z}_p[p^{1/d^\infty}][X^{1/d^\infty}],\ds{Z}_p[p^{1/d^\infty}][X^{1/d^\infty}])$. These morphisms are given by evaluation maps $X\mapsto T$ and are required to be homomorphisms. Hence, the mapping $X\mapsto T$ means a compatible tuple.
  \begin{equation}
    (X,X^{1/d},X^{1/d^2},\ldots)\mapsto (T,T^{1/d},T^{1/d^2},\ldots),\qquad T^{1/d^i}\in\ds{Z}_p[p^{1/d^\infty}][X^{1/d^\infty}]
  \end{equation}

  Let $\scr{B}$ be the category with one object $\ds{F}_p[p^{1/d^\infty}][X^{1/d^\infty}]$ and the morphisms are from the object to itself denoted as $\Mor(\ds{F}_p[p^{1/d^\infty}][X^{1/d^\infty}],\ds{F}_p[p^{1/d^\infty}][X^{1/d^\infty}])$. These morphisms are given by evaluation maps $X\mapsto t$ and are required to be homomorphisms. Hence, the mapping $X\mapsto t$ means a compatible tuple.
  \begin{equation}
    (X,X^{1/d},X^{1/d^2},\ldots)\mapsto (t,t^{1/d},t^{1/d^2},\ldots),\qquad t^{1/d^i}\in\ds{F}_p[p^{1/d^\infty}][X^{1/d^\infty}]
  \end{equation}
Let $\mathcal{F}:\scr{A}\ra\scr{B}$ be the functor $\mod p$.
\begin{lemma}
  Let $p\nmid d$ and $\mathcal{F}:\scr{A}\ra\scr{B}$ be defined as above, then $\mathcal{F}$ gives an equivalence of categories.
\end{lemma}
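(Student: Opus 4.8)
The plan is to verify that $\mathcal{F}$ is essentially surjective, full and faithful, using the criterion recalled just above the statement. Essential surjectivity is free: each of $\scr{A},\scr{B}$ has a single object and $\mathcal{F}$ carries the one to the other, so the whole content is that $\mathcal{F}$ is a bijection on the (unique) $\Mor$-set. I would first reformulate that set. Put $A_0=\ds{Z}_p[p^{1/d^\infty}][X^{1/d^\infty}]=\varinjlim_i\ds{Z}_p[p^{1/d^\infty}][X^{1/d^i}]$ and $\wbr{A}_0=A_0/pA_0=\ds{F}_p[p^{1/d^\infty}][X^{1/d^\infty}]$. Since $X^{1/d^j}=(X^{1/d^i})^{d^{\,i-j}}$ for $j\le i$, each ring $\ds{Z}_p[p^{1/d^\infty}][X^{1/d^i}]$ is a polynomial ring in the single variable $X^{1/d^i}$ over $\ds{Z}_p[p^{1/d^\infty}]$; hence an evaluation endomorphism $X^{1/d^i}\mapsto T_i$ of $A_0$ is exactly a sequence $(T_i)_{i\ge0}$ with $T_i\in A_0$ and $T_i^{\,d}=T_{i-1}$ (a ``$d$-power tower''), composition being substitution. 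Thus $\Mor(\scr{A})$ is the set of $d$-power towers in $A_0$, $\Mor(\scr{B})$ the set of $d$-power towers in $\wbr{A}_0$, and $\mathcal{F}$ is the map between them induced by the reduction $A_0\twoheadrightarrow\wbr{A}_0$. The lemma is therefore equivalent to: this map of tower sets is a bijection. For the proof I would use four facts about $A_0$: it is $\ds{Z}_p$-flat (because $\ds{Z}_p[p^{1/d}]=\ds{Z}_p[y]/(y^{d}-p)$ is free of rank $d$ over $\ds{Z}_p$ and flatness survives filtered colimits and polynomial extensions), hence $p$-torsion free; it is $p$-adically separated ($\ds{Z}_p[p^{1/d^\infty}]$ embeds in the valuation ring of $\wbr{\ds{Q}}_p$, where $v(p^{1/d^i})=d^{-i}$, so $\bigcap_np^n=0$, and one passes to $A_0$ coefficientwise); it is an integral domain; and, crucially, $p\nmid d$ says exactly that $\wbr{d}\in\ds{F}_p^\times$, so $d$ is a unit in $A_0/p^nA_0$ for every $n$.

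Faithfulness is where I expect the real work to lie. Suppose $(T_i)$ and $(S_i)$ are $d$-power towers in $A_0$ with $T_i\equiv S_i\pmod{pA_0}$ for all $i$; I want $T_i=S_i$. I would show, by induction on $n$, that $T_i\equiv S_i\pmod{p^nA_0}$ for all $i$ simultaneously, and then invoke $p$-adic separatedness. In the inductive step write $T_i=S_i+p^n\delta_i$; expanding $T_{i-1}=T_i^{\,d}$ and cancelling $p^n$ (legitimate since $A_0$ is $p$-torsion free) gives $\delta_{i-1}=d\,S_i^{\,d-1}\delta_i+p^n(\cdots)$, so that modulo $p$
\[
\wbr{\delta}_{i-1}=\wbr{d}\,\wbr{S}_i^{\,d-1}\,\wbr{\delta}_i\qquad\text{in }\wbr{A}_0.
\]
The obstacle is to deduce $\wbr{\delta}_i=0$ for every $i$ from this single recursion along an infinite tower: going ``down'' ($i\mapsto i-1$) is free but supplies no base case, so one must exploit the structure of $\wbr{A}_0$ to run the recursion ``up''. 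This is precisely the step that uses both hypotheses: $p\nmid d$ makes $\wbr{d}$ a unit, and the $\eka^d$ roots $p^{1/d^i}$ are exactly what keeps the reduction mod $p$ of the ideal of definition a nonzero nilpotent ideal whose powers still separate the relevant towers — the failure without them being the ``$X\mapsto0$ versus $X\mapsto p$'' collapse from the motivating discussion. Concretely I would split $\wbr{A}_0$ into its reduced quotient $\ds{F}_p[X^{1/d^\infty}]$ (a domain, where $\wbr{S}_i^{\,d-1}$ is a nonzerodivisor unless $\wbr{S}_i=0$) and the nilpotent part generated by the $p^{1/d^i}$, analyse the recursion on each, and expect the delicate point to be combining the two uniformly in $i$.

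For fullness, given a $d$-power tower $(\wbr{t}_i)$ in $\wbr{A}_0$ I would lift each $\wbr{t}_i$ to some $t_i\in A_0$, then repair $d$-power compatibility level by level using Hensel's lemma in the finite-level rings $\ds{Z}_p[p^{1/d^\infty}][X^{1/d^i}]$, which are finite free over $\ds{Z}_p$, hence $p$-adically complete, and to which Hensel applies since the relevant derivative $d\,(\cdot)^{d-1}$ is a unit wherever its argument is, by $p\nmid d$; one then assembles the corrected $t_i$ into a compatible tower by a successive-approximation (diagonal) argument, which is how the non-completeness of $A_0$ itself is circumvented. Together with essential surjectivity and faithfulness this shows $\mathcal{F}$ is full, faithful and essentially surjective, hence an equivalence of categories.
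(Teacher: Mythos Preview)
Your reformulation in terms of $d$-power towers is correct and matches the paper, and essential surjectivity is indeed free. But your fullness argument contains a genuine error, and your faithfulness argument takes a different and harder route than the paper without actually closing it.

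For fullness you assert that the finite-level rings $\ds{Z}_p[p^{1/d^\infty}][X^{1/d^i}]$ are ``finite free over $\ds{Z}_p$, hence $p$-adically complete'' and then run Hensel's lemma there. This is false on both counts: $\ds{Z}_p[p^{1/d^\infty}]$ is an infinite directed union of finite extensions and is \emph{not} $p$-adically complete, and adjoining a polynomial variable $X^{1/d^i}$ makes the ring neither finite nor complete over $\ds{Z}_p$. So Hensel does not apply and your successive-approximation lifting collapses. The paper avoids this entirely. In the Remark following the proof it observes that, because $p\nmid d$, one has $(a+b)^{1/d}\neq a^{1/d}+b^{1/d}$ in characteristic $p$, so genuine sums admit no compatible system of $d$-th roots in $\ds{F}_p[p^{1/d^\infty}][X^{1/d^\infty}]$. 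The towers $(t,t^{1/d},t^{1/d^2},\ldots)$ are therefore forced to be monomial towers, and those lift to $A_0$ verbatim --- ``lift the tuple as such'' --- with no completion or approximation at all.

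For faithfulness your recursion $\bar\delta_{i-1}=\bar d\,\bar S_i^{\,d-1}\bar\delta_i$ is correctly derived, but as you acknowledge it runs the wrong way and you never finish the argument. The paper's mechanism is different and much shorter: instead of $p$-adic induction on the depth $n$, it looks \emph{up} the tower in the index $i$. If some entry $T_j$ lies in the kernel of reduction mod $p$ (say it is $p^i$ times a unit), then its $d^k$-th root $T_{j+k}$ has $p$-valuation $i/d^k<1$ for $k$ large, so $\bar T_{j+k}\neq 0$. Thus the only tower reducing to the zero tower is the zero tower, and with the monomial description above this gives injectivity on morphisms. This ``valuation shrinks as you climb'' observation is exactly what the attached roots $p^{1/d^i}$ buy you, and it replaces the entire $p$-adic induction you were attempting.
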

\begin{proof} The equivalence follows from the fact that $\mathcal{F}$ satisfies the following.
  \begin{description}
      \item[Essential Surjectivity] The functor $\mod p$ maps the object of $\scr{A}$ to $\scr{B}$.
      \item[$\mathcal{F}$ is surjective] Every morphism in $\scr{B}$ given as $X\mapsto(t,t^{1/d},t^{1/d^2},\ldots)$ which can be lifted in $\scr{A}$ to $X\mapsto(t,t^{1/d},t^{1/d^2},\ldots)$. In other words, lift the tuple as such.
      \item[$\mathcal{F}$ is injective] The injectivity follows from the fact that the kernel of modulo $p$ map is generated by $p^i,i\in\ds{Z}_{>0}$  (and of course zero), and for any $X^{1/d^j}\mapsto p^i,i\in\ds{Z}_{>0}$, there is $X^{1/d^{i+j}}\mapsto p^{i/d^i}$ a non zero element of
$\Mor(\ds{F}_p[p^{1/d^\infty}] [X^{1/d^\infty}],\ds{F}_p[p^{1/d^\infty}] [X^{1/d^\infty}])$. Thus, the only element in the ideal $p\ds{Z}_p$ which maps to zero is zero under the evaluation map.
    \end{description}

\end{proof}

\begin{rem}
  Note that $p\nmid d$ implies that $a+b\neq(a^{1/d}+b^{1/d})^d\mod p$ or re writing $(a+b)^{1/d}\neq a^{1/d}+b^{1/d}\mod p$. Hence $X$ cannot map to $a+X,a\neq 0$ in $\ds{F}_p[p^{1/d^\infty}] [X^{1/d^\infty}]$, since $(a+X)^{1/d^i}$ is not well defined. For the same reason, $X$ cannot map to $a+p^j$. 
\end{rem}

\section{Topologically finite perfectoid Type}\label{top1}
This section studies rings of the form $R\lr{X_1,\ldots X_n}_\infty$, where $R$ is an adic ring with ideal of definition as $I$, consider following two classes \cite[p. 162-169]{bosch2014lectures}:
\begin{enumerate}
\item[(V)] $I$ is finitely generated, hence principal.
\item[(N)] $R$ is Noetherian and it does not have $I$ torsion.
\end{enumerate}
The $\eka^d$ ring is an example of type (V). If $a\in R$ generates the ideal of definition, setting one of the $X_i$ in $R\lr{X_1,\ldots X_n}_\infty$ as $a$ will give a completed $\eka^p$ ring. It can be further assumed that there is no $a$ torsion to prevent any complications.

\begin{rem}
Notice the directed system
\begin{equation}
R\lr{X}\hookrightarrow R\lr{X^{1/p}}\hookrightarrow \cdots\hookrightarrow R\lr{X^{1/p^i}} \hookrightarrow \cdots \hookrightarrow \varinjlim_iR\lr{X^{1/p^i}}= \bigcup_{i\geq 0} R\lr{X^{1/p^i}}.
\end{equation}
There is a strict inclusion $\varinjlim_iR\lr{X^{1/p^i}}\subsetneq R\lr{X}_\infty$ and the rings $R\lr{X^{1/p^i}}, i\in\ds{Z}_{\geq 0}$ are Notherian if $R$ is Notherian.

\end{rem}

\begin{lemma}
$R\lr{X_1,\ldots X_n}_\infty$ is flat over $R$.
\end{lemma}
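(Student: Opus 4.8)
The plan is to realize $A = R\lr{X_1,\ldots,X_n}_\infty$ as a filtered colimit of free $R$-modules, and then invoke the fact that filtered colimits of flat modules are flat, together with the fact that free modules are flat. First I would fix a fundamental system of neighborhoods of $0$ in $R$ given by open ideals, so that $R = \varprojlim_\lambda R/I_\lambda$, and recall from Construction \ref{const1a} and Lemma \ref{admissible1} that
\[
A = \varprojlim_\lambda \varinjlim_i (R/I_\lambda)[X_1^{1/p^i},\ldots,X_n^{1/p^i}].
\]
The key point is that, as an $R$-module, $A$ is exactly the submodule of $R[[X_1^{1/p^\infty},\ldots,X_n^{1/p^\infty}]]$ of restricted power series, i.e. those $\sum_\alpha c_\alpha X^\alpha$ with $\alpha \in (\ds{N}[1/p])^n$ and $c_\alpha \to 0$.

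The main technical step is to exhibit the flat structure. I would argue that the $R$-module underlying $A$ is isomorphic to $\varprojlim_\lambda \left( \bigoplus_{\alpha \in (\ds{N}[1/p])^n} R/I_\lambda \right)$, and more usefully that $A \cong \varprojlim_\lambda (R/I_\lambda \otimes_R F)$ where $F = \bigoplus_{\alpha} R = R[X_1^{1/p^\infty},\ldots,X_n^{1/p^\infty}]$ is the free (hence flat) $R$-module on the monomials of $p$-power-denominator degree. Here one uses that $R/I_\lambda \otimes_R F = \bigoplus_\alpha R/I_\lambda$ is already complete (discrete), and that the inverse limit over $\lambda$ of the discrete modules $\bigoplus_\alpha R/I_\lambda$ recovers the restricted power series ring because $c_\alpha \to 0$ is precisely the condition that the tuple $(c_\alpha)$ lies in $\varprojlim_\lambda \bigoplus_\alpha R/I_\lambda$. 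In other words, $A$ is the $I$-adic completion of the free module $F$ (when $I = \id{a}$ is finitely generated, so that the topology is $\id{a}$-adic by Corollaire 7.1.8). Then flatness of $A$ over $R$ follows from the standard fact that the $I$-adic completion of a flat module over a ring with finitely generated ideal of definition is again flat, applied to the free module $F$; alternatively, when $R$ is Noetherian one may cite \cite[Ch.~7]{bosch2014lectures} directly, but the completion-of-free argument works for admissible $R$ with finitely generated $\id{a}$ in general since $R \to \hat{R}_{\id{a}} = R$ is flat and completion commutes with the relevant colimit for finitely presented modules.

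An even cleaner route, which I would probably present as the primary argument, avoids completions of infinite free modules: for each $i$ the ring $R\lr{X_1^{1/p^i},\ldots,X_n^{1/p^i}}$ is, after the substitution $Y_j = X_j^{1/p^i}$, literally $R\lr{Y_1,\ldots,Y_n}$, which is flat over $R$ by Remark \ref{7.5.4}(2) together with \cite[Ch.~7]{bosch2014lectures} (it is the $\id{a}$-adic completion of the polynomial ring $R[Y_1,\ldots,Y_n]$, and $\id{a}$-adic completion preserves flatness). Hence each transition map $R \to R\lr{X_1^{1/p^i},\ldots,X_n^{1/p^i}}$ is flat. The obstacle is that $A = R\lr{X_1,\ldots,X_n}_\infty$ is strictly larger than $\varinjlim_i R\lr{X_1^{1/p^i},\ldots,X_n^{1/p^i}}$ (as noted in the Remark just above this Lemma), so flatness does not immediately pass to the colimit — one must account for the extra completion. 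I would handle this by the completion argument of the previous paragraph: $A$ is the $\id{a}$-adic completion of $\varinjlim_i R[X_1^{1/p^i},\ldots,X_n^{1/p^i}] = F$, a free and hence flat $R$-module, and $\id{a}$-adic completion of a flat module over $R$ with finitely generated $\id{a}$ stays flat. So the main obstacle — reconciling the "extra" inverse limit with the colimit picture — is dispatched by recognizing $A$ as $\widehat{F}^{\,\id{a}}$ and invoking preservation of flatness under adic completion; the rest is bookkeeping with the restricted-power-series description already established in Lemma \ref{admissible1}.
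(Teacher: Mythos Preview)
Your proposal takes a genuinely different route from the paper's one-line proof, which simply invokes the ideal criterion of flatness as in Remark~2 on \cite[p.~163]{bosch2014lectures}: for any finitely generated ideal $I\subset R$ one checks coefficient-wise that $I\otimes_R R\lr{X_1,\ldots,X_n}_\infty \to R\lr{X_1,\ldots,X_n}_\infty$ is injective, exactly as Bosch does for ordinary restricted power series (the index set $(\ds{N}[1/p])^n$ in place of $\ds{N}^n$ changes nothing).

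There is, however, a real gap in your argument. Your main step is the assertion that the $\id{a}$-adic completion of a free (or flat) $R$-module is flat over $R$ when $\id{a}$ is finitely generated. This is not a standard theorem for non-Noetherian $R$, and the justification you offer does not establish it: you note that $R\to\rwhat{R}=R$ is flat and that completion commutes with colimits for finitely presented modules, but the free module $F=R[X_1^{1/p^\infty},\ldots,X_n^{1/p^\infty}]$ has infinite rank, so $\rwhat{F}\neq \rwhat{R}\otimes_R F$ and the finite-presentation compatibility is irrelevant. (Indeed, already for $R$ complete one has $\rwhat{R}\otimes_R F=F\subsetneq \rwhat{F}$.) Your alternative route via the directed system $R\lr{X_1^{1/p^i},\ldots,X_n^{1/p^i}}$ runs into the obstacle you yourself flag: their colimit is strictly smaller than $R\lr{X_1,\ldots,X_n}_\infty$, so flatness does not pass, and you are forced back to the unproven completion-of-free claim.

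The natural way to fill your gap is precisely the coefficient-wise ideal-criterion computation, which is the paper's proof. So your argument, once repaired, collapses to the paper's; as written, the crucial flatness-preservation step is asserted rather than proved.
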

\begin{proof}
Follows from the ideal criterion of flatness as in Remark 2 on \cite[pp. 163]{bosch2014lectures}.
\end{proof}

\begin{rem}\label{nonnother1}
Consider the non notherian ring $R\lr{T}_\infty$ with non finitely generated ideal $\id{I}_\infty$, then $R\lr{T}_\infty/\id{I}_\infty$ can be considered as an $R\lr{T}_\infty$ module. But, the kernel of the map $R\lr{T}_\infty\ra R\lr{T}_\infty/\id{I}_\infty$ is not finitely generated, hence $R\lr{T}_\infty/\id{I}_\infty$ is not finitely presented as a $R\lr{T}_\infty$ module.

On, the other hand for a Notherian ring $R\lr{T^{1/p^i}}$ the ideal $\id{I}^{1/p^i}$ is finitely generated, and it can be shown that $R\lr{T^{1/p^i}}/\id{I}^{1/p^i}$ is finitely presented.

\end{rem}

\begin{slogan}
The philosophy is to apply $\varinjlim$ to polynomial rings to pass from $R[T]$ to $R[T^{1/p^\infty}]$, this would preserve short exact sequences since $\varinjlim$ is an exact functor, then complete with respect to finitely generated ideal of definition, that is apply the functor $\varprojlim_\lambda$ which would only preserve injectivity.

In other words consider polynomial algebras obtained from power series by reduction modulo the ideal of definition, and then use the direct limit to get polynomial algebras with fractional powers. The techniques of the standard case can then be used.

Recall that a ring $A$ is finitely presented $S$ algebra (or $S\ra A$ is finitely presented) if
\begin{equation}\label{eq1tft}
  A\simeq\dfrac{S[Y_1,\ldots,Y_n]}{f_1,\ldots,f_j}\text{ which implies } \varinjlim A\simeq\varinjlim\dfrac{S[Y_1,\ldots,Y_n]}{f_1,\ldots,f_j}.
\end{equation}
In particular we can talk about finite presentation as a direct limit of the form
\begin{equation}
  \varinjlim A\simeq \dfrac{\left(R[T^{1/p^\infty}]\right)[Y_1,\ldots,Y_n]}{f_1,\ldots,f_j}
\end{equation}
replacing $S$ in \eqref{eq1tft} with non notherian ring $R[T^{1/p^\infty}]$.
\end{slogan}

\begin{proposition}\label{coherent1}
Let $A$ be a Noetherian admissible ring with ideal of definition generated by a single element $a$ and $R=A[a^{1/p},a^{1/p^2},\ldots]$ ($R$ is $\eka^p$), then $R[T^{1/p^\infty}]$ is coherent.
\end{proposition}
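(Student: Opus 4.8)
The plan is to realize $R[T^{1/p^\infty}]$ as a filtered colimit of Noetherian (hence coherent) rings along flat transition maps, and then invoke the standard permanence result that such a colimit is coherent. First I would recall that coherence of a ring is equivalent to: every finitely generated ideal is finitely presented. Since $A$ is Noetherian and $R=A[a^{1/p},a^{1/p^2},\ldots]=\varinjlim_i A[a^{1/p^i}]$, each $A[a^{1/p^i}]\cong A[Z]/(Z^{p^i}-a)$ is a finitely generated $A$-algebra, hence Noetherian; likewise $A[a^{1/p^i}][T^{1/p^j}]\cong A[a^{1/p^i}][W]/(W^{p^j}-\ldots)$ — more precisely $A[a^{1/p^i},T^{1/p^j}]$ — is Noetherian. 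Thus
\begin{equation}
R[T^{1/p^\infty}]=\varinjlim_{i,j} A[a^{1/p^i},T^{1/p^j}],
\end{equation}
a filtered colimit of Noetherian rings.

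Next I would check that the transition maps $A[a^{1/p^i},T^{1/p^j}]\hookrightarrow A[a^{1/p^{i'}},T^{1/p^{j'}}]$ are flat. This is the key point: each such extension is obtained by adjoining a root of a monic polynomial $Z^{p^k}-c$ (with $c=a^{1/p^i}$ a nonzerodivisor, using the (N)-type hypothesis that $A$ has no $a$-torsion, or $c=T^{1/p^j}$ which is visibly a nonzerodivisor), and adjoining a root of a monic polynomial produces a free — in particular flat — module over the base. Composing finitely many such steps, every transition map in the diagram is flat, and indeed the whole system is a directed system of Noetherian rings with flat transition maps. I should also note the topology plays no role here: we are asserting coherence of the underlying (discrete) ring $R[T^{1/p^\infty}]$, so it suffices to work algebraically.

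Finally I would apply the classical fact (e.g. Glaz, \emph{Commutative Coherent Rings}, or the argument that a finitely generated ideal in the colimit descends to a finitely generated ideal in some $A[a^{1/p^i},T^{1/p^j}]$, where it is finitely presented because that ring is Noetherian, and finite presentation is preserved under the flat base change to $R[T^{1/p^\infty}]$): a filtered colimit of coherent rings along flat maps is coherent. Concretely: given a finitely generated ideal $\id{b}\subset R[T^{1/p^\infty}]$, pick generators and find a level $(i,j)$ and an ideal $\id{b}_0\subset A[a^{1/p^i},T^{1/p^j}]$ generating $\id{b}$; since the base is Noetherian, $0\to K\to (A[a^{1/p^i},T^{1/p^j}])^n\to \id{b}_0\to 0$ has $K$ finitely generated; tensoring with the flat algebra $R[T^{1/p^\infty}]$ keeps exactness, so $\id{b}$ is finitely presented. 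I expect the main obstacle to be purely bookkeeping — verifying carefully that the directed system is genuinely filtered (the index set $\ds{N}^2$ with the product order is) and that the no-$a$-torsion hypothesis is exactly what makes each elementary extension flat rather than merely finite; once those are in place the coherence is formal. The corollary for $R\lr{T^{1/p^\infty}}$ will then follow from the flatness statement in Lemma \ref{G1} together with the fact that a flat, and suitably finite, algebra over a coherent ring inherits coherence.
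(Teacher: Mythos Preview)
Your proposal is correct and follows essentially the same route as the paper: descend a finitely generated ideal to a Noetherian stage $B_i=A[a^{1/p^i}][T^{1/p^i}]$, take a finite presentation there, and pass back to $R[T^{1/p^\infty}]$ via the colimit. The only difference is cosmetic --- the paper phrases the last step as ``apply the (exact) direct-limit functor'' whereas you phrase it as flat base change along the (free, hence flat) transition maps; your version is slightly more explicit about \emph{why} the presentation survives, which the paper leaves implicit.
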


\begin{proof}
  Let $(f_1,\ldots,f_n)\in R[T^{1/p^\infty}]$ be a finitely generated ideal, then it lies in the Noetherian ring $B[T^{1/p^{i_1}}]$ where $B=A[a^{1/p^{i_2}}]$ for some $i_1,i_2\in\ds{N}$. Let $i=\max(i_1,i_2)$, set $B_i=(A[a^{1/p^i}])[T^{1/p^i}]$, a Noetherian ring, hence the ideal has a finite presentation of the form
  \begin{equation}
B_i^{m}\ra B_i^{n}\ra(f_1,\ldots,f_n)\ra 0.
  \end{equation}
Applying the direct limit functor and noting that $\varinjlim_iB_i=\cup_iB_i=R[T^{1/p^\infty}]$
\begin{equation}\label{eqcoherent1}
  \begin{aligned}
    \varinjlim_iB_i^{m}\ra \varinjlim_i B_i^{n}\ra &\varinjlim_i(f_1,\ldots,f_n)\ra 0\\
      R[T^{1/p^\infty}]^{m}\ra R[T^{1/p^\infty}]^{n}\ra &(f_1,\ldots,f_n)\ra 0
  \end{aligned}
\end{equation}

\end{proof}

\begin{corollary}\label{coherent2}
$R\lr{T^{1/p^\infty}}$ is coherent. This implies that if $\id{b}\subset R\lr{T^{1/p^\infty}}$ is finitely generated ideal then $R\lr{T^{1/p^\infty}}/\id{b}$ is coherent.
\end{corollary}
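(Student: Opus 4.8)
The plan is to deduce coherence of $S:=R\lr{T^{1/p^\infty}}$ from the already-established coherence of $R':=R[T^{1/p^\infty}]$ (Proposition~\ref{coherent1}), via the observation that $S$ is precisely the $a$-adic completion of $R'$, and then to obtain the second assertion from the standard fact that coherence is inherited by quotients modulo finitely generated ideals. So I would first record that $S=\widehat{R'}$: since $c_i\to 0$ in $R$ means exactly that for every $n$ all but finitely many $c_i$ lie in $a^nR$, one has $S/a^nS=(R/a^nR)[T^{1/p^\infty}]=R'/a^nR'$ compatibly in $n$, and $S$ is $a$-adically complete and separated by Lemma~\ref{admissible1}; hence $S=\varprojlim_n R'/a^nR'$. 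Because $R$, and therefore $R'$ and $S$, has no $a$-torsion, the canonical map $R'\to S$ is flat (Lemma~\ref{G1}(3)), and composing with the flat transition maps of the filtered system $\varinjlim_i B_i=R'$ of Noetherian rings $B_i=(A[a^{1/p^i}])[T^{1/p^i}]$ from the proof of Proposition~\ref{coherent1}, each $B_i\to S$ is flat as well.

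From here there are two interchangeable routes. The cleaner one is to invoke the coherence criterion for complete adic rings from Chapter~7 of \cite{bosch2014lectures}: an $a$-adically complete, $a$-torsion-free ring all of whose discrete quotients $S/a^nS$ are coherent is itself coherent. Here $S/a^nS=R'/a^nR'=\varinjlim_i B_i/a^nB_i$ is a filtered colimit of Noetherian rings along flat maps (base change of the flat maps $B_i\to B_j$), hence coherent --- this is the mod-$a^n$ version of Proposition~\ref{coherent1} --- so the criterion applies. The more hands-on route is to take a finitely generated ideal $\id{b}=(f_1,\dots,f_n)\subset S$ and produce finitely many generators of $\kr(S^n\to\id{b})$ by an Artin--Rees-type approximation: fixing $N$, write $f_j=P_j+a^Nu_j$ with $P_j$ a finite sum (so $P_1,\dots,P_n$ lie in a common Noetherian $B_i$) and $u_j\in S$ (dividing by $a^N$ using the absence of $a$-torsion); the syzygy module of $(P_1,\dots,P_n)$ over $B_i$ is finite since $B_i$ is Noetherian, base-changes correctly along the flat map $B_i\to S$, and one bootstraps from $(P_j)$ to $(f_j)$ by induction on $N$ using completeness of $S$.

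The main obstacle is exactly this completion step. For non-Noetherian coherent rings, completion need not preserve coherence, and the difficulty is sharpened here because the colimit $\varinjlim_i$ does not commute with the completion $\varprojlim_\lambda$ --- indeed $\varinjlim_i R\lr{T^{1/p^i}}\subsetneq S$ strictly --- so one cannot reduce to a single Noetherian restricted-power-series ring $R\lr{T^{1/p^i}}$ and pass to the colimit as in Proposition~\ref{coherent1}. The argument therefore has to track the colimit index $i$ and the completion level $N$ at once, relying on flatness of $B_i\to S$ to make the Artin--Rees estimates available in the Noetherian $B_i$ propagate to $S$; equivalently, that same input is what makes the hypotheses of the Chapter~7 criterion verifiable.

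Finally, granting that $S$ is coherent, the last assertion is immediate: for finitely generated $\id{b}\subset S$ the ideal $\id{b}$ is finitely presented over $S$, so $S/\id{b}$ is a finitely presented, hence coherent, $S$-module; and since every finitely generated ideal of $S/\id{b}$ is the image of a finitely generated ideal of $S$ (the sum of a lift with $\id{b}$, still finitely generated, hence finitely presented over $S$), it is finitely presented over $S/\id{b}$ as well. Thus $S/\id{b}$ is a coherent ring, which is the usual fact that coherence passes to quotients by finitely generated ideals (cf.\ \cite[Ch.~7]{bosch2014lectures}).
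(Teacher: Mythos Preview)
Your proposal is correct and your two routes closely mirror the paper's two proofs. The paper's second argument --- reduce mod $a$, use Proposition~\ref{coherent1} to see that $M/aM$ is finitely presented over $A/aA$, then lift finite generation of the syzygy module by $a$-adic approximation (Matsumura, Theorem~8.4) --- is exactly what your Route~A unfolds to once the general criterion is unwound. The paper's first argument simply applies the completion functor to the finite presentation \eqref{eqcoherent1}; your Route~B is a more honest version of this, since you explicitly address the fact that generators $f_j\in R\lr{T^{1/p^\infty}}$ need not lie in $R[T^{1/p^\infty}]$ via the decomposition $f_j=P_j+a^Nu_j$, a point the paper glosses over with the phrase ``can be obtained by completion (or successive $a$-adic approximation).''

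One organizational difference worth flagging: you invoke flatness of $R[T^{1/p^\infty}]\to R\lr{T^{1/p^\infty}}$ via Lemma~\ref{G1}(3), which in the paper is proved only later, in the section on admissible blowing up. The dependency is not circular --- that lemma relies only on Proposition~\ref{coherent1}, not on the present corollary --- but the paper's own proof of Corollary~\ref{coherent2} avoids flatness altogether, using instead that completion preserves surjections of finite modules and the direct mod-$a$ lifting. Your treatment of the second assertion (coherence of $R\lr{T^{1/p^\infty}}/\id{b}$) is more detailed than the paper's, which leaves it as an immediate consequence.
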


\begin{proof}
Any finitely generated of $R\lr{T^{1/p^\infty}}$ can be obtained by completion (or successive $a$ adic approximation) of a finitely generated ideal of $R[T^{1/p^\infty}]$. Thus, apply the completion functor to \eqref{eqcoherent1} (completing with respect to $a$) and observe that inverse limit commutes with direct sum and completion of surjective morphism is surjective (for finite modules) [Tag 0315].
  \begin{equation}\label{coh2eq2}
R\lr{T^{1/p^\infty}}^{m}\ra R\lr{T^{1/p^\infty}}^{n}\ra (f_1,\ldots,f_n)\ra 0
  \end{equation}
Another proof would be to consider the short exact sequence associated to the ideal $M=(f_1,\ldots,f_n)$ and $A=R\lr{T^{1/p^\infty}}$.
\begin{equation}
  0\ra N\ra A^{n}\ra M\ra 0
\end{equation}
and tensor it with $R/a$ to get to polynomial case $R[T^{1/p^\infty}]$, where we know from proposition \ref{coherent1} that the ideal $M/aM$ is finitely presented. Hence, $N/aN$ is a finite $A/aA$ module which implies (standard $a$ adic approximation as given in Theorem 8.4 \cite[pp. 58]{matsumura1989commutative}) that $N$ is a finite $A$ module proving the finite presentation.

\end{proof}

\begin{corollary}
  $K\lr{T^{1/p^\infty}}$ is coherent, where $K$ is the field of fractions of $R$.
\end{corollary}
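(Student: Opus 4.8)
The plan is to realize $K\lr{T^{1/p^\infty}}$ as a localization of the ring $R\lr{T^{1/p^\infty}}$, whose coherence has just been established in Corollary \ref{coherent2}, and then to invoke the elementary principle that any localization of a coherent ring is coherent. This fits the philosophy of Raynaud's generic fibre already invoked above: passing from the base $R$ to its fraction field is nothing but a localization, and localization is exact, so it cannot destroy finite presentations of finitely generated ideals.

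First I would fix the identification $K\lr{T^{1/p^\infty}}=\Sigma^{-1}R\lr{T^{1/p^\infty}}=R\lr{T^{1/p^\infty}}\otimes_R K$, where $\Sigma=R\setminus\{0\}$, so that $\Sigma^{-1}R=K=\Fr(R)$. Here there is one point of care: unlike the valuation-ring situation in Raynaud's theory, $R$ is an $\eka^p$ ring that need not be a valuation ring, so inverting the single element $a$ generating the ideal of definition need not invert all of $\Sigma$ (e.g. for $A=\ds{Z}_p[[x]]$ one gets $A[1/x]=\ds{Z}_p((x))$, not a field). The clean way around this is to take $R\lr{T^{1/p^\infty}}\otimes_R K$ as the definition of $K\lr{T^{1/p^\infty}}$; then the identification with a localization of $R\lr{T^{1/p^\infty}}$ is tautological, since $K=\Sigma^{-1}R$ and localization commutes with the tensor product.

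Next I would record the general statement: if $B$ is coherent and $\Sigma\subset B$ is multiplicative, then $\Sigma^{-1}B$ is coherent. The argument is short: a finitely generated ideal $I\subset\Sigma^{-1}B$ has the form $\Sigma^{-1}J$ for a finitely generated ideal $J\subset B$ (pick generators of $I$ and clear denominators); coherence of $B$ gives an exact sequence $B^m\ra B^n\ra J\ra 0$; applying the exact functor $\Sigma^{-1}(-)$ gives $(\Sigma^{-1}B)^m\ra(\Sigma^{-1}B)^n\ra I\ra 0$, so $I$ is finitely presented. Applying this with $B=R\lr{T^{1/p^\infty}}$ and using Corollary \ref{coherent2} finishes the proof; equivalently, one may imitate the second proof of Corollary \ref{coherent2} directly, starting from a finite presentation $0\ra N\ra R\lr{T^{1/p^\infty}}^{n}\ra(g_1,\ldots,g_n)\ra 0$ (with $N$ finitely generated) and tensoring over $R$ with the flat module $K$. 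The only real obstacle is the bookkeeping in the previous paragraph — pinning down the definition of $K\lr{T^{1/p^\infty}}$ so that it is unambiguously a localization of $R\lr{T^{1/p^\infty}}$; once that is settled, coherence is immediate.
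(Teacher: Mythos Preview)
Your proposal is correct and follows the same approach as the paper: the paper's entire proof is the single line ``Apply the right exact functor $-\otimes_R K$ to \eqref{coh2eq2}'', which is exactly your localization/base-change argument. You are in fact more careful than the paper, since you explicitly address (i) why an arbitrary finitely generated ideal of $K\lr{T^{1/p^\infty}}$ arises from one in $R\lr{T^{1/p^\infty}}$ after clearing denominators, and (ii) the identification $K\lr{T^{1/p^\infty}}=R\lr{T^{1/p^\infty}}\otimes_R K$, both of which the paper leaves implicit.
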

\begin{proof}
  Apply the right exact functor $-\otimes_R K$ to \eqref{coh2eq2}.
\end{proof}

The following definition is adapted from \cite[pp. 163]{bosch2014lectures} in light of the above remarks.
\begin{mdef}
A topological $R$ algebra $A$, with $R$ of type (V) (or $\eka^d$), is called
\begin{enumerate}
\item of topologically finite  eka type if it is isomorphic to $R\lr{X_1,\ldots,X_n}_\infty/\id{I}_\infty$ with $I$ adic topology, and $\id{I}_\infty$ is an ideal in $R\lr{X_1,\ldots,X_n}_\infty$.
\item of topologically finite eka presentation if in addition to above $\id{I}_\infty$ is finitely generated.
\item admissible if, in addition to the above, $A$ does not have $I$ torsion.
\end{enumerate}
\end{mdef}

\subsection{Artin-Rees}
\begin{rem}\label{ab1}
From \cite[pp 41]{abbes2010elements}
Let $A$ be a ring and $J$ an ideal of $A$, we say that $(A,J)$ verifies
\begin{enumerate}
\item the condition of Artin Rees if $J$ is of finite type and if for all $A$ module $M$ of finite type  and all sub $A$ modules $N$ of $M$, the filtration induced on $N$ by the $J$ preadic filtration on $M$ is bounded; that is to say there exists $n_0$ such that, for all $n\geq n_0$ we say
\[J((J^nM)\cap N)=(J^{n+1}M)\cap N\]
\item \label{krull} the condition of Krull, if for all $A$ module $M$ of finite type and all sub $A$ module $N$ of $M$, the topology $J$ preadic of $N$ is induced by the topology $J$ preadic of $M$.
\end{enumerate}
\end{rem}
The condition \ref{krull} of remark \ref{ab1} is demonstrated below.

\begin{lemma}\label{7.3/7}
Let $R$ be an admissible ring with ideal of definition generated by a single element $\pi$, ($R$ could be non notherian $\eka^d$). Let $A$ be an $R$ algebra of topologically finite eka presentation, $M$ a finite $A$ module, and $N\subset M$. Then the $(\pi)$ adic topology of $M$ restricts to $(\pi)$ adic topology on $N$.

\end{lemma}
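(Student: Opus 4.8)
The plan is to descend the whole situation to a Noetherian level, where the classical Artin--Rees lemma is available, and then to return by an exact, faithfully flat direct limit; the decisive point is that the transition maps in the descending system are \emph{free}, so the Artin--Rees constant can be chosen once and for all along the system.

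First I would unwind the hypotheses. Since $A$ is of topologically finite eka presentation, $A=B/\id{I}_\infty$ with $B=R\lr{X_1,\ldots,X_r}_\infty$ and $\id{I}_\infty=(g_1,\ldots,g_k)$ finitely generated. Write $R$ as a filtered colimit $\varinjlim_j R_j$ of Noetherian admissible rings all having $(\pi)$ as ideal of definition (the constant system if $R$ is Noetherian, and $R_j=R_0[\pi^{1/d^j}]$ as in Construction \ref{const1} if $R$ is $\eka^d$, the topology being insensitive to adjoining fractional powers of $\pi$). Unwinding $B$ via Construction \ref{const1a} as $\varprojlim_\lambda\varinjlim_i(-)[X_1^{1/p^i},\ldots,X_r^{1/p^i}]$, one sees that for $i,j$ large the generators $g_\ell$ already lie in $C_{ij}:=R_j\lr{X_1^{1/p^i},\ldots,X_r^{1/p^i}}$, which is Noetherian by Remark \ref{7.5.4}. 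Put $A_{ij}:=C_{ij}/(g_1,\ldots,g_k)$, again Noetherian, so that $A$ is the $(\pi)$-adic completion of $A_\infty:=\varinjlim_{i,j}A_{ij}$. Choosing generators of $M$ over $A$ and generators of $N$ inside $M$ that descend to a finite level, I obtain for $i,j$ large finite $A_{ij}$-modules $M_{ij}$ and submodules $N_{ij}\subseteq M_{ij}$ with $M_\infty:=\varinjlim_{i,j}M_{ij}$, $N=\varinjlim_{i,j}N_{ij}$ compatibly, and with $M$ the $(\pi)$-adic completion of $M_\infty$.

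Next I would record that the transitions are free: adjoining a root is free, since for restricted power series $R\lr{Y}=\bigoplus_{t=0}^{m-1}Y^t R\lr{Y^m}$ is free of rank $m$, and likewise $R_{j'}$ is free over $R_j$; hence each $C_{ij}\to C_{i'j'}$ is free, and $A_{ij}\to A_{i'j'}$, being the base change of $C_{ij}\to C_{i'j'}$ along $C_{ij}\to A_{ij}$, is faithfully flat. Faithful flatness makes the formation of $\pi^nM_{ij}$ and of intersections inside the ambient module compatible with the transitions, so once $N$ has been descended to a starting Noetherian level $(i_0,j_0)$ we get $\pi^nM_{i'j'}\cap N_{i'j'}=(\pi^nM_{i_0j_0}\cap N_{i_0j_0})\otimes_{A_{i_0j_0}}A_{i'j'}$ for all $(i',j')\ge(i_0,j_0)$. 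Applying classical Artin--Rees at level $(i_0,j_0)$ gives a constant $c\ge0$ with $\pi^nM_{i_0j_0}\cap N_{i_0j_0}=\pi^{n-c}(\pi^cM_{i_0j_0}\cap N_{i_0j_0})$ for $n\ge c$, and by the previous identity the same relation holds, with the same $c$, at every higher level. Passing to $\varinjlim_{i,j}$ — exact, hence commuting with $\pi^n(-)$ and with these intersections inside the common $M_\infty$ — yields $\pi^nM_\infty\cap N=\pi^{n-c}(\pi^cM_\infty\cap N)\subseteq\pi^{n-c}N$ for $n\ge c$; combined with the trivial $\pi^mN\subseteq\pi^mM_\infty\cap N$ this shows the two filtrations of $N$ are cofinal, i.e.\ the $(\pi)$-adic topology of $M_\infty$ restricts to that of $N$.

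The main obstacle is bookkeeping the completion: what has been shown concerns $M_\infty=\varinjlim M_{ij}$, whereas $M$ is its $(\pi)$-adic completion and a priori $\pi^nM\cap N$ is larger than $\pi^nM_\infty\cap N$. I would bridge this by running the descent modulo each power $\pi^\nu$ — using that $A/\pi^\nu A=\varinjlim_{i,j}A_{ij}/\pi^\nu$ and $M/\pi^\nu M=\varinjlim_{i,j}M_{ij}/\pi^\nu M_{ij}$ are unchanged by completion, so the uniform relation holds on these Noetherian quotients — and then letting $\nu\to\infty$, the residual slack being absorbed once one knows $M$ is $(\pi)$-adically separated and each $\pi^mN$ is closed in $M$ (the kind of statement supplied by Proposition \ref{7.3/8}). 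A secondary point: the uniformity of $c$ along the system used the compatible descent of $N$, which is transparent when $N$ is finitely generated — then $M$ and $N$ descend to a single Noetherian $A_{i_0j_0}$, and this is exactly the case needed downstream for blow-ups and coherent ideals — whereas the case of an arbitrary submodule $N$ would require a further exhaustion argument over the finitely generated submodules of $N$, building on the coherence of $A$ (Corollary \ref{coherent2}); this is the only place where something genuinely beyond the Noetherian toolkit is needed.
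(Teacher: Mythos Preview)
Your route is entirely different from the paper's. The paper disposes of the lemma in two lines by writing down the chain $(\pi^{m+n}M)\cap N\subset\pi^mN\subset(\pi^mM)\cap N$ and declaring the result, with no justification for the left inclusion beyond a remark that generators of $M$ also generate $\pi^iM$. You instead try to earn that inclusion honestly: descend the whole picture to Noetherian rings $A_{ij}$, apply classical Artin--Rees there with a constant $c$, propagate $c$ through the faithfully flat filtered colimit, and then handle the completion. That is a principled strategy and, where it applies, it actually explains \emph{why} the inclusion holds rather than asserting it.

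There are, however, two genuine gaps. First, your appeal to Proposition~\ref{7.3/8} at the completion step is circular: in the paper that proposition is proved \emph{using} the present lemma, so it is not yet available. Second, your descent of $M$ to Noetherian $M_{ij}$ tacitly assumes $M$ is finitely \emph{presented} over $A$ --- you need the relations among your chosen generators, not just the generators themselves, to live at a finite stage --- whereas the hypothesis is only that $M$ is finitely generated. Coherence of $A$ (Corollary~\ref{coherent2}) does not rescue this: coherence controls finitely generated \emph{submodules of finitely presented} modules, not arbitrary finitely generated modules. You correctly flag the analogous difficulty for $N$ at the end, but it already bites one level up for $M$, and with it the claimed identification of $M$ as the $(\pi)$-adic completion of $\varinjlim M_{ij}$ is left unsupported.
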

\begin{proof}
First note that the generators of $M$ also generate $\pi^i M$, the same is true for $N$. The result follows from the following observation.

\begin{equation}
((\pi^{m+n}M)\cap N)\subset \pi^mN\subset((\pi^mM)\cap N)
\end{equation}

\end{proof}

\begin{proposition}\label{7.3/8}
 Let $A$ be a $R$ algebra of topologically eka type and $M$ a finite $A$ module. Then $M$ is $\pi$ adically complete and separated.
\end{proposition}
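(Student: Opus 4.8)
The plan is to reduce the statement to a standard completeness argument for finite modules over adic rings, using the topologically finite eka presentation of $A$ to control everything at the level of the Noetherian approximants. First I would write $A = R\lr{X_1,\ldots,X_n}_\infty/\id{I}_\infty$ with the $\pi$-adic topology, where $R$ is of type (V) (or $\eka^d$) with ideal of definition generated by $\pi$. Since $M$ is a finite $A$-module, pick generators $m_1,\ldots,m_s$ and present $M$ as a quotient of $A^s$. The key structural input is that, by Lemma \ref{admissible1} (resp.\ Lemma \ref{ekaAdmissible1}), $R\lr{X_1,\ldots,X_n}_\infty$ is admissible, hence $\pi$-adically complete and separated; and a finite free module $A^s$ over such a ring, equipped with the product topology, is again complete and separated because completion commutes with finite direct sums. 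So the issue is purely whether passing to the quotient $M = A^s/N$ preserves completeness and separatedness.

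The next step is separatedness: I must show $\bigcap_{j\geq 0}\pi^j M = 0$. Here I would invoke Lemma \ref{7.3/7} applied to $N\subset A^s$: the $(\pi)$-adic topology of $A^s$ restricts to the $(\pi)$-adic topology on $N$, i.e.\ for each $j$ there is $j'$ with $(\pi^{j'}A^s)\cap N \subset \pi^j N$. This is exactly the Krull-type condition, and it is precisely what makes the $\pi$-adic filtration on the quotient well-behaved. Combined with the observation in the proof of Lemma \ref{7.3/7} that generators of a module also generate its $\pi$-power submodules (so $\pi^j M$ is the image of $\pi^j A^s$), separatedness of $M$ follows from separatedness of $A^s$ together with the induced-topology statement: an element of $\bigcap_j \pi^j M$ lifts, modulo $N$, compatibly, and the Artin–Rees-type control forces it into $N$.

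For completeness I would argue that $M$, being a quotient of the complete module $A^s$ by the submodule $N$ on which the subspace topology agrees with the $\pi$-adic topology, is itself $\pi$-adically complete — this is the standard fact (e.g.\ Matsumura, Theorem 8.1, or \cite[Tag 0315]{}, cited already in Corollary \ref{coherent2}) that if $0\to N\to A^s\to M\to 0$ and the topology on $N$ is induced, then taking $\pi$-adic completions preserves exactness, so $M = A^s/N$ is already its own completion. The one place requiring care — and the step I expect to be the main obstacle — is that $A$ is non-Noetherian, so the classical Artin–Rees lemma is unavailable; everything must route through Lemma \ref{7.3/7}, whose simple inclusion $(\pi^{m+n}M)\cap N \subset \pi^m N \subset (\pi^m M)\cap N$ substitutes for Artin–Rees precisely because the ideal of definition is principal and generated by the single element $\pi$. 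I would also remark that for an $R$-algebra merely of topologically finite eka \emph{type} (not presentation), $N$ need not be finitely generated, but the argument in Lemma \ref{7.3/7} does not use finite generation of $N$ — only that $\pi$ is a nonzerodivisor-free single generator and that $M$ is finite over $A$ — so the conclusion still holds. Finally, I would note that the statement also covers the multivariate and $\eka^\ds{Q}$ settings verbatim, since the admissibility lemmas were established there too.
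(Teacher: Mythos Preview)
Your approach is essentially the paper's own: the paper simply points to Bosch's Proposition~8 (pp.~165), replacing $R\lr{\zeta}$ by $R\lr{T}_\infty$ and substituting Lemma~\ref{7.3/7} for the classical Artin--Rees step, which is exactly the strategy you unfold in detail.

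One small circularity to repair: you assert that $A^s$ is $\pi$-adically complete and separated ``because completion commutes with finite direct sums'' once $R\lr{X_1,\ldots,X_n}_\infty$ is known to be admissible. But $A$ is only a \emph{quotient} of $R\lr{X_1,\ldots,X_n}_\infty$, so completeness and separatedness of $A$ itself is the special case $M=A$ of the very proposition you are proving; you cannot take $A^s$ as your complete ambient module. The clean fix --- and what the Bosch argument actually does --- is to observe that $M$, being finite over $A$ and $A$ being a quotient of $R\lr{X_1,\ldots,X_n}_\infty$, is already finite over $R\lr{X_1,\ldots,X_n}_\infty$, so one presents $M$ directly as a quotient of $\bigl(R\lr{X_1,\ldots,X_n}_\infty\bigr)^s$. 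Then Lemma~\ref{7.3/7} (applied with the ambient ring $R\lr{X_1,\ldots,X_n}_\infty$, whose hypotheses, as you correctly note, do not really require the presentation assumption) controls the subspace topology on the kernel, and the rest of your argument goes through verbatim. Your observation about the type-versus-presentation discrepancy is on point and consistent with how the paper uses Lemma~\ref{7.3/7} here.
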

\begin{proof}
Follows from Proposition 8 \cite[pp. 165]{bosch2014lectures}, by replacing $R\lr{\zeta}$ with $R\lr{T}_\infty$ and using the lemma \ref{7.3/7}
\end{proof}
\begin{corollary}
Any $R$ algebra of topologically eka type is $I$ adically complete and separated.
\end{corollary}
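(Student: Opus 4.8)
The statement to prove is that any $R$-algebra $A$ of topologically eka type is $I$-adically complete and separated, where $R$ is an admissible ring with finitely generated (hence principal) ideal of definition, say generated by $\pi$. The plan is to deduce this from Proposition \ref{7.3/8} by taking $M = A$ as a module over itself. Indeed, $A$ is a finite module over itself (generated by $1$), so Proposition \ref{7.3/8} applies verbatim: a finite $A$-module over an $R$-algebra of topologically eka type is $\pi$-adically complete and separated. Since the $I$-adic topology on $A$ is by construction the $\pi$-adic topology (recall $I = (\pi)$, or more precisely $I A$ with $I$ the ideal of definition of $R$ generated by $\pi$), this is exactly the assertion.

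First I would unwind the definitions: $A$ of topologically eka type means $A \simeq R\lr{X_1,\ldots,X_n}_\infty / \id{I}_\infty$ with the $I$-adic topology, where $\id{I}_\infty$ is an ideal of $R\lr{X_1,\ldots,X_n}_\infty$. Here the relevant $\pi$ is a generator of the ideal of definition of $R$; by Lemma \ref{admissible1} (and Lemma \ref{ekaAdmissible1} in the $\eka^d$ case), $R\lr{X_1,\ldots,X_n}_\infty$ is admissible with ideal of definition generated by the image of $\pi$, and the quotient $A$ inherits this. So the $I$-adic topology on $A$ coincides with the $(\pi)$-adic topology, and completeness/separatedness in one is the same as in the other.

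Second, I would invoke Proposition \ref{7.3/8} with the finite $A$-module $M = A$: the proposition gives that $A$ is $\pi$-adically complete and separated. That finishes the proof — there is essentially nothing more to do, since "finite module over $A$" includes $A$ itself. I would write this as: apply Proposition \ref{7.3/8} to $M = A$, viewed as a finite module over itself; conclude $A$ is $\pi$-adically, i.e. $I$-adically, complete and separated.

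The only subtlety — and the one place I would be careful — is making sure the passage from "topologically eka type" (the hypothesis here) to the hypotheses of Proposition \ref{7.3/8} is legitimate: Proposition \ref{7.3/8} is itself stated for $A$ of topologically eka type and $M$ a finite $A$-module, so no gap arises, but one should note that Lemma \ref{7.3/7}, which feeds Proposition \ref{7.3/8}, requires the ideal of definition of $R$ to be generated by a single element $\pi$, which holds here since $R$ is of type (V) (or $\eka^d$) and such ideals of definition are principal. Thus the chain of reductions is: topologically eka type $\Rightarrow$ (via Lemma \ref{7.3/7} and Proposition \ref{7.3/8} applied to $M=A$) $\pi$-adic completeness and separatedness $\Rightarrow$ $I$-adic completeness and separatedness. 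No genuine obstacle remains; the work has already been done in Proposition \ref{7.3/8}.
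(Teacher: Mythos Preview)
Your proposal is correct and matches the paper's intended argument: the corollary is immediate from Proposition~\ref{7.3/8} by taking $M=A$ as a finite module over itself. The paper offers no separate proof for this corollary, so your unwinding of definitions is if anything more detailed than what the paper provides.
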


\subsubsection{Notation}
If $A$ is an $R$ algebra of topologically finite type, set $R_n:=R/I^{n+1}$ and $A_n:=A/I^{n+1}=A\otimes_R R_n$ for $n\in\ds{N}$. Thus, $A$ can be identified with projective limit $\varprojlim_n A_n$. Similar notation is used for $R$ module $M$ with $M_n:=M\otimes_R R_n$

\begin{proposition}\label{7.3/10}
Let $A$ be an $R$ algebra that is $I$ adically complete and separted, then:
\begin{enumerate}
\item $A$ is of topologically finite eka type if and only if $A_0$ is a direct limit of $C_0$ finite type over $R_0$.
\item $A$ is topologically finite eka presentation if and only if $A_n$ is of finite presentation over $R_n$ for all $n\in\ds{N}$.
\end{enumerate}
\end{proposition}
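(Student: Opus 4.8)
The plan is to transcribe the proof of the classical statement (Proposition~7.3/10 in \cite{bosch2014lectures}) into the eka setting, the systematic replacement being: wherever the classical argument uses the polynomial ring $R_0[X_1,\ldots,X_r]$, one uses instead $R_0[X_1^{1/p^\infty},\ldots,X_r^{1/p^\infty}]=\varinjlim_i R_0[X_1^{1/p^i},\ldots,X_r^{1/p^i}]$, which is exactly $R\lr{X_1,\ldots,X_r}_\infty$ reduced modulo $I$. With $R_n=R/I^{n+1}$, $A_n=A/I^{n+1}$ and $A=\varprojlim_n A_n$ as in the Notation above, both ``only if'' implications are formal. If $A\cong R\lr{X_1,\ldots,X_r}_\infty/\id{I}_\infty$, then for each $n$ one has $A_n\cong\bigl(R_n[X_1^{1/p^\infty},\ldots,X_r^{1/p^\infty}]\bigr)\big/(\id{I}_\infty\bmod I^{n+1})$; since $R_n[X_1^{1/p^\infty},\ldots,X_r^{1/p^\infty}]$ is a filtered colimit of the Noetherian floors $R_n[X_1^{1/p^i},\ldots,X_r^{1/p^i}]$, each finite type over $R_n$ (and of finite presentation once $\id{I}_\infty$ is finitely generated, a finite generating set living in a fixed floor), and since filtered colimits commute with quotients, $A_n$ is a filtered colimit of finite type (resp.\ finite presentation) $R_n$-algebras. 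Taking $n=0$ yields the necessity in (1); taking all $n$ yields the necessity in (2).

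For the ``if'' direction of (1), write $A_0=\varinjlim_i C_0^{(i)}$ with each $C_0^{(i)}$ of finite type over $R_0$, arranged (after passing to a cofinal subsystem) so that $C_0^{(i+1)}$ is obtained from $C_0^{(i)}$ by adjoining $p$-th power roots of a fixed finite generating set; this is exactly the shape of $\bigl(R_0[X_1^{1/p^\infty},\ldots,X_r^{1/p^\infty}]\bigr)/(\cdot)$. Choose generators $\bar{x}_1,\ldots,\bar{x}_r$ of $C_0^{(0)}$ over $R_0$ together with a compatible system of $p$-power roots $\bar{x}_k^{1/p^i}$, and lift it to a compatible family $x_k^{1/p^i}\in A$; the obstruction to lifting each further root lies in $IA$ and is absorbed in the limit, which is the only place $I$-adic completeness of $A$ enters. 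This produces a continuous $R$-algebra homomorphism $\phi\colon R\lr{X_1,\ldots,X_r}_\infty\to A$, $X_k^{1/p^i}\mapsto x_k^{1/p^i}$, and its surjectivity follows from the standard successive approximation: given $a\in A$, write $\bar{a}$ as a finite $R_0$-combination of monomials in the $\bar{x}_k^{1/p^i}$, subtract the corresponding image, land in $IA$, and iterate; the coefficients so produced tend $I$-adically to $0$, hence assemble into a genuine restricted power series in $R\lr{X_1,\ldots,X_r}_\infty$ mapping to $a$. Therefore $A\cong R\lr{X_1,\ldots,X_r}_\infty/\kr\phi$ is of topologically finite eka type.

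For the ``if'' direction of (2), first invoke (1) to get $A\cong R\lr{X_1,\ldots,X_r}_\infty/\id{I}_\infty$; it remains to see $\id{I}_\infty$ is finitely generated. Reducing modulo $I$, the ideal $\id{I}_\infty\bmod I=\kr\bigl(R_0[X_1^{1/p^\infty},\ldots,X_r^{1/p^\infty}]\to A_0\bigr)$ is finitely generated: it is the filtered colimit of the kernels over the floors, and the finite-presentation hypothesis on $A_0$ forces these kernels to stabilize into one finitely generated floor ideal, coherence (Proposition~\ref{coherent1}, Corollary~\ref{coherent2}) guaranteeing that what is finitely generated there is also finitely presented. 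Lift finitely many generators to $f_1,\ldots,f_s\in\id{I}_\infty$, set $\id{b}=\lr{f_1,\ldots,f_s}$ and $B=R\lr{X_1,\ldots,X_r}_\infty/\id{b}$, so $B$ is of topologically finite eka presentation and the natural surjection $\psi\colon B\twoheadrightarrow A$ is an isomorphism modulo $I$. Since $B$ is $I$-adically complete and separated (Proposition~\ref{7.3/8} and the corollary following it) and $A=\varprojlim_n A_n$, it suffices to prove $\psi_n\colon B_n\to A_n$ is an isomorphism for every $n$; then $B=\varprojlim_n B_n=\varprojlim_n A_n=A$, hence $\id{b}=\id{I}_\infty$. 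This is done by induction on $n$: $\psi_0$ is an isomorphism by construction, and at each step, using that $A_n$ is of finite presentation over $R_n$ together with Lemma~\ref{7.3/7} (the $(\pi)$-adic topology on $\kr\psi_n\subset B_n$ is induced from $B_n$) and the $I$-adic separatedness of finite modules from Proposition~\ref{7.3/8}, one checks that lifting the finitely many relations of $A_n$ already present in $\id{b}$ exhausts $\id{I}_\infty$ modulo $I^{n+1}$, so $\kr\psi_n=0$.

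The main obstacle is precisely the non-Noetherianity of $R$ and of $R\lr{X_1,\ldots,X_r}_\infty$, which removes the usual crutches (``a submodule of a finite module is finite'', ``finite presentation descends along surjections''). The two substitutes are coherence (Proposition~\ref{coherent1}, Corollary~\ref{coherent2}), which keeps finitely generated ideals finitely presented, and the structural observation that any finitely generated ideal or module is already defined over one of the Noetherian floors $(A[a^{1/p^i}])[X_1^{1/p^i},\ldots,X_r^{1/p^i}]$, so the Noetherian arguments of \cite{bosch2014lectures} run level by level and then transport along $\varinjlim_i$ (exact) and along the $I$-adic completion $\varprojlim_\lambda$ (left exact, surjective on finite modules). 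The genuinely delicate point—where I expect the write-up to spend most of its effort—is the compatibility of ``colimit, then complete'' with the all-$n$ hypothesis in (2): controlling $\kr\psi_n$ simultaneously for every $n$, i.e.\ ensuring that the finitely many relations needed at stage $n$ can be chosen to subsume those of stage $n-1$ without proliferating, so that a single finite set suffices in the limit. Lemma~\ref{7.3/7}, Proposition~\ref{7.3/8}, and the coherence statements are exactly the inputs designed to make this bookkeeping close.
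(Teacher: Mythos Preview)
Your proposal is correct and follows essentially the same route as the paper's proof, which is itself a terse transcription of Bosch's Proposition~7.3/10: for (1) lift the surjection $R_0[T^{1/p^\infty}]\twoheadrightarrow A_0$ to a continuous surjection $\varphi:R\lr{T^{1/p^\infty}}\to A$ by successive approximation, and for (2) find a finitely generated $\id{a}'\subset\id{a}=\kr\varphi$ with $\id{a}=\id{a}'+I\id{a}$ and conclude $\id{a}=\id{a}'$ by the $I$-adic limit argument via Lemma~\ref{7.3/7} and Proposition~\ref{7.3/8}. Your formulation of the last step as ``$\psi_n:B_n\to A_n$ is an isomorphism for every $n$'' is just an unwinding of that same limit, and your explicit appeal to coherence (Proposition~\ref{coherent1}, Corollary~\ref{coherent2}) to secure finite generation of the kernel modulo $I$ supplies a detail the paper leaves to ``as in the cited reference''.
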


\begin{proof}
\begin{enumerate}
\item The proof is identical to Proposition 10 \cite[pp. 166]{bosch2014lectures}, by applying direct limit to epimorphism $R_0[T]\ra C_0$, gives the epimorphism $\varphi_0:R_0[T^{1/p^\infty}]\ra A_0=\varinjlim C_0$ which in turn gives a continuous onto $R$ algebra homorphism $\varphi:R\lr{T^{1/p^\infty}}\ra A$ as in cited reference.
\item Let $\id{a}=\kr\varphi$ ($\varphi$ as defined above), leading to a SES
\begin{equation}
0\ra \id{a}\ra R\lr{T^{1/p^\infty}}\xra{\varphi}A
\end{equation}
\end{enumerate}
which leads to the following SES (as in the cited reference) where algebras $A_n$ are assumed to be of finite presentation over $R_n$. From the Lemma \ref{7.3/7} we get the exact sequence.
\begin{equation}
0\ra\dfrac{\id{a}}{\id{a}\cap I^nR\lr{T^{1/p^\infty}}}\ra R_n[T^{1/p^\infty}]\ra A_n\ra 0
\end{equation}
As in the cited reference there is a finitely generated ideal $\id{a}'\subset \id{a}$ such that $\id{a}=\id{a}'+I\id{a}$, and limit argument (with respect to $I$) gives $\id{a}=\id{a}'.$
\end{proof}

\begin{proposition}\label{7.3/11}
Let $\varphi:A\ra B$ be a morphism of $R$ algebras of topologically finite eka presentation, and $M$ a finite $B$ module. Then $M$ is flat (resp. faithfully flat) $A$ module if and only if $M_n$ is flat(resp. faithfully flat) $A_n$ module for $n\in\ds{N}$
\end{proposition}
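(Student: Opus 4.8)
The plan is to transplant the proof of the Noetherian adic analogue (cf.\ \cite[Ch.~7]{bosch2014lectures}) to the present, possibly non-Noetherian, setting; the single new ingredient is that Proposition~\ref{7.3/8} furnishes, at no cost, the ideal-separatedness hypothesis needed to run the local criterion of flatness over $A$. Write $I=\pi A$ for the principal ideal of definition, put $A_n=A/I^{n+1}$ and $M_n=M\otimes_AA_n=M/I^{n+1}M$, and recall from Proposition~\ref{7.3/8} that $A$, $B$, and every finite $B$-module are $I$-adically complete and separated; in particular $I\subset\rad(A)$, since for $x\in I$ the series $\sum_{k\ge0}x^k$ converges and inverts $1-x$, so every maximal ideal of $A$ contains $I$. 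The forward implications are pure base change: if $M$ is $A$-flat then $M_n=M\otimes_AA_n$ is $A_n$-flat; if $M$ is faithfully flat, then every maximal ideal of $A_n$ is induced by a maximal ideal $\id{m}\supset I$ of $A$, and $M_n\otimes_{A_n}\kappa(\id{m})=M\otimes_A\kappa(\id{m})\ne0$, so $M_n$ is faithfully flat over $A_n$.

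For the converse I would treat the flat case first. Let $\id{a}=(g_1,\ldots,g_r)\subset A$ be an arbitrary finitely generated ideal; tensoring the surjection $A^r\twoheadrightarrow\id{a}$ with $M$ yields a surjection $M^r\twoheadrightarrow\id{a}\otimes_AM$ of $B$-modules, so $\id{a}\otimes_AM$ is a \emph{finite} $B$-module, hence $I$-adically separated by Proposition~\ref{7.3/8}. Thus $M$ is $I$-adically ideally separated over $A$; combining this with the hypothesis that $M/I^{m}M$ is flat over $A/I^{m}$ for all $m\ge1$ and applying the local criterion of flatness in its ideally-separated form (the Noetherian case of which is the classical one, \cite[\S22]{matsumura1989commutative}, with the eka-adapted Artin--Rees results of Lemma~\ref{7.3/7} and Proposition~\ref{7.3/8} playing the role of the Noetherian hypothesis), we conclude that $M$ is flat over $A$.

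For the faithfully flat converse, suppose each $M_n$ is faithfully flat over $A_n$; in particular each $M_n$ is flat, so $M$ is flat over $A$ by the previous step, and only faithfulness remains. Since $M$ is $A$-flat and $I\subset\rad(A)$, a flat $A$-module is faithfully flat as soon as its reduction modulo $I$ is faithfully flat over $A/I$, so it suffices to prove $M_0$ faithfully flat over $A_0$. But for any $A_0$-module $N$, regarded as an $A_n$-module through $A_n\twoheadrightarrow A_0$, one has $M_0\otimes_{A_0}N=M_n\otimes_{A_n}N$, which is nonzero whenever $N\ne0$ because $M_n$ is faithfully flat over $A_n$; hence $M_0$ is faithfully flat over $A_0$, as required.

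I expect the main obstacle to be the invocation of the local flatness criterion over $A$ in the flat direction of the converse: since $A$ need not be Noetherian, the textbook statement does not apply verbatim, and the real work is verifying ideal-separatedness by hand. This is exactly where the eka-adapted Artin--Rees machinery of Section~\ref{top1} earns its keep --- over a general non-Noetherian ring a quotient of a separated module need not be separated, and it is only because $\id{a}\otimes_AM$ is itself a finite $B$-module that Proposition~\ref{7.3/8} applies to it. Once this one point is secured, the rest of the argument runs parallel to the Noetherian proof in \cite{bosch2014lectures}.
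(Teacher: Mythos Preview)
Your proof is correct and follows essentially the same approach as the paper: the paper simply refers to Bosch's Proposition~11 and instructs one to replace the Noetherian Artin--Rees input (Bosch's Lemma~7 and Proposition~8) by the eka-adapted Lemma~\ref{7.3/7} and Proposition~\ref{7.3/8}, which is exactly what you do when you verify ideal-separatedness of $\id{a}\otimes_AM$ via Proposition~\ref{7.3/8} and then invoke the local criterion of flatness. Your write-up is in fact more explicit than the paper's one-line reference, and your identification of the key point---that $\id{a}\otimes_AM$ is a finite $B$-module and hence separated---is precisely the hinge on which the non-Noetherian adaptation turns.
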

\begin{proof}
See Proposition 11 \cite[pp. 166]{bosch2014lectures} replacing occurence of topologically finite type with topologically finite eka presentation, and use Lemma \ref{7.3/7} (in place of Lemma 7 of cited reference) and Proposition \ref{7.3/8} (in place of Prospostion 8 of cited reference).
\end{proof}

\begin{corollary}\label{7.3/12}
Let $A$ be an $R$ algebra of topologically finite eka presentation, and let $f_1,\ldots, f_r\in A$ generate the unit ideal. Then, all the canonical maps $A\ra A\lr{f_i^{-1}}$ are flat, and $A\ra\prod_{i=1}^r A\lr{f_i^{-1}}$ is faithfully flat.
\end{corollary}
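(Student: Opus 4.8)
The plan is to reduce the statement to a verification at the truncated (discrete) level $R_n=R/I^{n+1}$, where the localization maps become ordinary localizations of rings (up to adjoining $p$-power roots of a unit), and then invoke Proposition \ref{7.3/11}. First I would record that each localization $A\lr{f_i^{-1}}$ is again an $R$ algebra of topologically finite eka presentation: in the eka framework it is the quotient $A\lr{\xi}_\infty/(f_i\xi-1)$ of $A\lr{\xi}_\infty$ (adjoin one variable, together with its $p$-power roots, to $A$, as in Construction \ref{const1a}) by the finitely generated ideal $(f_i\xi-1)$, and $A\lr{\xi}_\infty$ is of topologically finite eka presentation over $R$ because $A$ is. Likewise the finite product $B:=\prod_{i=1}^r A\lr{f_i^{-1}}$ is of topologically finite eka presentation over $R$ (amalgamate the individual presentations together with the standard idempotent relations $e_je_k=\delta_{jk}e_j$, $\sum_j e_j=1$, which are again finite in number). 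Hence $A\to A\lr{f_i^{-1}}$ and $A\to B$ are morphisms of $R$ algebras of topologically finite eka presentation, and applying Proposition \ref{7.3/11} with $M$ taken to be $A\lr{f_i^{-1}}$ (resp.\ $B$) as the free rank-one module over itself, it suffices to prove: (i) $A\lr{f_i^{-1}}_n$ is flat over $A_n$ for every $n\in\ds{N}$; and (ii) $B_n=\prod_{i=1}^r A\lr{f_i^{-1}}_n$ is faithfully flat over $A_n$ for every $n$.

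For (i), tensoring the presentation with $R_n$ gives $A\lr{f_i^{-1}}_n\cong A_n[\xi^{1/p^\infty}]/(f_i\xi-1)$, and in this quotient $\xi=f_i^{-1}$ is forced to be a unit while the adjoined roots $\xi^{1/p^k}$ become successive $p$-th roots of that unit; thus $A_n\to A\lr{f_i^{-1}}_n$ factors as the localization $A_n\to A_n[f_i^{-1}]$ followed by a filtered colimit of extensions of the form $C\to C[\eta]/(\eta^p-u)$ with $u\in C^\times$, each of which is free of rank $p$ as a $C$ module. A localization is flat, a finite free extension is flat, and a filtered colimit of flat modules is flat, so $A\lr{f_i^{-1}}_n$ is flat over $A_n$ (if one prefers the non-eka localization $A\lr{\xi}/(f_i\xi-1)$, then $A\lr{f_i^{-1}}_n\cong A_n[f_i^{-1}]$ on the nose and flatness is immediate). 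For (ii), $B_n=\bigoplus_{i=1}^r A\lr{f_i^{-1}}_n$ is a finite direct sum of flat $A_n$ modules, hence flat; and for faithful flatness note that since $f_1,\ldots,f_r$ generate the unit ideal of $A$ they generate the unit ideal of $A_n$, so no prime $\id{q}\subset A_n$ can contain all of them, and picking $i$ with $f_i\notin\id{q}$ we get $A\lr{f_i^{-1}}_n\otimes_{A_n}\kappa(\id{q})\neq 0$, hence $B_n\otimes_{A_n}\kappa(\id{q})\neq 0$. Feeding (i) and (ii) back through Proposition \ref{7.3/11} yields that each $A\to A\lr{f_i^{-1}}$ is flat and $A\to\prod_{i=1}^r A\lr{f_i^{-1}}$ is faithfully flat.

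The only point requiring real care is the framework rather than the flatness computation: one must confirm that $A\lr{f_i^{-1}}$ and its finite products genuinely lie in the class of topologically finite eka presentation $R$ algebras so that Proposition \ref{7.3/11} is applicable, and one must compute $A\lr{f_i^{-1}}_n$ correctly — in the eka setting this is the localization $A_n[f_i^{-1}]$ enlarged by all $p$-power roots of the unit $f_i^{-1}$, not $A_n[f_i^{-1}]$ verbatim, but this difference is harmless since both are flat over $A_n$ for the reasons above. Once these identifications are in place the argument is exactly the truncation argument of Corollary 7.3/12 in \cite[pp. 166]{bosch2014lectures}, with Propositions \ref{7.3/11} and \ref{7.3/8} of this paper taking the place of their counterparts there.
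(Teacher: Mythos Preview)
Your proof is correct and follows the same approach as the paper, which simply defers to Corollary~12 of \cite[pp.~167]{bosch2014lectures} with Proposition~\ref{7.3/11} substituted for its counterpart there. Note that in the paper $A\lr{f_i^{-1}}$ is the standard formal localization $A\lr{\xi}/(1-f_i\xi)$ without $p$-power roots on $\xi$ (so $A\lr{f_i^{-1}}_n\cong A_n[f_i^{-1}]$ on the nose), which you already cover as your parenthetical alternative; either reading yields the same conclusion.
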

\begin{proof}
See Corollary 12 \cite[pp. 167]{bosch2014lectures} replacing occurence of topologically finite type with topologically finite eka presentation and use Proposition \ref{7.3/11} (in palce of Proposition 8 of the cited reference).
\end{proof}
\begin{corollary}\label{coro13}
Let $A$ be an $R$ algebra that is $I$ adically complete and separated, and let $f_1,\ldots,f_r\in A$ generate the unit ideal. Then the following are equivalent
\begin{enumerate}
\item $A$ is of topologically finite eka presentation (resp. admissible).
\item $A\lr{f_i^{-1}}$ is of topologically finite eka presentation (resp. admissible).
\end{enumerate}
\end{corollary}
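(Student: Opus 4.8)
The plan is to reduce both implications to statements about the discrete quotients $A_n = A/I^{n+1}A$ via Proposition \ref{7.3/10}(2), and then to invoke the classical fact that finite presentation of a ring map is local on the source for Zariski coverings. Beyond Proposition \ref{7.3/10}, the two ingredients I expect to use are: the identification $(A\lr{f_i^{-1}})_n = A_n[f_i^{-1}]$, which holds because $I = (\pi)$ is principal and $\pi$-adic completion commutes with both passing mod $I^{n+1}$ and localizing at a single element; and Corollary \ref{7.3/12}, which supplies the flatness and faithful flatness needed to transport the ``no $I$-torsion'' clause in the admissible case. Throughout, $A\lr{f_i^{-1}}$ is the $\pi$-adic completion of $A[f_i^{-1}]$, hence automatically $I$-adically complete and separated, so Proposition \ref{7.3/10} applies to it.

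For the implication $(1)\Rightarrow(2)$, I would start from the assumption that $A$ is of topologically finite eka presentation, so by Proposition \ref{7.3/10}(2) each $A_n$ is of finite presentation over $R_n$. Localization at one element preserves finite presentation, whence $A_n[f_i^{-1}] = (A\lr{f_i^{-1}})_n$ is of finite presentation over $R_n$ for every $n$; applying Proposition \ref{7.3/10}(2) in the reverse direction gives that $A\lr{f_i^{-1}}$ is of topologically finite eka presentation. For the parenthetical admissible case, if $A$ has no $I$-torsion then $\pi$ is a nonzerodivisor on $A$, and since $A\to A\lr{f_i^{-1}}$ is flat by Corollary \ref{7.3/12}, $\pi$ remains a nonzerodivisor on $A\lr{f_i^{-1}}$; combined with the presentation statement this makes $A\lr{f_i^{-1}}$ admissible.

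For the converse $(2)\Rightarrow(1)$, I would first handle the presentation statement and only afterwards the torsion. Assuming each $A\lr{f_i^{-1}}$ is of topologically finite eka presentation, Proposition \ref{7.3/10}(2) gives that $(A\lr{f_i^{-1}})_n = A_n[f_i^{-1}]$ is of finite presentation over $R_n$. Since $f_1,\dots,f_r$ generate the unit ideal of $A$, their images generate the unit ideal of the quotient $A_n$, so $\spc A_n$ is covered by the Zariski opens $\spc A_n[f_i^{-1}]$; as finite presentation of a ring map is local on the source, $A_n$ is of finite presentation over $R_n$, and this for all $n$, so Proposition \ref{7.3/10}(2) yields that $A$ is of topologically finite eka presentation. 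Now, in the admissible case, knowing that $A$ is of topologically finite eka presentation lets me apply Corollary \ref{7.3/12}: the map $A\to\prod_{i=1}^r A\lr{f_i^{-1}}$ is faithfully flat, in particular injective; each factor is $\pi$-torsion-free, hence so is the product and therefore so is the subring $A$, i.e. $A$ is admissible.

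The step I expect to be the main obstacle is the gluing in $(2)\Rightarrow(1)$: descending ``finite presentation over $R_n$'' from the members $A_n[f_i^{-1}]$ of a Zariski cover of $\spc A_n$ to $A_n$ itself. The point requiring care is that $A_n$ need not be Noetherian when $R$ is an $\eka^d$ ring, so one must use the version of ``finite presentation is local on the source'' valid for arbitrary commutative rings rather than any Noetherian shortcut; the only genuine hypothesis to check is that the $f_i$ still generate the unit ideal after reduction modulo $I^{n+1}$, which is immediate. The remaining pieces — the identification $(A\lr{f_i^{-1}})_n = A_n[f_i^{-1}]$, stability of finite presentation under localization at one element, and the flatness inputs — are routine consequences of Proposition \ref{7.3/10}, Corollary \ref{7.3/12}, and the machinery of Chapter~7 of \cite{bosch2014lectures}, which transcribes verbatim with ``topologically finite type'' replaced by ``topologically finite eka presentation.''
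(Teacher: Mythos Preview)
Your proposal is correct and follows essentially the same approach as the paper: the paper's proof is a direct reference to Corollary 13 of \cite[p.~167]{bosch2014lectures} with the instruction to substitute Proposition \ref{7.3/10} and Corollary \ref{7.3/12} for their classical counterparts, and you have faithfully unpacked that reference --- reducing to the discrete quotients $A_n$ via Proposition \ref{7.3/10}(2), using the Zariski-local nature of finite presentation for $(2)\Rightarrow(1)$, and invoking Corollary \ref{7.3/12} for the torsion clause. The paper also appends a commutative square in the $\eka$ case (inverting the generator $g$ of $I$) as an alternative way to see the admissibility descent, but this is a variant of the same faithful-flatness argument you give, not a different strategy.
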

\begin{proof}
See Corollary 13 \cite[pp. 167]{bosch2014lectures} replacing occurence of topologically finite type with topologically finite eka presentation. Use Proposition \ref{7.3/10} (in place of Proposition 10 in cited reference) and Corollary \ref{7.3/12} (in place of Corollary 12 in cited reference).

In case $R$ is $\eka$ with ideal of definition generated by a single element $g$, the result follows from the commutative diagram.

\begin{equation}
  \begin{tikzpicture}
 []
        \matrix (m) [
            matrix of math nodes,
            row sep=2.5em,
            column sep=1.5em,
                   ]
{ |[name=q1]|A &  |[name=a1]|~~ & |[name=b1]|\prod_{i=1}^r A\lr{f_i^{-1}} \\
  |[name=q2]|A[g^{-1}] &  |[name=a2]|~~ & |[name=b2]|\prod_{i=1}^r A\lr{f_i^{-1}}[g^{-1}]\\
        };
        \path[overlay,->, font=\scriptsize,>=latex]
 (q1) edge (b1)

  (q2) edge  (b2)

 ;
   \path[overlay,right hook-latex, font=\scriptsize,>=latex]
 (q1) edge  (q2)
  (b1) edge (b2)
   ;
\end{tikzpicture}
\end{equation}

\end{proof}

The following Lemma is again proved in Proposition \ref{G2}.

\begin{lemma}\label{7.3/14}
Let $A$ be an $R$ algebra of topologically finite eka presentation, $B$ an $A$ algebra of finite type, and $M$ a finite $B$ module. Then if $\rwhat{B}$ and $\rwhat{M}$ are the $I$ adic completions of $B$ and $M$, the canonical map
\[M\otimes_b\rwhat{B}\ra \rwhat{M}\]
is an isomorphism.
\end{lemma}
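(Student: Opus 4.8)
The plan is to transcribe the proof of \cite[Lemma 7.3/14]{bosch2014lectures} into the $\eka$ setting, replacing Noetherianity by the coherence results above and routing every flatness statement through Lemma \ref{G1} and the direct-limit philosophy. First I would reduce to the case $M=B$. Writing $B=A[\zeta_1,\ldots,\zeta_s]/\id{b}$, the ring $A$ is coherent (it is a quotient of $R\lr{X_1,\ldots,X_n}_\infty$ by a finitely generated ideal, hence coherent by Corollary \ref{coherent2}), so the polynomial extension $A[\zeta_1,\ldots,\zeta_s]$ is coherent, and hence so is its quotient $B$ by the finitely generated ideal $\id{b}$; therefore the finite $B$-module $M$ admits a finite presentation $B^{s'}\to B^{t}\to M\to 0$. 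Applying the right-exact functor $(-)\otimes_B\rwhat{B}$ and the $I$-adic completion functor $M\mapsto\rwhat{M}$ to this presentation and comparing them by the canonical natural transformation, it suffices to know that $I$-adic completion is right exact on finite $B$-modules and that the two functors agree on finite free modules; the latter is clear, since finite direct sums commute with both completion and tensor product, and on $B$ itself the comparison map is the identity $\rwhat{B}=\rwhat{B}$.

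Right exactness of $I$-adic completion on finite $B$-modules is the Krull (and Artin--Rees) condition for the pair $(B,IB)$, and since $IB$ is generated by the single element $\pi$ this is exactly Lemma \ref{7.3/7}: for a surjection $B^{t}\twoheadrightarrow M$ with kernel $N$ (finitely generated, as $B$ is coherent), the inclusions $(\pi^{m+n}B^{t})\cap N\subset\pi^{m}N\subset(\pi^{m}B^{t})\cap N$ show that the $\pi$-adic topology of $B^{t}$ restricts to the $\pi$-adic topology of $N$, whence $\rwhat{N}\to\rwhat{B^{t}}\to\rwhat{M}\to 0$ is exact. The one input that genuinely uses the structure theory is the identification of the $I$-adic completion of $A[\zeta_1,\ldots,\zeta_s]$ with the restricted power series ring $A\lr{\zeta_1,\ldots,\zeta_s}$ (from $A[\zeta]/\pi^{n}A[\zeta]=(A/\pi^{n}A)[\zeta]$ and passage to the limit, exactly as in the discussion preceding Remark \ref{7.5.4}), together with flatness of $A[\zeta_1,\ldots,\zeta_s]\to A\lr{\zeta_1,\ldots,\zeta_s}$ --- the ordinary-variable analogue of Lemma \ref{G1}, obtained by the same argument: flatness at each Noetherian level $R_i[\zeta]\to R_i\lr{\zeta}$, then $\varinjlim$ (exact), then $I$-adic completion along the finitely generated ideal of definition.

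I expect the main obstacle to be not a single deep step but the bookkeeping of this non-Noetherian reduction. In Bosch's argument one freely invokes that $B$ is Noetherian and that its completion is flat over $B$; here each such use must be replaced either by coherence (Corollary \ref{coherent2} and the proof of Proposition \ref{coherent1}) or by writing $B$ and $M$ as direct limits $\varinjlim B_i$ and $\varinjlim M_i$ of finite-type objects over Noetherian rings $B_i$, applying the classical \cite[Lemma 7.3/14]{bosch2014lectures} at each level, and then passing to $\varinjlim$ (exact) and to $I$-adic completion (which commutes with finite direct sums and preserves surjections of finite modules, as in the proof of Corollary \ref{coherent2}); the care lies in checking that these limit operations are compatible with the canonical map $M\otimes_B\rwhat{B}\to\rwhat{M}$. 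A second, independent proof that sidesteps this bookkeeping is the one given later as Proposition \ref{G2}, which deduces the statement directly from the flatness inputs to Gabber's Lemma.
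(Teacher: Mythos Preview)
Your plan differs from the paper's proof and has a genuine gap. The paper does \emph{not} reduce to a finite presentation of $M$; it follows Bosch's diagram chase directly. Starting from $0\to N\to B^n\xra{p} M\to 0$ (with no finiteness assumption on $N$), it builds the commutative diagram
\[
\begin{tikzcd}
& N\otimes_B\rwhat{B}\ar[r]\ar[d,"f"] & \rwhat{B}^n\ar[r]\ar[d,equal] & M\otimes_B\rwhat{B}\ar[r]\ar[d,"h"] & 0\\
0\ar[r] & \wbr{N}\ar[r] & \rwhat{B}^n\ar[r,"\rwhat{p}"] & \rwhat{M}\ar[r] & 0
\end{tikzcd}
\]
The surjectivity of $h$ is standard; injectivity comes from the snake lemma once one knows $f$ is surjective. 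The paper's key step is that $\rwhat{B}$ is of topologically finite eka type (Proposition \ref{7.3/10}), so by Proposition \ref{7.3/8} the image of $N\otimes_B\rwhat{B}$ in $\rwhat{B}^n$, being a finite $\rwhat{B}$-module, is $I$-adically complete and hence closed, i.e.\ equal to $\wbr{N}$. No coherence of $B$ and no finite presentation of $M$ is needed.

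Your route instead needs $M$ to be finitely presented over $B$, which you obtain by claiming that $A[\zeta_1,\ldots,\zeta_s]$ is coherent because $A$ is. This is not a valid inference: polynomial extensions of coherent rings are \emph{not} coherent in general (Soublin's counterexample). Nothing in Corollary \ref{coherent2} or Proposition \ref{coherent1} covers $A[\zeta]$; those results are about $R\lr{T^{1/p^\infty}}$ and its quotients, not about adjoining ordinary indeterminates to such quotients. Relatedly, you assume $\id{b}\subset A[\zeta]$ is finitely generated, but the hypothesis is only that $B$ is of finite type over $A$, not of finite presentation. So the very first reduction in your plan is unjustified, and the alternative you mention (writing $B,M$ as direct limits of Noetherian objects) does not work either, since $A$ is already $I$-adically completed and is not such a direct limit. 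The paper's diagram chase is designed precisely to avoid this coherence trap.
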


\begin{proof}

This proof closely follows Lemma 14 \cite[pp. 168]{bosch2014lectures}. Start by choosing an exact sequence

\begin{equation}
  0\ra N\ra B^n\xra{p}M \ra 0
\end{equation}
and obtain the first row of the commutative diagram below by applying the $-\otimes_B\rwhat{B}$ (a right exact functor) to the exact sequence. Notice that $h:M\otimes_B\rwhat{B}\ra \rwhat{M}$ is surjective [tag 0315]. It needs to be shown that $h$ is injective to get the isomorphism $h:M\otimes_B\rwhat{B}\ra \rwhat{M}$. This will be obtained by showing $\kr h=0$, by applying snake lemma to \eqref{eq7.3/14} and showing that $\ckr f=0$, or the map $f$ is surjective.

The map $\rwhat{p}$ is surjective since it is composed of surjective maps $\rwhat{B}^n\ra M\otimes_B\rwhat{B}\xra{h}\rwhat{M}$. Let $\wbr{N}$ denote the closure of $N$ in $\rwhat{B}^n$, then $i$ is an inclusion map which can be shown by explicitly finding convergent sequences as in the cited reference.

\begin{equation}\label{eq7.3/14}
\begin{tikzpicture}
 []
        \matrix (m) [
            matrix of math nodes,
            row sep=2.5em,
            column sep=2.5em,
                   ]
{ |[name=w1]|~~ & |[name=q1]|N\otimes_B \rwhat{B} &  |[name=a1]|\rwhat{B}^n & |[name=b1]|M\otimes_B \rwhat{B}& |[name=c1]|0 \\
  |[name=w2]|0 &|[name=q2]|\wbr{N} &  |[name=a2]|\rwhat{B}^n & |[name=b2]|\rwhat{M}& |[name=c2]|0\\
        };
        \path[overlay,->, font=\scriptsize,>=latex]
 (q1) edge (a1)
(a1) edge (b1)
(b1) edge (c1)

(w2) edge (q2)

(a2) edge node[auto] {\(\hat{p}\)} (b2)
(b2) edge (c2)

(q1) edge node[auto] {\(f\)} (q2)

(b1) edge node[auto] {\(h\)}(b2)

 ;

  \path[overlay,right hook-latex, font=\scriptsize,>=latex]
(q2) edge node[auto] {\(i\)}(a2)
  ;
\path[commutative diagrams/.cd, every arrow, every label,font=\scriptsize]
(a1) edge[commutative diagrams/equal](a2)
  ;

\end{tikzpicture}
\end{equation}

The Proposition \ref{7.3/10} yields that $\hat{B}$ is topologically finite eka type, since $A$ is topologically finite eka type and $B$ is finite $A$ algebra. It follows from Proposition \ref{7.3/8} that image of $N\otimes_B \hat{B}$ is closed in $\hat{B}^n$ and thus equals $\wbr{N}$, proving the surjectivity of $f$.

\end{proof}

\subsection{Raynaud Gruson Analogue}
Consider the following SES
\begin{equation}\label{RGeq1}
  0\ra(X_1,X_2,\ldots)\ra R[X_1,X_2,\ldots]\ra \frac{R[X_1,X_2,\ldots]}{(X_1,X_2,\ldots)}\ra 0
\end{equation}

The module ${R[X_1,X_2,\ldots]}/{(X_1,X_2,\ldots)}$ is finitely generated as $R[X_1,X_2,\ldots]$ module but the above SES shows that it is not finitely presented. A finite presentation of a finitely generated $R$ module $M$ translates to finitely generated Kernel of the surjective map $R^n\ra M$ (with $n$ a positive integer).

In fact, the Raynaud Gruson theorem for topologically finite type depends upon finite presentation of finitely generated modules on the Notherian ring $R[T]$. For the case at hand finite presentation of finitely generated modules on the $R[T^{1/p^\infty}]$ is required, but this ring is Non Notherian and can present similar problem as \ref{RGeq1}. Consider the following SES (suggested by Scholze)

\begin{equation}\label{eqsch1}
  0\ra (T,T^{1/p},T^{1/p^2},\ldots)\ra R[T^{1/p^\infty}]\ra \frac{R[T^{1/p^\infty}]}{(T,T^{1/p},T^{1/p^2},\ldots)}\ra 0.
\end{equation}
with infinitely generated kernel. Such, counter examples will be explicitly avoided by making suitable assumptions.

Let $R$ be the $\eka^d$ ring with ideal of definition generated by $t$ and $B$ a topologically finite $R$ algebra and $M$ a finite $B$ module. Then $M/tM$ is a finite $B/tB$ module of finite presentation (since we are now in the world of Notherian rings of the form $R[T]$). This is precisely what does not hold any more for topologically finite eka type as shown in example \eqref{eqsch1}.

\begin{theorem}\label{tfppt}
Let $B$ be an $R$ algebra of topologically finite eka presentation and $M$ a finite $B$ module that is flat over $R$. Furthermore, $M/tM$ is a finite $B/tB$ module of finite presentation. Then $M$ is an $B$ module of finite presentation, that is
\[B^r\ra B^s\ra M\ra 0\]
\end{theorem}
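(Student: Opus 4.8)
The plan is to follow the classical completeness‑and‑lifting strategy (in the spirit of the Raynaud--Gruson type arguments of \cite[Ch.~7]{bosch2014lectures}): reduce everything to the statement that the kernel of a finite free presentation of $M$ is finitely generated, and then transport that statement across the reduction modulo $t$. First I would pick a surjection $\varphi\colon B^{s}\twoheadrightarrow M$ of $B$-modules (possible since $M$ is finite over $B$) and set $N=\kr\varphi$, giving an exact sequence $0\to N\to B^{s}\xrightarrow{\varphi}M\to 0$; it suffices to prove that $N$ is a finitely generated $B$-module.

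To descend modulo $t$, observe that the flatness of $M$ over $R$ gives $\operatorname{Tor}^{R}_{1}(M,R/tR)=0$, so tensoring the sequence with $R/tR$ over $R$ yields an exact sequence of $B/tB$-modules
\[
0\longrightarrow N/tN\longrightarrow (B/tB)^{s}\longrightarrow M/tM\longrightarrow 0.
\]
By hypothesis $M/tM$ is of finite presentation over $B/tB$, and the kernel of any surjection from a finite free module onto a finitely presented module is again finitely generated (a standard consequence of Schanuel's lemma); hence $N/tN$ is finitely generated over $B/tB$, and a fortiori over $B$.

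Next I would lift finiteness back up by a topological Nakayama argument. Since $B^{s}$ is a finite $B$-module it is $t$-adically complete and separated by Proposition \ref{7.3/8}, and since $M$ is $t$-adically separated (Proposition \ref{7.3/8} again) the submodule $N=\varphi^{-1}(0)$ is closed in $B^{s}$; by Lemma \ref{7.3/7} the topology $N$ inherits from $B^{s}$ is its own $t$-adic topology, so $N$ is $t$-adically complete and separated. Choose $n_{1},\dots,n_{r}\in N$ lifting a finite generating family of $N/tN$ and put $N'=Bn_{1}+\dots+Bn_{r}\subseteq N$; then $N=N'+tN$, hence $N=N'+t^{k}N$ for all $k\ge 1$ by iteration. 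The module $N'$ is finite over $B$, hence $t$-adically complete and separated (Proposition \ref{7.3/8}) and carries its $t$-adic topology as a subspace of $B^{s}$ (Lemma \ref{7.3/7}), so $N'$ is closed in $B^{s}$, a fortiori closed in $N$. Therefore $N/N'$ is $t$-adically separated, while $N/N'=t\,(N/N')$ forces $N/N'=\bigcap_{k}t^{k}(N/N')=0$. Thus $N=N'$ is finitely generated, and $0\to N\to B^{s}\to M\to 0$ exhibits $M$ as a $B$-module of finite presentation.

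I expect the essential content — and the reason the hypothesis on $M/tM$ is indispensable — to sit in the first displayed step: over the non-Noetherian ring $B/tB$ (a quotient of $(R/tR)[X_{1}^{1/p^{\infty}},\dots]$) a finitely generated module need not be finitely presented, as the sequence \eqref{eqsch1} shows, so the finite generation of $N/tN$ genuinely uses the assumption and cannot be read off from $M$ being merely finite over $B$. The remaining delicate point is the topological Nakayama step, which rests on the Artin--Rees type Lemma \ref{7.3/7}: that is what guarantees $N$ and $N'$ carry their $t$-adic topologies as subspaces of $B^{s}$, so that closedness (equivalently completeness, via Proposition \ref{7.3/8}) can be used to separate $N/N'$. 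Finally, the flatness of $M$ over $R$ is used exactly once but essentially, to keep the reduction modulo $t$ left-exact; without $\operatorname{Tor}^{R}_{1}(M,R/tR)=0$ one obtains only a right-exact fragment and $N/tN$ need not inject into $(B/tB)^{s}$.
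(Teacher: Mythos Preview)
Your proposal is correct and follows essentially the same route as the paper: pick a surjection $B^{s}\twoheadrightarrow M$, use flatness of $M$ over $R$ to keep the short exact sequence exact after $\otimes_R R/tR$, deduce from the finite-presentation hypothesis on $M/tM$ that the kernel is finitely generated modulo $t$, and then lift this finiteness $t$-adically. The only cosmetic difference is in the last step: where you spell out a topological Nakayama argument via Lemma~\ref{7.3/7} and Proposition~\ref{7.3/8}, the paper simply invokes Matsumura's Theorem~8.4 (which needs only that the kernel be $t$-adically separated, immediate from its inclusion in $B^{s}$).
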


\begin{proof}

Since, $R$ is eka its ideal of definition is generated by a single element say $t$, and $M/tM$ is a finite $B/tB$ module which is flat over $R/tR$. Morevover, $M/tM$ is an $B/tB$ module of finite presentation by assumption.

Consider the following short exact sequence
\begin{equation}\label{7.3/4eq1}
0\ra L\ra B^s\ra M\ra 0
\end{equation}
which will remain exact when tensored with $R/tR$ since $M$ is flat over $R$. As in the reference  \cite[pp. 163]{bosch2014lectures} or Proposition 9.2.1 \cite[pp. 221-222]{fujiwara2018foundations} this gives $L/tL$ is a finite $B/tB$ module (follows from finite presentation of $M/tM$) which implies (standard $t$ adic approximation as given in Theorem 8.4 \cite[pp. 58]{matsumura1989commutative}) that $L$ is a finite $B$ module proving the finite presentation.

\end{proof}

Let us prove the coherence property again.

\begin{lemma}\label{7.3lemma6} \label{7.3/6}
  Let $R$ be an $\eka^d$ ring of the form $A[t,t^{1/p},t^{1/p^2},\ldots]$ where $A$ is Notherian.
\begin{enumerate}
\item The ring $R\lr{T}_\infty$ is coherent.
\item Let $A$ be an $R$ algebra of topologically finite eka presentation then $A$ is coherent.
\end{enumerate}

\end{lemma}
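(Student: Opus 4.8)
The plan is to reduce both assertions to the one-variable coherence already proved in Proposition \ref{coherent1} and Corollary \ref{coherent2}, together with the elementary fact that a quotient of a coherent ring by a finitely generated ideal is again coherent.

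For (1) I would first note that $R\lr{T}_\infty = R\lr{T^{1/p^\infty}}$ by Construction \ref{const1a}, and that $R = A[t,t^{1/p},t^{1/p^2},\ldots]$ with $A$ Noetherian is exactly the class of $\eka^p$ rings treated in Proposition \ref{coherent1}; Corollary \ref{coherent2} then applies verbatim. For a genuinely composite exponent $d$ the same argument works: a finitely generated ideal of $R[T^{1/d^\infty}]$ already lies in the Noetherian subring $A[t^{1/d^i},T^{1/d^i}]$ for $i$ large (its finitely many generators involve only finitely many fractional powers), so one takes a finite presentation there, applies $\varinjlim_i$, completes $a$-adically, and uses Proposition \ref{7.3/8} together with the $a$-adic approximation of \cite[Theorem 8.4]{matsumura1989commutative} to see that the syzygy module stays finite.

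For (2) I would write the $R$-algebra as $B \cong R\lr{X_1,\ldots,X_n}_\infty/\id{I}_\infty$ with $\id{I}_\infty$ finitely generated (the definition of topologically finite eka presentation). The first step is to upgrade coherence to the multivariate ring $R\lr{X_1,\ldots,X_n}_\infty$: the proof of Proposition \ref{coherent1} carries over unchanged, since any finitely generated ideal of $R[X_1^{1/p^\infty},\ldots,X_n^{1/p^\infty}]$ lies in the Noetherian subring $A[t^{1/p^i},X_1^{1/p^i},\ldots,X_n^{1/p^i}]$ for $i\gg 0$ (a finitely generated algebra over the Noetherian ring $A$), admits a finite presentation there, and one then applies $\varinjlim_i$ and $a$-adic completion exactly as in Corollary \ref{coherent2}. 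The second step is a descent along $\pi\colon R\lr{X_1,\ldots,X_n}_\infty\to B$: since $\id{I}_\infty=\kr\pi$ is a finitely generated ideal of a coherent ring it is finitely presented, hence coherent as a module, so $B$ is the cokernel of a map of coherent modules and therefore a coherent $R\lr{X_1,\ldots,X_n}_\infty$-module. Then, given a finitely generated ideal $\id{b}\subset B$, it is a finitely generated submodule of the coherent module $B$, hence coherent over $R\lr{X_1,\ldots,X_n}_\infty$, so there is a finite presentation $(R\lr{X_1,\ldots,X_n}_\infty)^a\to (R\lr{X_1,\ldots,X_n}_\infty)^b\to\id{b}\to 0$; since $\id{I}_\infty$ annihilates $\id{b}$ this descends to a finite presentation $B^a\to B^b\to\id{b}\to 0$, proving $B$ coherent. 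Equivalently, one may simply invoke the standard fact that a quotient of a coherent ring by a finitely generated ideal is coherent.

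I expect the main obstacle to be the first step of (2): coherence of the multivariate perfectoidish Tate algebra $R\lr{X_1,\ldots,X_n}_\infty$, which rests entirely on the "descend to a Noetherian subring, present there, pass to the limit, then complete" mechanism of Proposition \ref{coherent1} and Corollary \ref{coherent2}, the delicate point being that $a$-adic completion must not destroy finite generation of the relation module — precisely where Proposition \ref{7.3/8} and \cite[Theorem 8.4]{matsumura1989commutative} enter. The descent step in (2) is essentially bookkeeping, but it crucially uses that $\id{I}_\infty$ is finitely generated: for an algebra that is only of topologically finite eka type the conclusion fails, as the sequence \eqref{eqsch1} produces a quotient with a non-finitely-presented finitely generated ideal.
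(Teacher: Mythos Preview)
Your proposal is correct but takes a different route for part (1) than the paper does. You simply invoke Corollary~\ref{coherent2} (and its obvious multivariate extension), which is entirely legitimate since the hypothesis $R=A[t,t^{1/p},\ldots]$ with $A$ Noetherian is exactly the setting of that corollary. The paper, by contrast, explicitly announces ``Let us prove the coherence property again'' and re-derives it via the Raynaud--Gruson analogue Theorem~\ref{tfppt}: it asserts that a finitely generated ideal $\id{a}\subset R\lr{T}_\infty$ is flat over $R$, that $\id{a}/t\id{a}$ lands in a Noetherian subring $A[t^{1/p^i}][T^{1/p^i}]$ and is hence finitely presented there, and then applies Theorem~\ref{tfppt} to lift the finite presentation back to $\id{a}$. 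What your approach buys is directness and independence from the flatness hypothesis on $\id{a}$ (which the paper asserts without justification); what the paper's approach buys is a demonstration that Theorem~\ref{tfppt} is strong enough to recover coherence, tying the section together.

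For part (2) both arguments are the same standard fact that a quotient of a coherent ring by a finitely generated ideal is coherent. You are more careful than the paper in noting that (2) genuinely requires the multivariate version of (1), and in spelling out the descent of a finite presentation along the quotient map; the paper's one-line ``from previous $\id{a}$ is coherent, thus $R\lr{T}_\infty/\id{a}$ is coherent'' leaves the passage to several variables implicit.
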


\begin{proof}
\begin{enumerate}
\item It needs to be shown that each finitely generated ideal $\id{a}$ of $R\lr{T}_\infty$ is finitely presented. But any finitely generated ideal is flat over $R$ and $\id{a}/t\id{a}$ will lie in $A[t^{1/p^i}][T^{1/p^i}]$ for $i$ large enough. Since, $A[t^{1/p^i}][T^{1/p^i}]$ is Notherian $\id{a}/t\id{a}$ is of finite presentation. Hence, by Theorem \ref{tfppt} $\id{a}$ has finite presentation.

\item
From previous $\id{a}$ is coherent, thus $R\lr{T}_\infty/\id{a}$ is coherent.

\end{enumerate}
\end{proof}

~

\section{Admissible Formal Schemes}

The recipe for topologically finite type can be followed to get admissible formal schemes. The big difference is that non notherian condition forces topologically finite eka presentation in place of topologically finite eka type as the base model to work with.

\begin{mdef}
Let $X$ be a formal $R$ scheme. $X$ is called locally of topologically finite eka type (resp. locally of topologically finite eka presentation, resp. admissible) if there is an open affine covering $(U_i)_{i\in J}$ of $X$ with $U_i=\spf A_i$ where $A_i$ is an $R$ algebra of topologically finite eka type (resp. of topologically finite eka presentation, resp. admissible).
\end{mdef}

\subsection{Topologically Perfectoid formal scheme}
\begin{proposition}\label{7.4/2}
Let $A$ be an $R$ algebra that is $I$ adically complete and separated, and let $X=\spf A$ be the associated formal $R$ scheme. Then the following are equivalent
\begin{enumerate}
\item $X$ is locally of topologically finite eka presentation (resp. admissible).
\item $A$ is topologically finite eka presentation (resp. admissible) as $R$ algebra.
\end{enumerate}
\end{proposition}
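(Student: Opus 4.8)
The implication $(2)\Rightarrow(1)$ is immediate: the trivial covering $\id{X}=\spf A$ witnesses that $\id{X}$ is locally of topologically finite eka presentation (resp. admissible) as soon as $A$ is. So the content lies entirely in $(1)\Rightarrow(2)$, and I would treat the two ``resp.'' statements by one and the same argument, carrying the word ``admissible'' through in place of ``topologically finite eka presentation'' and appealing to the corresponding ``resp.'' clauses of the cited results.

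For $(1)\Rightarrow(2)$, first recall that the underlying space of $\id{X}=\spf A$ is $\spc A/I$, hence quasi-compact, and that the basic opens $D(f)$ (for $f\in A$, with $\Os_\id{X}(D(f))=A\lr{f^{-1}}$) form a basis of its topology. Given an open affine covering $(U_i)_{i\in J}$ with $U_i=\spf A_i$ and each $A_i$ of topologically finite eka presentation, I would refine it: every point of $\id{X}$ lies in some $D(f)\subseteq U_i$, and by quasi-compactness finitely many of these, say $D(f_1),\ldots,D(f_r)$ with $D(f_j)\subseteq U_{i(j)}$, already cover $\id{X}$. Since they cover $\spc A/I$, the images of $f_1,\ldots,f_r$ generate the unit ideal of $A/I$; as $I$ is an ideal of definition and therefore contained in $\rad A$, the elements $f_1,\ldots,f_r$ generate the unit ideal of $A$ itself.

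Next I would pin down the rings of sections. On $U_{i(j)}\cong\spf A_{i(j)}$ the restriction of $\Os_\id{X}$ is, by the very definition of being locally of topologically finite eka presentation, the structure sheaf of $\spf A_{i(j)}$; and $D(f_j)$ is a basic open of $\spf A_{i(j)}$, namely $\spf\big(A_{i(j)}\lr{g_j^{-1}}\big)$, where $g_j$ is the image of $f_j$ under the restriction map $A=\Os_\id{X}(\id{X})\ra\Os_\id{X}(U_{i(j)})=A_{i(j)}$. Computing sections over $D(f_j)$ both in $\id{X}$ and in $U_{i(j)}$ gives a topological isomorphism $A\lr{f_j^{-1}}\cong A_{i(j)}\lr{g_j^{-1}}$. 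Writing $A_{i(j)}\lr{g_j^{-1}}=A_{i(j)}\lr{\xi}/(g_j\xi-1)$ and invoking that localization at an element preserves topologically finite eka presentation (the same manipulation that underlies Corollary \ref{7.3/12} and Corollary \ref{coro13}), I conclude that each $A\lr{f_j^{-1}}$ is of topologically finite eka presentation. Since $A$ is $I$-adically complete and separated and $f_1,\ldots,f_r$ generate the unit ideal, Corollary \ref{coro13} then yields that $A$ is of topologically finite eka presentation; the admissible variant follows word for word from the ``resp.'' clauses.

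The step I expect to be the real obstacle is the assertion in the previous paragraph that $A\lr{f^{-1}}=A\lr{\xi}/(f\xi-1)$ is again of topologically finite eka presentation when $A$ is. The eka power series rings $R\lr{X_1,\ldots,X_n}_\infty$ carry $p$-power (or rational) roots only of the variables $X_i$, whereas the localization introduces an ordinary new variable $\xi$; so one must check that $A\lr{\xi}/(f\xi-1)$ can still be written as $R\lr{Y_1,\ldots,Y_m}_\infty/\id{J}_\infty$ with $\id{J}_\infty$ finitely generated, for instance by realizing it inside $R\lr{Y_1,\ldots,Y_{n+1}}_\infty$ and killing the relation $f\xi-1$ together with the monomial relations that confine $\xi$ to integer powers. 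A lesser point still requiring care is the identification $\Os_\id{X}|_{U_{i(j)}}=\Os_{\spf A_{i(j)}}$ used above, which is part of the definition of being locally of topologically finite eka presentation and is precisely what legitimizes the isomorphism $A\lr{f_j^{-1}}\cong A_{i(j)}\lr{g_j^{-1}}$.
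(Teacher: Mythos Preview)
Your proof is correct and follows exactly the route the paper takes: the paper's entire proof is the single line ``Follows from Corollary \ref{coro13}'', and what you have written is precisely the standard unpacking of that reduction (refine to basic opens $D(f_j)$, use quasi-compactness to get $f_1,\ldots,f_r$ generating the unit ideal, identify $A\lr{f_j^{-1}}\cong A_{i(j)}\lr{g_j^{-1}}$, then invoke Corollary \ref{coro13}). The worry you raise in your final paragraph---whether $A\lr{\xi}/(f\xi-1)$ is again of topologically finite eka presentation when $A$ is, given that $\xi$ carries no fractional powers---is not a gap in \emph{your} argument but a question about the $(1)\Rightarrow(2)$ direction of Corollary \ref{coro13} itself, which the paper asserts (via Proposition \ref{7.3/10} and the Bosch reference) and which you are entitled to cite as a black box here.
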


\begin{proof}
Follows from Corollary \ref{coro13}.
\end{proof}

\begin{mdef}
A formal $R$ scheme $X$ is called of topologically finite eka presentation if it is locally of topologically finite eka presentation and quasi compact and quasi separated.
\end{mdef}
\subsection{The functor $\rig$}
In this section we assume the rings to be of locally of topologically finite eka presentation. There is a functor $\rig$ on affine formal $R$ schemes given as below
\begin{equation}
\rig: X=\spf{A}\mapsto X_\rig=\spa(A\otimes_R K)
\end{equation}
where $A$ is topologically finite eka presentation and $\spa(A\otimes_R K)$ is called the category of rigid $K$ spaces (constructed following \cite{ray1a}). It consists of maximal ideals of $A\otimes_R K$ along with functions on it.
\begin{equation}
\begin{aligned}
A\otimes_RK=S^{-1}(R\lr{T}_\infty/\id{I}), &\text{ where }S=R\bs\{0\}\\
R\lr{T}_\infty\subset S^{-1}(R\lr{T}_\infty)\subset K\lr{T}_\infty,& \text{ Canonical Inclusion }
\end{aligned}
\end{equation}

Following, Proposition 6.7.2 of  \cite[pp. 135]{fujiwara2018foundations} we know $K=R[1/\varpi]$ where $\varpi$ is a uniformiser. Since, $\lim c_\nu= 0$ for any series $f=\sum_\nu c_\nu T^\nu\in K\lr{T}_\infty,\nu\in\ds{N}[1/p]$, there is a constant $s\in R\bs\{0\}$ such that $s^{-1}f$ has coefficients in $R$, thus there is an equality $S^{-1}(R\lr{T}_\infty)= K\lr{T}_\infty$, and we get a well defined $X_\rig$.

Moreover, we can apply the above to localization (as in \cite[p. 171]{bosch2014lectures}), since for any $f\in A$ gives
\begin{equation}
\begin{aligned}
A\lr{f^{-1}}\otimes_R K&= \frac{A\lr{T}}{1-fT}\otimes_R K\\
&=\frac{(A\otimes_R K)\lr{T}}{1-fT}=A\otimes_R K\lr{f^{-1}}
\end{aligned}
\end{equation}

It can also be shown that Laurent Domains can be produced from basic open subspace
\begin{equation}
\begin{aligned}
& \begin{tikzpicture}
 []
        \matrix (m) [
            matrix of math nodes,
            row sep=2.5em,
            column sep=1.5em,
                   ]
{ |[name=q1]|A &  |[name=a1]|~~ & |[name=b1]|A\otimes_R K \\
  |[name=q2]|A\lr{f^{-1}} &  |[name=a2]|~~ & |[name=b2]|(A\otimes_R K)\lr{f^{-1}}\\
        };
        \path[overlay,->, font=\scriptsize,>=latex]
 (q1) edge (b1)
 (q1) edge  (q2)
  (q2) edge  (b2)
 (b1) edge (b2)
 ;
\end{tikzpicture}\\
X(f^{-1})&=\spf A\lr{f^{-1}}\subset X=\spf A\\
X_\rig(f^{-1})&=\spa(A\otimes_R K) \lr{f^{-1}}\subset X_\rig=\spa (A\otimes_R K)\\\
\end{aligned}
\end{equation}

The proof on page \cite[p. 171]{bosch2014lectures} can be modified (with a lot work) to give the following theorem \ref{7.4/3}. This is shown in an upcoming paper and is not used in the rest of the article. The main difficulty lies in constructing a well defined functor from the generic fibre $B\otimes_R K\ra A\otimes_R K$ to the corresponding space $\spa(B\otimes_R K)\ra \spa(A\otimes_R K)$.

\begin{theorem}\label{7.4/3}
Let $R$ be a valuation ring of height $1$ with field of fractions $K$ and $A$ an $R$ algebra of topologically finite eka type. The functor $A\mapsto A\otimes_RK$ gives rise to a functor $X\ra X_\rig$ from category of formal $R$ schemes that are locally of topologically finite eka type to category of $K$ spaces.
\end{theorem}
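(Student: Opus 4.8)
\textbf{Affine case.} The plan is to build the functor first on affine formal $R$-schemes of topologically finite eka type and then glue. Let $X=\spf A$ with $A=R\lr{T_1,\ldots,T_n}_\infty/\id{I}_\infty$, and set $X_\rig=\spa(A\otimes_R K)$. The first task is to describe $A\otimes_R K$ concretely: since $R$ has height $1$ we may write $K=R[1/\varpi]$ for a nonzero topologically nilpotent $\varpi\in R$, and the identity $S^{-1}(R\lr{T}_\infty)=K\lr{T}_\infty$ recorded above (with $S=R\bs\{0\}$) gives $A\otimes_R K=K\lr{T_1,\ldots,T_n}_\infty/\id{I}_\infty K\lr{T}_\infty$, a quotient of the perfectoidish Tate-type algebra $K\lr{T}_\infty$. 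One equips this with the Banach $K$-algebra topology coming from the Gauss norm on $K\lr{T}_\infty$, and then checks that $\mx(A\otimes_R K)$ together with the presheaf $D(f)\mapsto (A\otimes_R K)\lr{f^{-1}}$ is a well-defined object of the category of rigid $K$-spaces, i.e. that this presheaf is a sheaf and that rational and Laurent subdomains behave as in the classical Tate theory; here the coherence of $K\lr{T}_\infty$ (the field version of Corollary \ref{coherent2}) plays the role of Noetherianity in controlling finitely generated ideals. One subtlety to settle at this stage: when $\id{I}_\infty$ is not finitely generated one must still know that $\id{I}_\infty K\lr{T}_\infty$ is closed after inverting $\varpi$, so that $A\otimes_R K$ is an admissible object of the target category.

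\textbf{Morphisms.} A morphism $\spf B\to\spf A$ of affine formal $R$-schemes of topologically finite eka type is the same datum as a continuous $R$-algebra homomorphism $\varphi\colon A\to B$. The main step is to show that $\varphi\otimes_R K\colon A\otimes_R K\to B\otimes_R K$ is bounded for the canonical Banach topologies, and hence induces a morphism of rigid $K$-spaces $\spa(B\otimes_R K)\to\spa(A\otimes_R K)$. This is exactly where the eka setting departs from the classical one: $K\lr{T}_\infty$ and its coherent quotients are not classical affinoid algebras, so one cannot invoke automatic continuity of homomorphisms of affinoid $K$-algebras. Instead one uses that $A$ and $B$ are topological quotients of rings $R\lr{T}_\infty$ whose elements are restricted series with coefficients tending to $0$, together with the $I$-adic completeness and separatedness of finite modules over algebras of topologically finite eka type (Proposition \ref{7.3/8}), to verify that $\varphi$ carries power-bounded elements to power-bounded elements up to inverting $\varpi$. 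One then checks compatibility with identities and composition, and, crucially for the gluing, that the basic open immersions $X(f^{-1})\hookrightarrow X$ are sent to open immersions $X_\rig(f^{-1})\hookrightarrow X_\rig$; this follows from the identity $A\lr{f^{-1}}\otimes_R K=(A\otimes_R K)\lr{f^{-1}}$ and the Laurent-domain square displayed above.

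\textbf{Globalisation.} For a formal $R$-scheme $X$ that is locally of topologically finite eka type, choose an affine open covering $(U_i)_{i\in J}$ with $U_i=\spf A_i$, each $A_i$ of topologically finite eka type. Applying the affine construction to each $A_i$ gives $(U_i)_\rig=\spa(A_i\otimes_R K)$; applying the morphism step to the inclusions of basic opens covering the overlaps $U_i\cap U_j$ produces transition isomorphisms on $(U_i\cap U_j)_\rig$ that satisfy the cocycle condition on triple overlaps. Gluing then produces a $K$-space $X_\rig$, and independence of the chosen covering together with functoriality of $X\mapsto X_\rig$ in $X$ follow formally from the affine statements.

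\textbf{Main obstacle.} The heart of the matter is the boundedness verification in the second step: showing that a continuous $R$-algebra homomorphism between algebras of topologically finite eka type induces a morphism of rigid $K$-spaces after $-\otimes_R K$. Equivalently, one must check that the category of rigid $K$-spaces built from the non-Noetherian algebras $K\lr{T}_\infty/\id{J}$ genuinely has the expected morphisms and subdomains --- this is the technical content whose full treatment is deferred. Once it is available, the affine functoriality, the compatibility with localization, and the gluing are routine adaptations of the argument in \cite[Ch.~8]{bosch2014lectures}.
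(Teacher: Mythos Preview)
Your sketch is reasonable, but you should know that the paper itself does \emph{not} prove this theorem. The paragraph preceding Theorem~\ref{7.4/3} states explicitly that the proof ``is shown in an upcoming paper and is not used in the rest of the article,'' and only records that ``the main difficulty lies in constructing a well defined functor from the generic fibre $B\otimes_R K\to A\otimes_R K$ to the corresponding space $\spa(B\otimes_R K)\to \spa(A\otimes_R K)$.'' So there is no proof in the paper to compare against; the statement is essentially announced.

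That said, your identification of the core obstacle is exactly right and matches the paper's own diagnosis: the step where a continuous $R$-algebra map $\varphi\colon A\to B$ must be upgraded to a morphism of $K$-spaces $\spa(B\otimes_R K)\to\spa(A\otimes_R K)$ is the nontrivial content, and you are correct that automatic continuity/boundedness for classical affinoid algebras is unavailable here because $K\lr{T}_\infty$ and its quotients are not Noetherian affinoid. Your affine description, the Laurent-domain compatibility, and the gluing outline are all in line with the preparatory material the paper sets up in \S\ref{top1} and the $\rig$ subsection. One caution: you invoke Proposition~\ref{7.3/8} to control power-bounded elements, but that proposition concerns $I$-adic completeness of finite modules and does not by itself give you the boundedness of $\varphi\otimes_R K$; bridging that gap is precisely the ``lot of work'' the paper defers. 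Also, your remark about needing $\id{I}_\infty K\lr{T}_\infty$ to be closed when $\id{I}_\infty$ is not finitely generated is a genuine issue the paper does not address --- coherence of $K\lr{T}_\infty$ only helps for finitely generated ideals, so the target category may need to be restricted to the finite eka \emph{presentation} case (as the paper does elsewhere) for the statement to go through cleanly.
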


\section{Perfectoid Algebras}
Let $K$ be a perfectoid field and $\wbr{K}$ denote its algebraic closure, and $T$ denote the multi-index $\{X_1,\ldots, X_n\}$. 

The perfectoid algebras will be denoted by $\rr$ (reflected $R$), this can be done by command {
\verb!\DeclareMathOperator{\rr}{\reflectbox{\ensuremath{R}}}! } \\ (or just  {
\verb!\DeclareMathOperator{\rr}{\reflectbox{\text{R}}}! } to avoid complications of font selection) in the preamble and using \verb!\rr! to get $\rr$. This new symbol looks like \emph{pra} in Devnagri (implying perfectoid), and symbols like $\rr_n$ would translate to \emph{pran} meaning life or better $\rr_m$ \emph{prem} for love. 

\begin{mdef}
\begin{enumerate}
\item The $K$ algebra $\rr_n:=K\lr{X_1,\ldots,X_n}_\infty$ of all formal power series
\begin{equation}
\sum_{i\in{\ds{N}[1/p]}^n}c_iT^i\in K[[T]],\qquad c_i\in K,\qquad\lim_{i\ra\infty}c_i=0
\end{equation}
is called perfectoid Tate algebra and the terms can be ordered by degree using the ordering of rational numbers.
\item Let $\id{a}$ be an ideal of $\rr_n$, then algebras of the form $A=\rr_n/\id{a}$ are called perfectoid affinoid $K$ algebras. In other words, there is an epimorphism $\rr_n\ra A$ for some $n\in\ds{N}$.
\end{enumerate}

\end{mdef}
The definition implies that $\rr_n$ converges on the unit disk $\ds{B}^n(\wbr{K})$. 

In this tract the field $K$ perfectoid is replaced by field of fractions of an $\eka^p$ ring and $\wbr{K}$ is consequently meant to say completed $\eka^p$ ring.

\section{Coherent Modules}\label{coherence1}
Let $A$ be a topologically finite eka presentation type $R$ algebra and $M$ an $A$ module. Let $X=\spf(A)$ be the corresponding formal scheme, then we can associate $M^\Delta$ to $M$ defined as 
\begin{equation}
M^\Delta(D_f)=\varprojlim_{n\in\ds{N}}M\otimes_AA_n[f^{-1}]\qquad\text{where}\qquad A_n=A/I^{n+1}
\end{equation}
with $f\in A$ and $I$ is the ideal of definition. 
\begin{proposition}
Let $X=\spf A$ be a formal $R$ scheme of topologically finite eka presentation. If $M$ is an finite $A$ module, the sheaf $M^\Delta$ on the open sets  $D(f),f\in A$ is given by the functor
\begin{equation}
D_f\mapsto M\otimes_A A\lr{f^{-1}}.
\end{equation}

\end{proposition}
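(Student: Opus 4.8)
The plan is to verify the asserted description of $M^\Delta$ on the basic opens $D_f$ directly from the definition $M^\Delta(D_f)=\varprojlim_{n}M\otimes_A A_n[f^{-1}]$, compatibly with restriction, and thereby identify the restriction of the sheaf $M^\Delta$ to the basis $\{D_f\}_{f\in A}$ with the stated functor. First I would settle the case $M=A$: by the construction of $\Os_X$ recalled above, $\Os_X(D_f)=A\lr{f^{-1}}$, and since each $A_n=A/I^{n+1}$ is discrete we have $A_n[f^{-1}]=A_n\lr{f^{-1}}$, so
\[
A^\Delta(D_f)=\varprojlim_n A_n[f^{-1}]=\varprojlim_n A_n\lr{f^{-1}}=A\lr{f^{-1}},
\]
the last equality being the fact that $A\lr{T}/(1-fT)=\varprojlim_n A_n[T]/(1-fT)$ (completion commutes with this finitely presented quotient, since the transition maps are surjective). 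The case $M=A^s$ then follows at once, because finite direct sums commute both with $-\otimes_A A_n[f^{-1}]$ and with $\varprojlim_n$.

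For a general finite $M$ I would choose a finite presentation $A^r\xrightarrow{\varphi}A^s\to M\to 0$; this is available because $A$ is of topologically finite eka presentation and hence coherent by Lemma \ref{7.3/6}, so that the finite modules under consideration are of finite presentation (note that mere finite generation is genuinely not enough here, cf. Remark \ref{nonnother1}). Since $-\otimes_A A_n[f^{-1}]$ is right exact, for every $n$ the sequence
\[
A_n[f^{-1}]^r\xrightarrow{\varphi_n}A_n[f^{-1}]^s\to M\otimes_A A_n[f^{-1}]\to 0
\]
is exact. Writing $K_n=\ker\big(A_n[f^{-1}]^s\to M\otimes_A A_n[f^{-1}]\big)$, the transition maps $A_{n+1}[f^{-1}]\to A_n[f^{-1}]$ are surjective (as $A_{n+1}\to A_n$ is), hence so are the transition maps of the systems $A_n[f^{-1}]^r$, $A_n[f^{-1}]^s$ and $K_n$; all four systems are therefore Mittag–Leffler, and $\varprojlim_n$ is exact on the short exact sequences $0\to K_n\to A_n[f^{-1}]^s\to M\otimes_A A_n[f^{-1}]\to 0$ and on the surjections $A_n[f^{-1}]^r\to K_n$. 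Combining this with the cases already handled gives an exact sequence
\[
A\lr{f^{-1}}^r\xrightarrow{\varphi\otimes 1}A\lr{f^{-1}}^s\to M^\Delta(D_f)\to 0,
\]
so that $M^\Delta(D_f)=\ckr(\varphi\otimes 1)$.

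On the other hand, applying the right exact functor $-\otimes_A A\lr{f^{-1}}$ to the same presentation of $M$ yields $M\otimes_A A\lr{f^{-1}}=\ckr(\varphi\otimes 1)$ for exactly the same map $\varphi\otimes 1$. Hence $M^\Delta(D_f)\cong M\otimes_A A\lr{f^{-1}}$. Because tensor products, the inverse limit $\varprojlim_n$, and cokernels are all functorial in $f$, this isomorphism intertwines the restriction maps attached to inclusions $D_g\subseteq D_f$, so it is an isomorphism of functors on the basis $\{D_f\}$, as claimed. I expect the one delicate point to be the exactness of $\varprojlim_n$ on the presentation: it rests entirely on the surjectivity of the transition maps, which is why it is essential that $I$ be finitely generated and that $M$ be taken in a finitely presented form rather than merely finitely generated — this is precisely the place where the non-Noetherian subtlety flagged in Remark \ref{nonnother1} is kept under control.
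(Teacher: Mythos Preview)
Your argument is correct for finitely presented $M$, but there is a gap in the reduction to that case. You write that since $A$ is coherent (Lemma \ref{7.3/6}), ``the finite modules under consideration are of finite presentation''. Coherence of $A$ says only that every finitely generated \emph{ideal} (or, equivalently, every finitely generated submodule of a finitely presented module) is finitely presented; it does \emph{not} say that every finitely generated $A$-module is finitely presented. Over a non-Noetherian coherent ring there are finitely generated modules that are not finitely presented --- Remark \ref{nonnother1} is precisely such a warning. So as written, your proof establishes the proposition only under the extra hypothesis that $M$ is finitely presented.

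The paper's proof avoids this difficulty by not passing through a presentation at all. It observes that $A\lr{f^{-1}}$ is again of topologically finite eka presentation (Corollary \ref{coro13}), so $M\otimes_A A\lr{f^{-1}}$ is a finite module over an algebra of topologically eka type and is therefore $I$-adically complete and separated by Proposition \ref{7.3/8}. Since $M^\Delta(D_f)$ is by definition the $I$-adic completion of $M\otimes_A A[f^{-1}]$, and the image of $M\otimes_A A[f^{-1}]$ is dense in the complete separated module $M\otimes_A A\lr{f^{-1}}$, the two coincide. This argument uses only finite generation of $M$, which is exactly the hypothesis, and is considerably shorter. Your Mittag--Leffler route is perfectly sound once a finite presentation is available, and it has the advantage of being self-contained (it does not invoke Proposition \ref{7.3/8}); but to cover the stated proposition you would either need to strengthen the hypothesis to ``$M$ coherent'' or supply a separate argument for merely finite $M$.
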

\begin{proof}
From corollary \ref{coro13}, $A\lr{f^{-1}}$ is an $R$ algebra of topologically finite eka presentation and proposition \ref{7.3/8} gives $M\otimes_A A\lr{f^{-1}}$ as a finite $A\lr{f^{-1}}$ module is $I$ adically complete and separated. But, $M^{\Delta}(D_f)$ can be considered as the $I$ adic completion of $M\otimes_A A[f^{-1}]$ which is dense in $M\otimes_A A\lr{f^{-1}}$.
\end{proof}

\begin{corollary}\label{8coro2}
Let $X=\spf A$ be a formal $R$ scheme of topologically finite eka presentation, and thus coherent,
\begin{enumerate}
\item There is a fully faithful exact functor $\mathcal{F}:M\ra M^\Delta$ given as
\begin{align*}
\text{Category of coherent $A$ modules}&\ra\text{Category of $\curly{O}_X$ modules}
\end{align*}
\item In the category of $A$ modules the functor $\mathcal{F}$ commutes with kernel, cokernel, image and tensor product. Additionally, the sequence
\begin{equation}
0\ra M'\ra M\ra M''\ra 0
\end{equation}
is exact if and only if the sequence below is exact,
\begin{equation}
0\ra M'^\Delta\ra M^\Delta\ra M''^\Delta\ra 0.
\end{equation}

\end{enumerate}

\end{corollary}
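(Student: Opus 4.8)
The plan is to reduce every assertion to the basic opens $D(f)$, $f\in A$, where the preceding proposition identifies the sections $M^\Delta(D_f)$ with $M\otimes_A A\lr{f^{-1}}$, and then to feed in two structural facts: $A$ is coherent (Lemma \ref{7.3/6}), and each map $A\ra A\lr{f^{-1}}$ is flat while $A\ra\prod_i A\lr{f_i^{-1}}$ is faithfully flat when $f_1,\ldots,f_r$ generate the unit ideal (Corollary \ref{7.3/12}). Coherence of $A$ makes coherent $A$-modules into an abelian category and keeps the kernel, cokernel and image of a map of coherent modules coherent, hence finite, so the preceding proposition applies to all of them. Taking $f=1$, so $A\lr{1^{-1}}=A$, and using Proposition \ref{7.3/8}, one gets $\Gamma(X,P^\Delta)=P$ for coherent $P$ and $\Gamma(X,\varphi^\Delta)=\varphi$ on morphisms, i.e. $\Gamma(X,-)\circ\mathcal{F}=\mathrm{id}$.

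\textbf{Exactness and commutation with $\kr$, $\ckr$, $\Ima$.} For $\varphi\colon M\ra N$ between coherent $A$-modules, the base change $-\otimes_A A\lr{f^{-1}}$ is exact on each $D(f)$, hence commutes with $\kr$, $\ckr$, $\Ima$; the standard quasi-coherent-sheaf argument on the basis $\{D(f)\}$ then gives $(\kr\varphi)^\Delta=\kr(\varphi^\Delta)$, $(\ckr\varphi)^\Delta=\ckr(\varphi^\Delta)$, $(\Ima\varphi)^\Delta=\Ima(\varphi^\Delta)$. Thus $\mathcal{F}$ is additive and commutes with kernels and cokernels, so it is exact; and since $\Gamma(X,-)\circ\mathcal{F}=\mathrm{id}$, $\mathcal{F}$ is conservative and therefore also reflects exactness. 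In particular $0\ra M'\ra M\ra M''\ra 0$ is exact if and only if $0\ra M'^\Delta\ra M^\Delta\ra M''^\Delta\ra 0$ is.

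\textbf{Tensor product and full faithfulness.} Pick a finite presentation $A^m\ra A^n\ra M\ra 0$ ($M$ being coherent). Applying the now-established exact functor $\mathcal{F}$ and the right-exact functor $-\otimes_A N$ in the two possible orders, and using $(A^k)^\Delta\cong(\Os_X)^k$, produces two right-exact presentations with identical first two terms, hence a canonical isomorphism $(M\otimes_A N)^\Delta\cong M^\Delta\otimes_{\Os_X}N^\Delta$. For full faithfulness, send $\varphi\colon M\ra N$ to the morphism $\varphi^\Delta$ with $(\varphi^\Delta)|_{D(f)}=\varphi\otimes_A\mathrm{id}_{A\lr{f^{-1}}}$; these are compatible under $D(g)\subset D(f)$ and glue to an $\Os_X$-morphism. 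Its inverse is $\psi\mapsto\Gamma(X,\psi)$: indeed $\Gamma(X,\varphi^\Delta)=\varphi$, and for $\psi\colon M^\Delta\ra N^\Delta$, naturality of $\psi$ along $D(f)\hookrightarrow X$ with $\Os_X$-linearity forces $\psi|_{D(f)}$ to equal $\Gamma(X,\psi)\otimes_A\mathrm{id}_{A\lr{f^{-1}}}$ on the image of $M$, which generates $M\otimes_A A\lr{f^{-1}}$, so $\psi=\Gamma(X,\psi)^\Delta$. Hence $\varphi\mapsto\varphi^\Delta$ is a bijection $\Hom_A(M,N)\xra{\sim}\Hom_{\Os_X}(M^\Delta,N^\Delta)$.

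The main obstacle is the departure from the Noetherian setting: there finite and finitely presented modules coincide, while here one must restrict to \emph{coherent} $A$-modules, and the thing that makes this work is precisely the coherence of $A$ (Lemma \ref{7.3/6}) --- it keeps $\kr\varphi$, $\ckr\varphi$, $\Ima\varphi$ and the finite presentation of $M$ available, which is exactly what the preceding proposition, the basis-locality arguments, and the tensor computation need. Granting coherence of $A$ and the flatness statements of Corollary \ref{7.3/12}, the rest is a faithful transcription of the Noetherian case in Chapter~8 of \cite{bosch2014lectures}; the one further point to handle with care is that $M^\Delta$ really is a sheaf of $\Os_X$-modules, determined by its sections over the $D(f)$, which comes from faithfully flat descent along $A\ra\prod_i A\lr{f_i^{-1}}$ together with the completeness assertion of Proposition \ref{7.3/8}.
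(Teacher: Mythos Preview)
Your proposal is correct and follows essentially the same approach as the paper: the paper's proof simply says ``Same as Corollary 2 \cite[p.~176--177]{bosch2014lectures} with coherence coming from Lemma \ref{7.3lemma6},'' and you have written out in detail exactly that argument, correctly identifying the two non-Noetherian ingredients (coherence of $A$ from Lemma \ref{7.3/6} and the flatness/faithful-flatness statements of Corollary \ref{7.3/12}) that make the Bosch proof go through.
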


\begin{proof}
Same as Corollary 2 \cite[p. 176-177]{bosch2014lectures} with coherence coming from the lemma \ref{7.3lemma6}. Moreover, the coherence of the module is already assumed (rather than just finiteness condition).
\end{proof}

\begin{mdef}
Let $X$ be a formal $R$ scheme and $\curly{M}$ an $\Os_X$-module.
\begin{enumerate}
\item $\curly{M}$ is called of finite type, if there is an open covering of $X$ given by $(X_i)_{i\in J}$ and corresponding exact sequence
\begin{equation}
\Os_{X}^{s_i}\vert_{X_i}\ra \curly{M}{\vert_{X_i}}\ra 0,\qquad i\in J.
\end{equation}
\item $\curly{M}$ is called of finite presentation, if there is an open covering of $X$ given by $(X_i)_{i\in J}$ and corresponding exact sequence
\begin{equation}
\Os_{X}^{r_i}\vert_{X_i}\ra\Os_{X}^{s_i}\vert_{X_i}\ra \curly{M}{\vert_{X_i}}\ra 0,\qquad i\in J.
\end{equation}
\item $\curly{M}$ is called coherent, if it is of finite type and if the kernel of the morphism below is finite type
\begin{equation}
\Os_{X}^s\vert_U\ra\curly{M}\vert_U\text{ where $U$ is open subscheme of $X$}.
\end{equation}
\end{enumerate}

\end{mdef}

\begin{corollary}\label{8rem4}
Let $\curly{M}$ be an $\Os_\id{X}$ module, where $\id{X}$ is a formal $R$ scheme of topologically finite eka presentation, then the following are equivalent
\begin{enumerate}
\item $\curly{M}$ is coherent.
\item $\curly{M}$ is of finite presentation.
\item $\curly{M}{\vert_{X_i}}$ is associated to coherent $\Os_{X_i}$ module, where $(X_i)_{i\in J}$ is a covering of $\id{X}$.
\end{enumerate}

\end{corollary}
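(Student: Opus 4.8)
The statement is local on $\id{X}$, so it suffices to treat a single affine piece $\id{X}=\spf A$ with $A$ an $R$ algebra of topologically finite eka presentation, since coherence, finite presentation, and the property in (3) are all formulated via open affine coverings. By Lemma \ref{7.3lemma6} such an $A$ is coherent, and hence $\Os_\id{X}$ is itself a coherent sheaf. The plan is to prove the cycle $(2)\Rightarrow(1)\Rightarrow(3)\Rightarrow(2)$, using throughout the fully faithful exact functor $M\mapsto M^\Delta$ of Corollary \ref{8coro2}, whose behaviour on basic opens $D(f)$ is $M\otimes_A A\lr{f^{-1}}$ (the proposition preceding Corollary \ref{8coro2}).

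For $(2)\Rightarrow(1)$ and $(3)\Rightarrow(2)$ the work is formal. If $\curly{M}$ is of finite presentation, then on each member $X_i=\spf A_i$ of an affine covering we have an exact sequence $\Os_{X_i}^{r_i}\ra\Os_{X_i}^{s_i}\ra\curly{M}\vert_{X_i}\ra 0$; since $A_i$ is coherent (Lemma \ref{7.3lemma6}), a finitely presented module over it is coherent, so the kernel of any $\Os_{X_i}^s\vert_U\ra\curly{M}\vert_U$ is of finite type, and gluing over the covering gives coherence of $\curly{M}$. Conversely, if $\curly{M}\vert_{X_i}\cong M_i^\Delta$ with $M_i$ a coherent $A_i$ module, pick a finite presentation $A_i^{r_i}\ra A_i^{s_i}\ra M_i\ra 0$ (available since $A_i$ is coherent) and apply the exact functor $(-)^\Delta$ of Corollary \ref{8coro2} to obtain $\Os_{X_i}^{r_i}\ra\Os_{X_i}^{s_i}\ra\curly{M}\vert_{X_i}\ra 0$, so $\curly{M}$ is of finite presentation.

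The substantive implication is $(1)\Rightarrow(3)$, which I would reduce to the following affine assertion: if $A$ is coherent of topologically finite eka presentation and $\curly{M}$ is a coherent $\Os_\id{X}$ module on $\id{X}=\spf A$, then $M:=\curly{M}(\id{X})$ is a coherent $A$ module and $\curly{M}\cong M^\Delta$. First I would show $M$ is of finite type: cover $\id{X}$ by basic opens $D(f)$ on which $\curly{M}$ is generated by finitely many sections, use quasi-compactness of $\id{X}$, and descend generators using that the maps $A\ra A\lr{f^{-1}}$ are flat (Corollary \ref{7.3/12}) together with the completeness/separatedness statement of Proposition \ref{7.3/8}. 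Coherence of $\curly{M}$ then forces the module of relations to be of finite type as well, so $M$ is coherent over $A$. Finally $\curly{M}\cong M^\Delta$ because both agree on each $D(f)$: the right side is $M\otimes_A A\lr{f^{-1}}$, which is the $I$ adic completion of $M\otimes_A A[f^{-1}]$, and $M\otimes_A A\lr{f^{-1}}$ is already $I$ adically complete and separated by Proposition \ref{7.3/8}, matching $\curly{M}(D(f))$.

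I expect the main obstacle to be exactly this affine step in $(1)\Rightarrow(3)$: passing from a coherent $\Os$ module on $\spf A$ to the sheaf associated to a genuine coherent $A$ module, i.e. the descent-of-generators argument combined with verifying $\curly{M}\cong M^\Delta$. Everything else reduces to coherence of $A$ (Lemma \ref{7.3lemma6}) and the already-established formal properties of $(-)^\Delta$. One caveat worth recording is that Lemma \ref{7.3lemma6} is proved for $\eka^d$ rings of the shape $R=A_0[t,t^{1/p},t^{1/p^2},\ldots]$ with $A_0$ Noetherian, so the argument is carried out under that hypothesis on $R$, which is the standing setting of the coherence results in the preceding section.
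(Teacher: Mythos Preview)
Your argument is correct in outline, but you have organized the cycle in a way that forces you to prove something strictly harder than what is required. The paper's proof is essentially: (1) and (2) are equivalent once you know $\Os_\id{X}$ is coherent (Lemma~\ref{7.3lemma6}); then, assuming (2), one reduces to $\id{X}=\spf A$ with an exact sequence $(A^r)^\Delta\ra(A^s)^\Delta\ra\curly{M}\ra 0$, and Corollary~\ref{8coro2} identifies $\curly{M}$ with $M^\Delta$ for $M=\ckr(A^r\ra A^s)$, which is coherent since $A$ is. That is the whole content of $(2)\Rightarrow(3)$, and $(3)\Rightarrow(2)$ is exactly your argument.

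By contrast, your route $(1)\Rightarrow(3)$ attempts a ``Theorem~A'' statement: starting only from coherence of $\curly{M}$, you want $\curly{M}\cong\curly{M}(\id{X})^\Delta$ with $\curly{M}(\id{X})$ coherent. This is true but is the genuinely delicate part you flag, and it is entirely avoidable here. The missing observation is that $(1)\Rightarrow(2)$ is just as immediate as your $(2)\Rightarrow(1)$: a coherent sheaf is of finite type, so locally $\Os^s\twoheadrightarrow\curly{M}$, and coherence of $\curly{M}$ makes the kernel of finite type, giving a local finite presentation directly. Once you have $(1)\Leftrightarrow(2)$, the passage to (3) is the one-line cokernel computation above via Corollary~\ref{8coro2}, and your descent-of-generators and global-sections comparison are not needed. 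Your caveat about the standing hypothesis on $R$ in Lemma~\ref{7.3lemma6} is well taken and matches the paper's usage.
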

\begin{proof}
The first two are equivalent from the definitions. Assume that $\id{X}=\spf A$ where $A$ is an $R$ algebra of topologically finite eka presentation equipped with an exact sequence
\begin{equation}
(A^r)^\Delta\ra (A^s)^\Delta\ra\curly{M}\ra 0.
\end{equation}
The proof in Remark 8 \cite[p. 177-178]{bosch2014lectures} applies with coherence coming from the lemma \ref{7.3lemma6}.

\end{proof}

\section{Admissible Formal Blowing Up}\label{blowup1a}
The purpose of this section is to prove that formal blowing up of a formal scheme gives a formal scheme again. Since, we are working with non notherian rings Gabber's Lemma would be crucial.
\subsection{Gabber's Lemma}
The first lemma is taken verbatim from \cite[pp. 180]{bosch2014lectures}
\begin{lemma}\label{8.2/1}
Let $M$ be an $A$ module, and $\pi\in A$ a non zero divisor in $A$, then the following are equivalent:
\begin{enumerate}
\item $M$ is flat over $A$.
\item The torsion of $\pi$
\[(\pi\text{ torsion})_M=\{x\in M\text{ such that }\pi^nx=0\text{ for some }n\in\ds{N}\}\]
is trivial in $M$, $M/\pi M$ is flat over $A/\pi A$, and $M\otimes_A A[\pi^{-1}]$ is flat over $A[\pi^{-1}].$
\end{enumerate}
\end{lemma}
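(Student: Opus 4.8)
The plan is to prove the two implications separately; $(1)\Rightarrow(2)$ is formal and $(2)\Rightarrow(1)$ carries the content. For $(1)\Rightarrow(2)$ I would argue entirely by base change: tensoring the injection $0\to A\xrightarrow{\pi}A$ (injective because $\pi$ is a nonzerodivisor on $A$) with the flat module $M$ shows that $\pi$ acts injectively on $M$, so the $\pi$-torsion of $M$ is trivial; and $M/\pi M=M\otimes_A(A/\pi A)$ together with $M\otimes_A A[\pi^{-1}]$ are flat over $A/\pi A$ and over $A[\pi^{-1}]$ respectively, since flatness is preserved under base change. For $(2)\Rightarrow(1)$ I would use the ideal criterion for flatness: it suffices to show $\mathrm{Tor}_1^A(A/\mathfrak a,M)=0$ for every finitely generated ideal $\mathfrak a\subset A$. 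Fix such an $\mathfrak a$, put $N=A/\mathfrak a$ and $T=\mathrm{Tor}_1^A(N,M)$; the target is $T=0$.

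The idea is to show that $T$ is at once $\pi$-power torsion (every element killed by some $\pi^n$) and $\pi$-torsion-free, which forces $T=0$. The first is immediate from the third hypothesis in $(2)$: localization is flat, so $\mathrm{Tor}$ commutes with $A\to A[\pi^{-1}]$, giving $T\otimes_A A[\pi^{-1}]=\mathrm{Tor}_1^{A[\pi^{-1}]}(N[\pi^{-1}],M[\pi^{-1}])=0$ because $M[\pi^{-1}]$ is $A[\pi^{-1}]$-flat; since $T\otimes_A A[\pi^{-1}]=\varinjlim(T\xrightarrow{\pi}T\xrightarrow{\pi}\cdots)$, every $t\in T$ satisfies $\pi^n t=0$ for some $n$. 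For the second, I would use that $M$ has no $\pi$-torsion, so $0\to M\xrightarrow{\pi}M\to M/\pi M\to 0$ is exact; the long exact $\mathrm{Tor}^A(N,-)$ sequence then contains $\mathrm{Tor}_2^A(N,M/\pi M)\to T\xrightarrow{\pi}T$, so $\ker(\pi\colon T\to T)$ is a quotient of $\mathrm{Tor}_2^A(N,M/\pi M)$, and it is enough to see this group vanishes.

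The vanishing of $\mathrm{Tor}_2^A(N,M/\pi M)$ is the step I expect to require the most care, and it is exactly where the remaining hypotheses get used. Since $\pi$ is a nonzerodivisor on $A$, the sequence $0\to A\xrightarrow{\pi}A\to A/\pi A\to 0$ is a free resolution of $A/\pi A$ of length one, so $\mathrm{Tor}_i^A(N,A/\pi A)=0$ for $i\geq 2$; equivalently, for an $A$-free resolution $P_\bullet\to N$ the complex $P_\bullet\otimes_A(A/\pi A)$ has homology concentrated in degrees $0$ and $1$. Applying the exact functor $-\otimes_{A/\pi A}(M/\pi M)$ — exact because $M/\pi M$ is flat over $A/\pi A$, and exact functors commute with homology — gives $\mathrm{Tor}_i^A(N,M/\pi M)=\mathrm{Tor}_i^A(N,A/\pi A)\otimes_{A/\pi A}(M/\pi M)$, which is $0$ for $i\geq 2$. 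Hence $\pi$ is injective on $T$, and together with the previous paragraph any $t\in T$ with $\pi^n t=0$ must vanish, so $T=0$ and $M$ is flat over $A$. Along the way I would keep track that all three conditions in $(2)$ are genuinely needed: $\pi$-torsion-freeness for the exactness of $0\to M\xrightarrow{\pi}M\to M/\pi M\to 0$, flatness after inverting $\pi$ to make $T$ $\pi$-power torsion, and flatness of $M/\pi M$ over $A/\pi A$ for the $\mathrm{Tor}_2$ vanishing. As the statement is quoted verbatim from \cite[p.~180]{bosch2014lectures}, one may instead simply appeal to the proof given there.
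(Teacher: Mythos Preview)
Your argument is correct, and your closing remark is exactly what the paper does: the paper gives no proof at all for this lemma, it merely states that the result is taken verbatim from \cite[p.~180]{bosch2014lectures}. So there is nothing to compare against, and your self-contained proof via the ideal criterion---showing $T=\mathrm{Tor}_1^A(A/\mathfrak a,M)$ is both $\pi$-power torsion (from flatness after inverting $\pi$) and $\pi$-torsion-free (from the long exact sequence and the vanishing $\mathrm{Tor}_2^A(N,M/\pi M)=\mathrm{Tor}_2^A(N,A/\pi A)\otimes_{A/\pi A}(M/\pi M)=0$)---goes well beyond what the paper supplies.
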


The Gabber's result lemma 2 \cite[pp. 181]{bosch2014lectures} is for $R$ algebra of topologically finite type where the rings are Notherian. Here this result is extended to topologically finite eka presentation where the rings are Non Notherian by using results from \cite{fujiwara2018foundations}.

The first result is \cite[pp. 160 Lemma 7.4.9]{fujiwara2018foundations}.
\begin{lemma}\label{7.4.9}
Let $A$ be a ring with ideal $I$ and $M$ an $A$ module and $N$ an $A$ submodule. The topology on $N$ defined by the induced filtration
\begin{equation}
  H^\bullet =\{N\cap I^nM\}_{n\geq 0}
\end{equation}
is $I$ adic if and only if for any $n\geq 0$ there exists $m\geq 0$ such that $N\cap I^mM\subseteq I^nN.$
\end{lemma}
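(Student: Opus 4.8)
The plan is to reduce this to the elementary principle that two decreasing filtrations of an $A$-module by submodules define the same linear topology exactly when they are mutually cofinal, each member of one containing some member of the other. First I would check that both $\{I^nN\}_{n\geq 0}$ and $H^\bullet=\{N\cap I^nM\}_{n\geq 0}$ are $I$-filtrations of $N$: the only nonformal point is $I\,(N\cap I^nM)\subseteq N\cap I^{n+1}M$, which holds because $I(N\cap I^nM)\subseteq IN\subseteq N$ and simultaneously $I(N\cap I^nM)\subseteq I\cdot I^nM=I^{n+1}M$. Hence each filtration defines a linear topology on $N$ having the respective family as a fundamental system of neighbourhoods of $0$, and the ``$I$-adic topology on $N$'' means, by definition, the one attached to $\{I^nN\}$.

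Next I would record the tautological inclusion $I^nN\subseteq N\cap I^nM$ (since $I^nN\subseteq N$ and $I^nN\subseteq I^nM$), valid for all $n$. This already shows the $H^\bullet$-topology is coarser than or equal to the $I$-adic topology; the two agree precisely when, conversely, each $I^nN$ is a neighbourhood of $0$ for the $H^\bullet$-topology, i.e.\ contains some $H^m=N\cap I^mM$. That is verbatim the stated condition, so it remains only to phrase the two implications.

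For the converse implication (the inclusion hypothesis $\Rightarrow$ the topology is $I$-adic): given $n$, pick $m$ with $N\cap I^mM\subseteq I^nN$; together with $I^nN\subseteq N\cap I^nM$ the filtrations $\{I^nN\}$ and $H^\bullet$ are cofinal in one another, so they define the same topology, which is the $I$-adic one. For the direct implication ($I$-adic $\Rightarrow$ the inclusion hypothesis): if the $H^\bullet$-topology is the $I$-adic topology, then $I^nN$, being a basic neighbourhood of $0$ for the $I$-adic topology, is open for the $H^\bullet$-topology, hence contains a basic open set $N\cap I^mM$ for some $m$ --- the desired inclusion.

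I do not expect a real obstacle here: the argument is a careful unwinding of what ``the induced topology on $N$ is $I$-adic'' means, resting on the trivial inclusion $I^nN\subseteq N\cap I^nM$. The only subtleties worth flagging are verifying that $H^\bullet$ is indeed an $I$-filtration so that it defines a linear topology in the first place, and insisting that ``$I$-adic topology on $N$'' be read as the topology with $\{I^nN\}_n$ as a neighbourhood basis of $0$, rather than as a separatedness or completeness assertion.
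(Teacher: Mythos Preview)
Your argument is correct: the two filtrations $\{I^nN\}$ and $H^\bullet=\{N\cap I^nM\}$ define linear topologies on $N$, the trivial inclusion $I^nN\subseteq N\cap I^nM$ gives one direction of cofinality automatically, and the stated condition is exactly the other direction, so the topologies agree precisely when it holds. The verification that $H^\bullet$ is an $I$-filtration is a nice touch but not strictly needed---descendance $H^{n+1}\subseteq H^n$ already suffices to make $H^\bullet$ a neighbourhood basis of $0$.

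As for comparison with the paper: there is nothing to compare. The paper does not prove this lemma; it quotes it verbatim from Fujiwara--Kato, \emph{Foundations of Rigid Geometry I}, Lemma 7.4.9, and immediately moves on to use it. Your write-up supplies the (entirely elementary) proof that the paper delegates to that reference, and it is the same argument one finds there.
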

The above lemma is the Krull condition \ref{krull} of remark \ref{ab1} and shown to hold in lemma \ref{7.3/7}.

The second result which requires the conditions of the Lemma \ref{7.4.9} is the following \cite[pp. 161 Proposition 7.4.11]{fujiwara2018foundations}.
\begin{proposition}\label{7.4.11}
Let $A$ be a ring with finitely generated ideal of defintion $I$ and it satisfies the conditions of the Lemma above. Then the exact sequence of finitely generated $A$ modules
\begin{equation}
  N\xra{g}M\xra{f}L
\end{equation}
induces exact sequence

\begin{equation}
    \rwhat{N}\xra{\rwhat{g}}\rwhat{M}\xra{\rwhat{f}}\rwhat{L}
\end{equation}

\end{proposition}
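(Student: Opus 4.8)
The plan is to reduce the three-term assertion to the case of a short exact sequence, treat that case by a Mittag--Leffler argument on inverse limits, and invoke the hypothesis on $A$ exactly once: to identify the completion of a submodule for the \emph{induced} filtration with its $I$-adic completion. That last point is precisely the Artin--Rees/Krull substitute arranged in Lemma \ref{7.3/7} and abstracted in Lemma \ref{7.4.9}; without it the statement genuinely fails, as the non-Noetherian example \eqref{eqsch1} shows.

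\emph{Reduction.} Put $K=\Ima g=\kr f\subseteq M$ and $L'=\Ima f\subseteq L$, so that
\[
0\to K\to M\to L'\to 0
\qquad\text{and}\qquad
0\to L'\to L\to L/L'\to 0
\]
are exact and $L'\cong M/K$. It suffices to prove the auxiliary claim: for a short exact sequence $0\to N_1\to M_1\to M_2\to 0$ of $A$-modules with $M_1$ finitely generated, the $I$-adic completions fit into an exact sequence $0\to\rwhat{N_1}\to\rwhat{M_1}\to\rwhat{M_2}\to 0$. Applying this to the two sequences above gives that $0\to\rwhat{K}\to\rwhat{M}\to\rwhat{L'}\to 0$ is exact and $\rwhat{L'}\to\rwhat{L}$ is injective, whence $\kr\rwhat{f}=\kr(\rwhat{M}\to\rwhat{L'})=\Ima(\rwhat{K}\to\rwhat{M})$. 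Since $N\twoheadrightarrow K$, the induced map $\rwhat{N}\to\rwhat{K}$ is surjective (for each $n$ the kernel of $N/I^{n+1}N\to K/I^{n+1}K$ has surjective transition maps, so $\varprojlim$ stays exact; cf.\ [Tag 0315]), hence $\Ima\rwhat{g}=\Ima(\rwhat{K}\to\rwhat{M})=\kr\rwhat{f}$, which is the desired exactness at $\rwhat{M}$.

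\emph{The short exact sequence case.} For each $n$ the sequence
\[
0\to N_1/(N_1\cap I^{n+1}M_1)\to M_1/I^{n+1}M_1\to M_2/I^{n+1}M_2\to 0
\]
is exact, because the quotient filtration on $M_2$ induced from the $I$-adic filtration on $M_1$ is again $I$-adic. The inverse system formed by the left-hand terms has surjective transition maps, hence is Mittag--Leffler, so $\varprojlim_n$ preserves its exactness and produces an exact sequence
\[
0\to\varprojlim_n\,N_1/(N_1\cap I^{n+1}M_1)\to\rwhat{M_1}\to\rwhat{M_2}\to 0 .
\]
Here the hypothesis enters: since $A$ satisfies the conditions of Lemma \ref{7.4.9} (verified for algebras of topologically finite eka presentation in Lemma \ref{7.3/7}), for every $n$ there is $m$ with $N_1\cap I^mM_1\subseteq I^nN_1$, so the induced filtration on $N_1$ is cofinal with $\{I^nN_1\}$ and the left term equals $\rwhat{N_1}$, the $I$-adic completion. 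This proves the auxiliary claim, and with the reduction the proposition follows.

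\emph{Main obstacle.} Everything except the last step is formal inverse-limit bookkeeping; the substance is the identification $\varprojlim_n N_1/(N_1\cap I^{n+1}M_1)=\rwhat{N_1}$, which is exactly where the non-Noetherian nature of $A$ would otherwise break the argument and where Lemma \ref{7.3/7} must be fed in. A secondary point requiring care is keeping track, in the diagram chase of the reduction, of which completions are $I$-adic and which are induced, and of the fact that each submodule to which the hypothesis is applied ($K\subseteq M$ and $L'\subseteq L$) sits inside a finitely generated module — which is why finite generation of $N$, $M$, $L$ is assumed.
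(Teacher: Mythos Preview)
The paper does not supply its own proof of this proposition; it is quoted verbatim from \cite[Proposition 7.4.11]{fujiwara2018foundations} and used as a black box in the proof of Proposition~\ref{G2}. Your argument is correct and is the standard one: reduce to short exact sequences, apply $\varprojlim$ to the level-$n$ exact sequences using Mittag--Leffler (the kernels have surjective transition maps), and invoke the Krull hypothesis of Lemma~\ref{7.4.9} precisely to identify the limit over the induced filtration on the submodule with its $I$-adic completion. Your bookkeeping in the reduction step is also sound: the two submodules to which the hypothesis is applied, $K\subseteq M$ and $L'\subseteq L$, sit inside finitely generated ambient modules as required, and the surjectivity of $\rwhat{N}\to\rwhat{K}$ follows from Mittag--Leffler without any finiteness on $K$.
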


The purpose is to prove the analogue of \cite[pp. 177 Proposition 8.2.18]{fujiwara2018foundations} without using the Notherian outside hypothesis. The analogous result required is the following.

\begin{proposition}\label{G2}
  Let $R$ be an admissible ring with finitely generated ideal of definition $I$, and $M$ be a finite $R$ module with presentation as $0\ra K\ra R^{\oplus n}\ra M\ra 0$. Furthermore the hypothesis of lemma \ref{7.4.9} is satisfied. Then $M\otimes_R\rwhat{R}\ra\rwhat{M}$ is an isomorphism.

\end{proposition}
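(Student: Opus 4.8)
The plan is to reduce the statement to the pair of results just quoted from Fujiwara--Kato, namely Lemma \ref{7.4.9} and Proposition \ref{7.4.11}, exactly as in the Noetherian argument of Lemma \ref{7.3/14} but now shorn of any finiteness hypothesis on $R$ itself. First I would take the given presentation
\begin{equation}
0\ra K\ra R^{\oplus n}\xra{p} M\ra 0,
\end{equation}
apply the right exact functor $-\otimes_R\rwhat{R}$, and observe that the induced map $h\colon M\otimes_R\rwhat{R}\ra\rwhat{M}$ is surjective (this is the usual statement that $I$-adic completion of a surjection of finite modules is a surjection, e.g. [Tag 0315]). So the whole content is the injectivity of $h$.

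Next I would set up the commutative diagram with exact rows
\begin{equation}
\begin{aligned}
K\otimes_R\rwhat{R}&\ra \rwhat{R}^{\oplus n}\ra M\otimes_R\rwhat{R}\ra 0\\
0\ra \wbr{K}\ \ &\ra \rwhat{R}^{\oplus n}\ra \rwhat{M}\ra 0,
\end{aligned}
\end{equation}
where $\wbr{K}$ denotes the closure of the image of $K$ in $\rwhat{R}^{\oplus n}$ (equivalently $\rwhat K$ under the hypothesis, as I explain below), the middle vertical map is the identity, the rightmost is $h$, and the leftmost is the natural map $f\colon K\otimes_R\rwhat R\ra\wbr K$. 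A diagram chase (snake lemma) reduces $\ker h=0$ to surjectivity of $f$. Here is where Proposition \ref{7.4.11} enters: applying $I$-adic completion to the exact sequence of finite $R$-modules $K\ra R^{\oplus n}\ra M$ yields an exact sequence $\rwhat K\ra\rwhat R^{\oplus n}\ra\rwhat M$, so the image of $\rwhat K$ in $\rwhat R^{\oplus n}$ is exactly $\wbr K$; and since $K\otimes_R\rwhat R\to \rwhat K$ surjects (again completion of a surjection $R^{\oplus m}\twoheadrightarrow K$ for any finite generating set), composing shows $f$ is surjective. The hypothesis that the conditions of Lemma \ref{7.4.9} hold for $R$ — i.e. the Krull condition, which in our setting of $\eka^d$ rings and algebras of topologically finite eka presentation is exactly what Lemma \ref{7.3/7} supplies — is precisely what licenses the use of Proposition \ref{7.4.11}.

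The main obstacle, and the place where I expect to spend the real effort, is verifying that Proposition \ref{7.4.11} is actually applicable: it requires the Krull/Artin--Rees-type condition of Lemma \ref{7.4.9} for the submodule $K\subset R^{\oplus n}$, i.e. that for every $n\geq 0$ there is $m\geq 0$ with $K\cap I^m R^{\oplus n}\subseteq I^n K$. Since $R$ is only assumed admissible with a finitely generated (hence, in our type (V) situation, principal) ideal of definition $I=(\pi)$, this is not automatic for a general submodule; one must invoke that $R$ is $\eka^d$ (or that we are inside an algebra of topologically finite eka presentation) so that Lemma \ref{7.3/7} applies and the $\pi$-adic topology on $R^{\oplus n}$ restricts to the $\pi$-adic topology on $K$. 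Once that is in hand, the rest is the diagram chase above, and the isomorphism $M\otimes_R\rwhat R\xra{\sim}\rwhat M$ follows. I would close by remarking that this recovers Lemma \ref{7.3/14} as the special case $B=A$, $M$ finite over $B$.
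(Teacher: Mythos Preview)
Your proposal is correct and is essentially the paper's own proof: both tensor the given presentation with $\rwhat{R}$, use [Tag 0315] for surjectivity of the vertical maps, invoke Proposition \ref{7.4.11} (under the hypothesis of Lemma \ref{7.4.9}) for exactness of the completed row, and finish with the snake lemma. Your version is in fact more careful than the paper's in distinguishing $\wbr{K}$ from $\rwhat{K}$ and in flagging that surjectivity of $K\otimes_R\rwhat{R}\to\rwhat{K}$ needs $K$ to be finite, which the paper tacitly assumes.
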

\begin{proof}

Let $M$ have a presentation as $0\ra K\ra R^{\oplus n}\ra M\ra 0$, then we get the following diagram
\begin{equation}
\begin{tikzpicture}
 []
        \matrix (m) [
            matrix of math nodes,
            row sep=2.5em,
            column sep=1.5em,
                   ]
{ |[name=w1]|~~ & |[name=q1]|K\otimes_R \rwhat{R} &  |[name=a1]|A^{\oplus n}\otimes_R \rwhat{R} & |[name=b1]|M\otimes_R \rwhat{R}& |[name=c1]|0 \\
  |[name=w2]|0 &|[name=q2]|\rwhat{K} &  |[name=a2]|{A^{\oplus n}}^\wedge & |[name=b2]|\rwhat{M}& |[name=c2]|0\\
        };
        \path[overlay,->, font=\scriptsize,>=latex]
 (q1) edge (a1)
(a1) edge (b1)
(b1) edge (c1)

(w2) edge (q2)
(q2) edge (a2)
(a2) edge (b2)
(b2) edge (c2)

(q1) edge (q2)
(a1) edge (a2)
(b1) edge (b2)

 ;
\end{tikzpicture}
\end{equation}

The vertical arrows are surjective since  the map $M\otimes_R\rwhat{R}\ra\rwhat{M}$ is surjective for any finite $R$ module $M$ \cite[tag{0315}]{stacks-project}. The exactness of the first row comes from the right exactness of the tensor product. The exactness of the second row comes from Proposition \ref{7.4.11}. The snake lemma then gives $M\otimes_R\rwhat{R}\ra\rwhat{M}$ is an isomorphism.
\end{proof}
\begin{lemma}\label{G1}
  \begin{enumerate}
    \item  The canonical map $A[T^{1/p^\infty}]\ra A\lr{T^{1/p^\infty}}$ is flat for $A$ Notherian.
    \item The canonical map $K[T^{1/p^\infty}]\ra K\lr{T^{1/p^\infty}}$ is flat for $K$ where $K$ is a field.
    \item The canonical map $R[T^{1/p^\infty}]\ra R\lr{T^{1/p^\infty}}$ is flat for $R$ $\eka^p$.

        \item Let $B=K\lr{\zeta^{1/p^\infty}}$, then the canonical map $B[T]\ra B\lr{T}$ is flat.
    \item Let $B=K\lr{\zeta^{1/p^\infty}}$, then the canonical map $B[T^{1/p^\infty}]\ra B\lr{T^{1/p^\infty}}$ is flat.

  \end{enumerate}

\end{lemma}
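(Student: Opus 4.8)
The plan is to treat all five statements uniformly via the \emph{ideal criterion for flatness}: a module map $B \to C$ of rings is flat precisely when for every finitely generated ideal $\mathfrak{b} \subset B$, the canonical map $\mathfrak{b} \otimes_B C \to C$ is injective, i.e. $\mathfrak{b} C$ computes the base change of $\mathfrak{b}$. In each case the target ring is the $I$-adic (or $\varpi$-adic) completion of the source ring with respect to a finitely generated ideal of definition, so I would first identify the completion structure explicitly and then reduce a flatness statement about the completion to the known flatness of completion for Noetherian rings together with the direct-limit arguments already established in the paper.

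For part (1), write $A[T^{1/p^\infty}] = \varinjlim_i A[T^{1/p^i}]$, each $A[T^{1/p^i}]$ Noetherian, and note $A\lr{T^{1/p^\infty}} = \varprojlim_\lambda \varinjlim_i A[T^{1/p^i}]$ is the $I$-adic completion of $A[T^{1/p^\infty}]$ by Construction \ref{const1a} and Lemma \ref{admissible1}. The classical fact that $A[T^{1/p^i}] \to A\lr{T^{1/p^i}}$ is flat (it is $I$-adic completion of a Noetherian ring, \cite[7.5.4]{PMIHES_1960__4__5_0}, or Remark \ref{7.5.4}) gives, after applying the exact functor $\varinjlim_i$ to the short exact sequences $0 \to \mathfrak{a}_i \otimes A\lr{T^{1/p^i}} \to A\lr{T^{1/p^i}}$ witnessing the ideal criterion, the flatness of $A[T^{1/p^\infty}] \to \varinjlim_i A\lr{T^{1/p^i}}$; the remaining step is to pass from $\varinjlim_i A\lr{T^{1/p^i}}$ to its own completion $A\lr{T^{1/p^\infty}}$, which by Proposition \ref{G2} (applied with the Krull/Artin-Rees condition of Lemma \ref{7.3/7}, valid since $A[T^{1/p^\infty}]$ is coherent by Proposition \ref{coherent1} when $A$ is $\eka^p$, and directly when $A$ is Noetherian) identifies $\mathfrak{b} \otimes \widehat{(\cdot)}$ with $\widehat{\mathfrak{b}}$, so no torsion is introduced. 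Part (2) is the special case $A = K$ a field, where every module is flat so the statement is that $K[T^{1/p^\infty}] \to K\lr{T^{1/p^\infty}}$ is faithfully flat — again the ideal criterion, but now trivial on the $\otimes$ side since finitely generated ideals of a polynomial ring over a field are finitely presented, so one only needs that completion preserves their injectivity, which is Proposition \ref{G2}. Part (3) is then identical to (1) with $A$ replaced by an $\eka^p$ ring $R = A[a^{1/p^\infty}]$: here $R[T^{1/p^\infty}]$ is coherent by Proposition \ref{coherent1}, so finitely generated ideals are finitely presented and Proposition \ref{G2} applies verbatim.

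For parts (4) and (5), set $B = K\lr{\zeta^{1/p^\infty}}$, which is the completion of $K[\zeta^{1/p^\infty}]$ with respect to the ideal of definition coming from the uniformizer; I would first observe that $B$ is itself coherent (it is $K\lr{T^{1/p^\infty}}$ with $T = \zeta$, so Corollary \ref{coherent2} applies), hence finitely generated ideals of $B[T]$ and of $B[T^{1/p^\infty}]$ are finitely presented. For (4), $B\lr{T}$ is the $I$-adic completion of the Noetherian-over-$B$ ring $B[T]$; apply the ideal criterion and Proposition \ref{G2} to conclude $\mathfrak{b} \otimes_{B[T]} B\lr{T} = \mathfrak{b}B\lr{T} \hookrightarrow B\lr{T}$. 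For (5), combine the direct-limit argument of part (1) (over the tower $B[T^{1/p^i}]$) with (4) applied to each finite level, then complete using Proposition \ref{G2} once more.

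\textbf{Main obstacle.} The delicate point is the passage from a flatness statement at the level of the \emph{increasing union} $\varinjlim_i A\lr{T^{1/p^i}}$ to the \emph{completed} ring $A\lr{T^{1/p^\infty}}$: $\varinjlim$ and $\varprojlim_\lambda$ do not commute in general, and completion is only exact on finitely generated modules over coherent rings. The whole argument therefore hinges on having established coherence of $R[T^{1/p^\infty}]$ (Proposition \ref{coherent1}) so that finitely generated ideals are finitely presented, and on Proposition \ref{G2} (which in turn rests on the Artin-Rees/Krull condition verified in Lemma \ref{7.3/7}) to guarantee $\widehat{\mathfrak{b}} = \mathfrak{b} \otimes \widehat{R[T^{1/p^\infty}]}$ with no spurious torsion — equivalently, that the completion map does not kill the injectivity $\mathfrak{b} \hookrightarrow R[T^{1/p^\infty}]$ witnessing the ideal criterion. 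Once those two inputs are in place, the direct-limit bookkeeping is routine and exactness of $\varinjlim$ does the rest.
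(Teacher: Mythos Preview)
Your approach is essentially the same as the paper's: reduce each flatness assertion to the ideal criterion, show that every finitely generated ideal of the source ring is finitely \emph{presented} (i.e.\ the source is coherent), and then invoke Proposition~\ref{G2} to identify $\id{b}\otimes\rwhat{R}$ with $\rwhat{\id{b}}\hookrightarrow\rwhat{R}$. That is exactly what the paper does, and its entire argument for parts (1)--(3) is the one-line observation that any finitely generated ideal of $A[T^{1/p^\infty}]$ (resp.\ $R[T^{1/p^\infty}]$) already lives in some Noetherian $A[T^{1/p^i}]$ (resp.\ $A[a^{1/p^i}][T^{1/p^i}]$) and is therefore finitely presented there, hence after applying $\varinjlim$ also in the limit.

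Two remarks on your write-up. First, the intermediate passage through $\varinjlim_i A\lr{T^{1/p^i}}$ in part~(1) is a detour: once you know coherence of $A[T^{1/p^\infty}]$ and invoke Proposition~\ref{G2}, the flatness of $A[T^{1/p^\infty}]\to A\lr{T^{1/p^\infty}}$ follows directly without ever mentioning the uncompleted colimit of the $A\lr{T^{1/p^i}}$. The paper dispenses with this step entirely. Second, for parts~(4)--(5) the paper does not argue via ``$B$ coherent $\Rightarrow$ $B[T]$ coherent'' (which, as you may know, is not automatic for arbitrary coherent $B$); instead it writes $B[T]=K\lr{\zeta^{1/p^\infty}}\otimes_K K[T]$ and claims that any finitely generated ideal $\id{b}\subset B[T]$ has the form $K\lr{\zeta^{1/p^\infty}}\otimes_K\id{b}'$ for some ideal $\id{b}'\subset K[T]$, then base-changes a finite presentation of $\id{b}'$ along the flat map $K\to K\lr{\zeta^{1/p^\infty}}$. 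Your route and the paper's route both aim at the same endpoint (finite presentation of $\id{b}$, then Proposition~\ref{G2}), but the mechanism for obtaining the presentation in (4)--(5) differs.
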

\begin{proof} The story boils down to showing a finite presentation of a finitely generated ideal $\id{q}$ of a ring $R$. The proposition \ref{G2} then gives an isomorphism $q\otimes\rwhat{R}=\rwhat{q}$. But, $\rwhat{q}$ is an ideal of $\rwhat{R}$ implying the injectivity of the map $\id{q}\otimes\rwhat{R}\ra\rwhat{R}$ which implies flatness of the map $R\ra\rwhat{R}$. Hence, the lemma follows if there is a finite presentation in each of the cases.
  \begin{enumerate}
    \item Consider a finitely generated ideal $\id{a}$ of $A[T^{1/p^\infty}]=\cup_iA[T^{1/p^i}]$, then $\id{a}\in A[T^{1/p^i}]$ for some $i$ which is notherian. Therefore, $\id{a}$ is finitely presented.
    \item Follows from the previous.
    \item Consider a finitely generated ideal $\id{a}$ of $R[T^{1/p^\infty}]=\cup_iA[a^{1/p^i}][T^{1/p^i}]$, then $\id{a}\in A[a^{1/p^i}][T^{1/p^i}]$ for some $i$ which is notherian. Therefore, $\id{a}$ is finitely presented.

    \item Notice that $B[T]=K\lr{\zeta^{1/p^\infty}}\otimes_KK[T]$. Let $\id{b}$ be a finitely generated ideal of $B[T]$, then $\id{b}=K\lr{\zeta^{1/p^\infty}}\otimes_K\id{b}'$ where $\id{b}'$ is an ideal of $K[T]$ (notherian ring) and hence finitely presented. Apply the functor $K\lr{\zeta^{1/p^\infty}}\otimes_K-$ to the finite presentation of $\id{b}'$ to get a finite presentation of $\id{b}$.

\item Notice that $B[T^{1/p^\infty}]=K\lr{\zeta^{1/p^\infty}}\otimes_KK[T^{1/p^\infty}]$. Let $\id{b}$ be a finitely generated ideal of $B[T^{1/p^\infty}]$, then $\id{b}=K\lr{\zeta^{1/p^\infty}}\otimes_K\id{b}'$ where $\id{b}'$ is an ideal of $K[T^{1/p^i}]$ for some $i$ (notherian ring) and hence finitely presented. Apply the functor $K\lr{\zeta^{1/p^\infty}}\otimes_K-$ to the finite presentation of $\id{b}'$ to get a finite presentation of $\id{b}$.
  \end{enumerate}
\end{proof}
It is time to show Gabber's Lemma (see \cite[Lemma 2 pp. 181]{bosch2014lectures} ), the non notherian part has been taken care of in Lemma \ref{G1}, below is simple base change.
\begin{lemma}\label{8.2/2}
Let $R$ be an adic ring of type (V or $\eka^d$) with ideal of definition generated by $\pi$ and let $A$ be an $R$-algebra of topologically finite eka presentation and $C$ an $A$-algebra
of finite type. Then the $(\pi)$-adic completion $\rwhat{C}$ of $C$ is flat over $C$ .
\end{lemma}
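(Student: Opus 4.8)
The plan is to reduce the statement to a flatness fact of exactly the type already established in Lemma~\ref{G1}, by an elementary base change. Since $C$ is of finite type over $A$, write $C=D/\id{J}$ with $D=A[Y_1,\ldots,Y_m]$ a polynomial ring over $A$. Applying Lemma~\ref{7.3/14} with $B=D$ and $M=C$ (a finite, indeed cyclic, $D$-module) gives a natural isomorphism $\widehat{C}\cong C\otimes_D\widehat{D}$, where $\widehat{D}$, the $(\pi)$-adic completion of $D$, is the ring of restricted power series $A\lr{Y_1,\ldots,Y_m}$. Consequently, if the canonical map $D\ra\widehat{D}$ is flat, then so is $C=D\otimes_D C\ra\widehat{D}\otimes_D C=\widehat{C}$ by flat base change, and everything reduces to showing that $D\ra\widehat{D}=A\lr{Y_1,\ldots,Y_m}$ is flat.

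To prove this I would run the ideal criterion of flatness exactly as in the proof of Lemma~\ref{G1}: it suffices to check that for every finitely generated ideal $\id{q}\subset D$ the map $\id{q}\otimes_D\widehat{D}\ra\widehat{D}$ is injective. Assuming that $\id{q}$ admits a finite presentation over $D$ and that $D$ satisfies the Artin--Rees/Krull hypothesis of Lemma~\ref{7.4.9} (for which the elementary inclusions $(\pi^{k+\ell}M)\cap N\subset\pi^k N\subset(\pi^k M)\cap N$ from the proof of Lemma~\ref{7.3/7} should again suffice), Proposition~\ref{G2} gives an isomorphism $\id{q}\otimes_D\widehat{D}\xra{\sim}\widehat{\id{q}}$. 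Since $\widehat{\id{q}}$ sits inside $\widehat{D}$ as an ideal, the composite $\id{q}\otimes_D\widehat{D}\xra{\sim}\widehat{\id{q}}\hookrightarrow\widehat{D}$ is injective, which is precisely the flatness of $D\ra\widehat{D}$.

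The real content is therefore the finite presentation of finitely generated ideals of $D=A[Y_1,\ldots,Y_m]$ — equivalently the coherence of $D$ — and this is where the non-Noetherian hypothesis bites, since coherence need not be inherited by a polynomial ring over the coherent but non-Noetherian ring $A$. I would get around this using the concrete shape $A=R\lr{X_1,\ldots,X_n}_\infty/\id{I}_\infty$ guaranteed by topologically finite eka presentation: a finitely generated ideal $\id{q}\subset D$ is already defined over a finite level $R\lr{X_1^{1/p^{j}},\ldots,X_n^{1/p^{j}}}[Y_1,\ldots,Y_m]$, which is Noetherian when $R$ is Noetherian, and which — when $R$ is $\eka^p$ — becomes coherent after descending $R$ to a Noetherian subring by keeping only finitely many $p$-power roots of the ideal of definition and arguing as in Proposition~\ref{coherent1}; passing back up through the direct limit over levels then produces a finite presentation of $\id{q}$. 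With coherence of $D$ and the Krull condition in hand, Proposition~\ref{G2} applies, $D\ra\widehat{D}$ is flat, and the base change $-\otimes_D C$ concludes the proof. I expect the main technical annoyance to be the simultaneous bookkeeping of the two families of auxiliary variables — the fractional powers $X_i^{1/p^\infty}$ coming from $A$ and the honest polynomial variables $Y_j$ coming from $C$ — when verifying coherence of $D$.
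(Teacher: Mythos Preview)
Your reduction via Lemma~\ref{7.3/14} and flat base change to the flatness of $D=A[Y_1,\ldots,Y_m]\to\widehat{D}=A\lr{Y_1,\ldots,Y_m}$ is correct and is exactly how the paper proceeds (there with $A$ replaced by the free object $B=R\lr{\zeta^{1/p^\infty}}$, which makes no essential difference). The divergence, and the gap, is in how you establish that flatness.

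Your proposed finite-level descent for coherence of $D$ does not work as stated: you claim a finitely generated ideal $\id{q}\subset D$ is ``already defined over a finite level $R\lr{X_1^{1/p^{j}},\ldots,X_n^{1/p^{j}}}[Y_1,\ldots,Y_m]$'', but elements of the completed ring $A=R\lr{X_1,\ldots,X_n}_\infty/\id{I}_\infty$ are genuine infinite series (e.g.\ $\sum_{k\ge0}\pi^kX_1^{1/p^k}$) whose exponents range over all of $\ds{N}[1/p]$ and therefore do not lie in any $R\lr{X^{1/p^j}}$. The direct-limit trick of Proposition~\ref{coherent1} works for the \emph{polynomial} ring $R[X^{1/p^\infty}]$ precisely because every element there has finitely many terms; it does not transfer to its completion. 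So coherence of $A[Y]$ over $R$ is not available by this route, and with it the input to Proposition~\ref{G2} is missing.

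The paper sidesteps this entirely by invoking Lemma~\ref{8.2/1}: to show $\widehat{D}$ is flat over $D$ one checks $\pi$-torsion-freeness, flatness modulo $\pi$ (automatic, the map is an isomorphism mod $\pi$), and flatness after inverting $\pi$. The last condition lives over the field $K=R[\pi^{-1}]$ and becomes exactly the statement that $L[T]\to L\lr{T}$ is flat for $L=K\lr{\zeta^{1/p^\infty}}$ --- which is Lemma~\ref{G1}(4). Passing to the field $K$ is what makes the finite-presentation argument of Lemma~\ref{G1} go through; attempting it directly over $R$ runs into the obstruction above.
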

\begin{proof}
  The proof is simple adaptation from the last paragraph of proof Lemma 2 (see \cite[Lemma 2 pp. 181]{bosch2014lectures}).

  Start with a special case $C=A[T]$ (where $A$ is $R\lr{\zeta^{1/p^\infty}}$),  Lemma \ref{8.2/1}  implies that in order to prove $\rwhat{C}$ is flat over $C$, it suffices to show that $\rwhat{C}\otimes_R R[\pi^{-1}]$ is flat over $C\otimes_R R[\pi^{-1}]$ (or $C[\pi^{-1}]$). But, $R[\pi^{-1}]=K$ is a field and thus we need to show
\begin{equation}
\rwhat{C}\otimes_R K=L\lr{T} \text{ is flat over }{C}\otimes_R K=L[T]
  \end{equation}
where $L$ is $K\lr{\zeta^{1/p^\infty}}$ and the flatness follows from Lemma \ref{G1}.

Let $B$ be topologically finite eka type and consider epimorphism $B\ra A$ which can be extended to $\phi:B[Y]\ra C$ since $C$ is of finite type $A$ algebra. Let $\kr\phi:=\id{a}$ which implies that $C=B[Y]/\id{a}$. The flatness of $B\lr{Y}$ over $B[Y]$ (shown above, replace $T$ with $Y$) implies that $B\lr{Y}\otimes_{B[Y]}C=B\lr{Y}/\id{a}B\lr{Y}$ is flat over $C$ (by base change).

  It needs to be shown that $B\lr{Y}/\id{a}B\lr{Y}$ is $\pi$ adic completion of $B[Y]/\id{a}$. Now, consider the mapping $\varphi:B[Y]/\id{a}\ra B\lr{Y}/\id{a}B\lr{Y}$, which can be tensored with $R/(\pi)^n$ to yield isomorphism $\varphi\otimes_RR/(\pi)^n$ of polynomial algebras for any $n$. But,  $ B\lr{Y}/\id{a}B\lr{Y}$ is $\pi$ adically complete and separated by Proposition \ref{7.3/8} and hence it is $\pi$ adic completion of $C=B[Y]/\id{a}$.
\end{proof}

\subsection{Blow Up}
Let $\id{X}$ be a formal $R$ scheme which is locally of topologically finite eka presentation and let $\curly{A}\subset\Os_{\id{X}}$ be an ideal, it will be called open if it contains powers $I^n\Os_{\id{X}}$ where $I$ is the ideal of definition. The proposition \ref{8.1/5} then gives that $\curly{A}$ is associated to a coherent open ideal $\id{a}$ of the underlying $R$ algebra that is topologically finite eka presentation. Following definition 3 \cite[pp. 183]{bosch2014lectures}, along with other details, gives the following.

\begin{mdef}
Let $\mathscr{A}\subset \id{X}$ be a coherent open ideal and $\id{X}$ a formal $R$ scheme which is locally of topologically finite eka presentation. The formal blowing up of $\mathscr{A}$ on $\id{X}$ is the formal $R$ scheme
\begin{equation}
\id{X}_\mathscr{A}=\varinjlim_{n\in\ds{N}}\prj\left( \bigoplus_{d\geq 0}\mathscr{A}^d\otimes_{\Os_{\id{X}}}\frac{\Os_{\id{X}}}{I^n\Os_{\id{X}}}\right)
\end{equation}
along with a canonical projection $\id{X}_\mathscr{A}\ra \id{X}$. A blow up of the form above will be called \emph{admissible formal blow up}.
\end{mdef}
A immediate consequence of the definition is that if $\id{X}=\spf A$ for $\id{X}$ affine, then from proposition \ref{8.1/5} an ideal $\mathscr{A}\subset\Os_{\id{X}}$ is coherent open if and only if it corresponds to open ideal $\id{a}\subset A$. The blow up is now
\begin{equation}
\id{X}_\mathscr{A}=\varinjlim_{n\in\ds{N}}\prj\left(\bigoplus_{d\geq 0}\id{a}^d\otimes_R\frac{R}{I^n} \right).
\end{equation}

The blow up of topologically finite eka presentation might not yield topologically finite eka presentation, but if $R$ does not admit $I$ torsion, the blow up of topologically finite eka presentation yields topologically finite eka presentation.

In light of the above definition, the following results follow from \cite[pp. 185-186]{bosch2014lectures} by changing topologically finite presentation to topologically finite eka presentation.
\begin{proposition}
Admissible formal blowing up commutes with flat base change.
\end{proposition}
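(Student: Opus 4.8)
The assertion is local on the target, so I would first pass to the affine situation: write $\id{X}=\spf A$ with $A$ an $R$-algebra of topologically finite eka presentation, let the coherent open ideal $\mathscr{A}$ correspond to a coherent open ideal $\id{a}=\lr{f_0,\ldots,f_r}\subset A$ via the affine description of such ideals (Proposition \ref{8.1/5}), and let $\spf B\to\spf A$ be a flat morphism of formal $R$-schemes, so that each $B_n=B/I^{n+1}B$ is flat over $A_n=A/I^{n+1}A$. What then has to be shown is that the formal blowing up of $\id{a}B$ on $\spf B$ is canonically isomorphic, over $\spf B$, to $\id{X}_{\mathscr{A}}\times_{\spf A}\spf B$.

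The key algebraic input is a flatness fact about Rees algebras. For each $d\geq 0$ the multiplication map $\id{a}^d\otimes_A B\to B$ has image the ideal $(\id{a}B)^d$, and flatness of $B$ over $A$ forces this map to be injective; hence $\id{a}^d\otimes_A B\xrightarrow{\ \sim\ }(\id{a}B)^d$, and summing over $d$,
\begin{equation}
\Big(\bigoplus_{d\geq 0}\id{a}^d\Big)\otimes_A B\ \xrightarrow{\ \sim\ }\ \bigoplus_{d\geq 0}(\id{a}B)^d .
\end{equation}
Both sides are graded rings generated in degree one over their degree-zero parts, so $\prj$ of them commutes with the base change $A\to B$, and likewise with $A_n\to B_n$ after reduction modulo $I^{n+1}$. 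Feeding this into the definition $\id{X}_{\mathscr{A}}=\varinjlim_n\prj\big(\bigoplus_d\id{a}^d\otimes_R R/I^{n+1}\big)$, and using that the direct limit over $n$ commutes with the base change, I would get the desired isomorphism, since every step producing $\id{X}_{\mathscr{A}}$ has now been shown stable under $-\otimes_A B$. On the standard charts $C_i=A\lr{\frac{f_j}{f_i}}$ of Proposition \ref{8.2/7} this amounts to the chart-level identity $C_i\,\widehat{\otimes}_A B=B\lr{\frac{f_j}{f_i}}$, with $f_i,f_j$ read in $B$.

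To keep the non-Noetherian bookkeeping legitimate I would lean on the coherence already established: $A$ and the localizations $A\lr{f_i^{-1}}$ are coherent (Lemma \ref{7.3lemma6}), so the powers $\id{a}^d$ are finitely presented $A$-modules and the Rees construction behaves well, while the passage between ordinary and completed tensor products is governed by Proposition \ref{G2} together with Gabber's Lemma in the form of Lemma \ref{8.2/2}, guaranteeing that the $I$-adic completions entering the glueing of the charts agree with those computed after base change. The one genuine obstacle is this compatibility: one must verify that the flat morphism sends the ideal of definition $I\Os_{\id{X}}$ to the ideal of definition on $\spf B$, and that the pro-system $\{\prj(\,\cdot\ \otimes_R R/I^{n+1})\}_n$ defining $\id{X}_{\mathscr{A}}$ is preserved---this is exactly where flatness, rather than mere base change, is used to keep the torsion-sensitive Rees construction exact. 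Granting it, the argument then follows \cite[pp. 185--186]{bosch2014lectures} verbatim, with ``topologically finite presentation'' replaced throughout by ``topologically finite eka presentation''.
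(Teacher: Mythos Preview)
Your proposal is correct and follows essentially the same line as the paper: reduce to the affine case $\id{X}=\spf A$, $\id{Y}=\spf B$, then use flatness of $B$ over $A$ to identify $\id{a}^d\otimes_A B$ with $(\id{a}B)^d$ and carry this through $\prj$ and the direct limit over $n$. The paper's argument stops precisely there; your additional appeals to the explicit charts of Proposition~\ref{8.2/7}, to coherence (Lemma~\ref{7.3lemma6}), and to Gabber's Lemma via Proposition~\ref{G2} and Lemma~\ref{8.2/2} are superfluous for this statement, since the definition of $\id{X}_{\mathscr{A}}$ already builds in the $I$-adic completion as a direct limit of ordinary $\prj$'s, and no separate comparison of completed tensor products on charts is needed.
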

\begin{proof}
  Consider the affine case $\id{X}=\spf A,\id{Y}=\spf B$ and $\varphi:\id{X}\ra\id{Y}$ the base change morphism, and $B$ be topologically finite eka type. Let $\scr{A}$ be associated with finitely generated open ideal $\id{a}\subset A$.
\begin{equation}
  \begin{aligned}
    \id{X}_\scr{A}&=\varinjlim_{n\in\ds{N}}\prj\left(\bigoplus_{d=0}^n\id{a}^d\otimes_R\dfrac{R}{I^n} \right)\\
      \id{X}_\scr{A}\times_{\id{X}}\id{Y}&=\varinjlim_{n\in\ds{N}}\prj\left(\bigoplus_{d=0}^n\id{a}^d\otimes_AB\otimes_R\dfrac{R}{I^n} \right)\\
      \id{X}_\scr{A}\times_{\id{X}}\id{Y}&=\varinjlim_{n\in\ds{N}}\prj\left(\bigoplus_{d=0}^n(\id{a}^dB)\otimes_R\dfrac{R}{I^n} \right)\\
  \end{aligned}
\end{equation}
The last equality follows from the fact that if $B$ is flat over $A$ then $\id{a}^i\otimes_AB\ra\id{a}^iB$ is an isomorphism.

\end{proof}

\begin{corollary}
Let $\mathscr{A}\subset \id{X}$ be a coherent open ideal and $\id{X}$ a formal $R$ scheme which is locally of topologically finite eka presentation. Let $U\subset \id{X}$ be a open formal subscheme, the formal blowing up of the coherent ideal $\mathscr{A}|_U\subset\Os_U$ coincides with $\id{X}_\mathscr{A}\times_{\id{X}}U$.
\end{corollary}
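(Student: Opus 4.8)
The plan is to obtain this as a special case of the preceding proposition, that admissible formal blowing up commutes with flat base change, applied to the open immersion $j\colon U\hookrightarrow\id{X}$. First I would note that $j$ is flat and that $U$ is again locally of topologically finite eka presentation: both statements are local on $\id{X}$, and on an affine piece $\spf A\subset\id{X}$ the immersion $j$ is a composition of localizations $A\to A\lr{f^{-1}}$, which are flat by Corollary \ref{7.3/12} and again of topologically finite eka presentation by Corollary \ref{coro13}. Next I would observe that the pullback of $\mathscr{A}$ along $j$ introduces nothing new: since $\Os_U=\Os_{\id{X}}|_U$ and $j$ is an open immersion, the inverse image ideal $j^{*}\mathscr{A}\cdot\Os_U$ is exactly the restriction $\mathscr{A}|_U$. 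Granting that $\mathscr{A}|_U$ is a coherent open ideal on $U$ (see the next paragraph), the flat base change proposition then gives $\id{X}_\mathscr{A}\times_{\id{X}}U\cong U_{\mathscr{A}|_U}$, i.e.\ the formal blowing up of $\mathscr{A}|_U$ on $U$, which is the assertion.

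For the statement to even make sense one must check that $\mathscr{A}|_U$ is again a coherent open ideal on $U$. Openness is immediate, since $I^{n}\Os_U=(I^{n}\Os_{\id{X}})|_U\subseteq\mathscr{A}|_U$. Coherence is just the restriction of a coherent $\Os_{\id{X}}$-module to an open subscheme: on an affine piece $\spf A$ with $\mathscr{A}=\id{a}^{\Delta}$ one passes to $\spf A\lr{f^{-1}}$, where $\id{a}A\lr{f^{-1}}$ is finitely generated, hence coherent by the coherence Lemma \ref{7.3lemma6}, and one concludes globally via Corollary \ref{8rem4} together with Proposition \ref{8.1/5} (which matches coherent open ideal sheaves with coherent open ideals of the coordinate ring). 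This bookkeeping -- making sure $\mathscr{A}|_U$ lands in the category to which the blow-up construction applies -- is the only real subtlety; everything else is formal.

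Finally, I would spell out the affine computation underlying the base-change identification, since the blow-up here is the inductive system $\varinjlim_{n}\prj(\,\cdot\,\otimes_R R/I^{n})$. Taking $\id{X}=\spf A$ and $U=\spf A\lr{f^{-1}}$ a basic open, flatness of $A\to A\lr{f^{-1}}$ yields $\id{a}^{d}\otimes_A A\lr{f^{-1}}\cong(\id{a}A\lr{f^{-1}})^{d}$ for all $d$, so for each $n$ the graded $R/I^{n}$-algebras defining the two blow-ups agree after this localization, and $\prj$ of a graded algebra commutes with flat localization; taking $\varinjlim_{n}$ identifies $\id{X}_\mathscr{A}\times_{\id{X}}U$ with the blow-up of $\id{a}A\lr{f^{-1}}$ on $\spf A\lr{f^{-1}}$. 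Since $\id{X}_\mathscr{A}$, $U$, $\id{X}_\mathscr{A}\times_{\id{X}}U$ and $U_{\mathscr{A}|_U}$ are all obtained by gluing such affine pieces over a common affine refinement of covers of $\id{X}$, these local identifications glue to the stated isomorphism.
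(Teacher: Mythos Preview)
Your proposal is correct and is precisely the intended argument: the paper states this result as an immediate corollary of the preceding proposition (admissible formal blowing up commutes with flat base change) and gives no separate proof, so applying that proposition to the flat open immersion $U\hookrightarrow\id{X}$ is exactly what is meant. Your additional bookkeeping (coherence and openness of $\mathscr{A}|_U$, the affine computation) is more detail than the paper supplies but is entirely in line with its framework.
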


Recall that the scheme theoretic blow up for an ideal $\id{a}$ is given as
\begin{equation}
P=\prj\left( \bigoplus_{d\geq 0}\id{a}^d\right) \quad\text{where}\quad \id{a}\subset A
\end{equation}

If $I$ is the ideal of definition of ring $R$, the $I$ adic completion is given as
\begin{equation}
\rwhat{P}=\varinjlim_{n\in\ds{N}}\prj\left( \bigoplus_{d\geq 0}\id{a}^d\otimes_R \frac{R}{I^n}\right)
\end{equation}

The above leads to the following proposition.
\begin{proposition}
Let $\mathscr{A}\subset \id{X}$ be a coherent open ideal, associated with coherent open ideal $\id{a}\subset A$ (i.e. $\mathscr{A}=\id{a}^\Delta$) and $\id{X}$ a formal $R$ scheme which is locally of topologically finite eka presentation. Then the formal blowing up of $\id{X}_\mathscr{A}$ is obtained from the $I$ adic completion of the scheme theoretic blow up of $\id{a}$ on $\spec A$.
\end{proposition}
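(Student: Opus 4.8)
The plan is to reduce at once to the affine case and then to observe that the two objects are given by literally the same colimit, so that the only content is the identification of the individual terms with the thickenings of a genuine completion. Admissible formal blowing up commutes with restriction to open formal subschemes (the preceding corollary) and $I$ adic completion is visibly a local operation, so it suffices to treat $\id{X}=\spf A$ with $A$ an $R$ algebra of topologically finite eka presentation and $\scr{A}=\id{a}^\Delta$, where $\id{a}=(f_0,\ldots,f_r)\subset A$ is a coherent --- in particular finitely generated --- open ideal. The general statement then follows by gluing the affine isomorphisms, which are compatible on overlaps, and by checking that they respect the canonical projections to $\id{X}$.

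On the scheme theoretic side I would put $S=\bigoplus_{d\geq 0}\id{a}^d$ for the Rees algebra and $P=\prj S=\bl_{\id{a}}(\spec A)$ for the blow up of $\id{a}$ on $\spec A$. Since $\id{a}$ is finitely generated, $S$ is a finitely generated graded $A$ algebra generated in degree one, so $P\to\spec A$ is projective; in particular of finite type, and the formation of $P$ commutes with base change. The one computation worth recording is that for every $n$
\[
P\times_{\spec A}\spec(A/I^nA)=\prj\Big(\bigoplus_{d\geq 0}\id{a}^d\otimes_A A/I^nA\Big)=\prj\Big(\bigoplus_{d\geq 0}\id{a}^d\otimes_R R/I^n\Big),
\]
the first equality being compatibility of $\prj$ with the affine base change $\spec(A/I^nA)\to\spec A$, and the second holding because $IA$ is the ideal of definition of the $R$ algebra $A$, so $\id{a}^d/I^n\id{a}^d=\id{a}^d\otimes_R R/I^n$.

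Next I would recall the description of the $I$ adic completion $\rwhat{P}$ of $P$ as a formal scheme: its underlying space is $V(I\Os_P)\subset P$, its structure sheaf is $\varprojlim_n\Os_P/I^n\Os_P$, and on each affine chart $\spec B\subset P$ it restricts to $\spf\rwhat{B}$ for $\rwhat{B}$ the $IB$ adic completion; equivalently, since $V(I^n\Os_P)=P\times_{\spec A}\spec(A/I^nA)$, one has $\rwhat{P}=\varinjlim_{n\in\ds{N}}\big(P\times_{\spec A}\spec(A/I^nA)\big)$, which is exactly the formula recorded for $\rwhat{P}$ just before the proposition. Feeding the displayed identity into this colimit and comparing with the definition $\id{X}_\scr{A}=\varinjlim_{n}\prj\big(\bigoplus_{d\geq 0}\id{a}^d\otimes_R R/I^n\big)$ yields an isomorphism $\id{X}_\scr{A}\cong\rwhat{P}$, compatible with the projections to $\id{X}=\spf A$ because both are obtained by completing $P\to\spec A$ along $I$. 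Reassembling the affine pieces finishes the general case.

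I expect the middle step to be the only genuine obstacle: one must know both that $\prj$ of the Rees algebra commutes with the reductions modulo $I^n$ and that the colimit of those thickenings really does compute the $I$ adic completion of $P$. This is precisely where coherence (finite generation) of $\id{a}$ is indispensable --- without it $S$ need not be a finitely generated $A$ algebra, $P\to\spec A$ need not be of finite type, and base change for $\prj$ can fail. Everything else --- the gluing, the identification of the tensor products, the matching of the projections --- is routine. One should also note that this argument concerns the colimit presentation of $\id{X}_\scr{A}$ used in the definition; comparing $\rwhat{P}$ instead with $\prj$ of the completed Rees algebra $\bigoplus_{d}\widehat{\id{a}^{d}}$ would be a separate and more delicate claim that is not needed here.
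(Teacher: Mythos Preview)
Your proposal is correct and follows essentially the same approach as the paper: the paper's proof simply writes down $P=\prj\bigl(\bigoplus_{d\geq 0}\id{a}^d\bigr)$ and then $\rwhat{P}=\varinjlim_n\bigl(P\otimes_R R/I^n\bigr)=\varinjlim_n\prj\bigl(\bigoplus_{d\geq 0}\id{a}^d\otimes_R R/I^n\bigr)$, which is exactly the definition of $\id{X}_{\scr{A}}$. Your version is considerably more careful in justifying the base change for $\prj$ and the identification of the colimit with the $I$-adic completion, but the underlying argument is the same.
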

\begin{proof}
  Start with a scheme theoretic blow up and then complete $P$.
  \begin{equation}
    \begin{aligned}
    P&=\prj\left(\bigoplus_{d=0}^n\id{a}^d \right)\\
        \hat{P}&=\varinjlim_{n\in\ds{N}}\left(P\otimes_R\dfrac{R}{I^n}\right)=\varinjlim_{n\in\ds{N}}\prj\left(\bigoplus_{d=0}^n\id{a}^d\otimes_R\dfrac{R}{I^n} \right)
    \end{aligned}
  \end{equation}
\end{proof}

\subsection{Admissible Blow Up}
In this section the modified Gabber's lemma is required for the results to hold.

\begin{proposition}\label{8.2/7}
Let $A$ be topologically finite eka presentation and $\id{a}=\lr{f_0,\ldots,f_r}\subset A$ a coherent open ideal. Suppose $\id{X}=\spf A$ is the admissible formal affine $R$ scheme with coherent open ideal $\mathscr{A}=\id{a}^\Delta$ and $\id{X}_\mathscr{A}$ is formal blowing up of $\mathscr{A}$ on $\id{X}$. Then the following hold
\begin{enumerate}
\item The ideal $\scr{A}$ is a line bundle.
\item Let the ideal $\scr{A}$ be generated by $f_i, i=0,\ldots, r$ and $U_i$ be the corresponding locus in $\id{X}_\scr{A}$, then $\{U_i\}$ defines an open affine covering of $\id{X}_\scr{A}$.
\item With $C_i$ as given below, denote $A_i=C_i/(I\mathrm{-torsion})_{C_i}$ then $U_i=\spf A_i$ and the $I$ torsion of $C_i$ is same as the $f_i$ torsion.
\begin{equation}
C_i=A\lr{\frac{f_j}{f_i}}=\frac{A\lr{\xi_j}}{(f_i\xi_j-f_j)}\quad\text{where}\quad j\neq i
\end{equation}

\end{enumerate}

\end{proposition}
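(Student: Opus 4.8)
The plan is to transcribe the classical argument for admissible formal blow-ups (Bosch, \emph{Lectures}, \S 8.2) into the non-Noetherian eka world, using the coherence Lemma \ref{7.3/6}, the completeness Proposition \ref{7.3/8}, the ``completion commutes with finite presentations'' Proposition \ref{G2} and Lemma \ref{7.3/14}, the localization stability Corollary \ref{coro13}, and the Gabber-type flatness Lemma \ref{8.2/2} together with the torsion criterion Lemma \ref{8.2/1}. Set $S_n=\bigoplus_{d\geq 0}\id{a}^d\otimes_R R/I^{n+1}$, a graded ring generated in degree one over $S_{n,0}=A/I^{n+1}$ by the images of $f_0,\dots,f_r$. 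Then $\prj S_n$ is covered by the standard opens $D_+(f_i)$, and passing to $\varinjlim_n$ presents $\id{X}_\mathscr{A}=\varinjlim_n\prj S_n$ as covered by $U_i:=\varinjlim_n D_+(f_i)$; this is (2) on underlying spaces. A routine computation of the degree-zero part of the homogeneous localization $(S_n)_{(f_i)}$ gives a surjection $(A/I^{n+1})[\xi_j:j\neq i]\twoheadrightarrow(S_n)_{(f_i)}$, $\xi_j\mapsto f_j/f_i$, whose kernel is generated by the relations $f_i\xi_j-f_j$ together with the $f_i$-torsion, so the $n$-th chart ring is $\bigl((A/I^{n+1})[\xi_j]/(f_i\xi_j-f_j)\bigr)/(f_i\text{-torsion})$. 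Passing to $\varprojlim_n$ (i.e. $I$-adic completion), and using that $A\lr{\xi_j}$ is the $I$-adic completion of $A[\xi_j]$ while finite quotients of admissible rings stay admissible (Proposition \ref{7.3/8}, Corollary \ref{coro13}), this limit is $C_i/(I\text{-torsion})=A_i$; Lemma \ref{7.3/6} then makes $A_i$ coherent and of topologically finite eka presentation, so $U_i=\spf A_i$ is a genuine affine object of our category. This settles (2) and the identification $U_i=\spf A_i$ in (3); the delicate point — that building $C_i$ and killing its $I$-torsion really recovers the completion of the algebraic chart $A[\xi_j]/(f_i\xi_j-f_j)/(f_i\text{-torsion})$ — is exactly what Proposition \ref{G2}/Lemma \ref{7.3/14} and Lemma \ref{8.2/2} are for.

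\textbf{Step 2 (torsion and the non-zero-divisor property).} One inclusion is immediate: openness of $\id{a}$ gives $I^c\subseteq\id{a}=(f_0,\dots,f_r)$ for some $c$, so, writing $\pi$ for a generator of $I$, $\pi^c=\sum_j g_j f_j$; in $C_i$ every $f_j$ equals $f_i\xi_j$, hence $\pi^c=f_i h$ with $h\in C_i$, and $f_i^m x=0$ then forces $\pi^{cm}x=h^m f_i^m x=0$ — so the $f_i$-torsion of $C_i$ sits inside the $I$-torsion. For the reverse inclusion I would descend to the algebraic blow-up chart $E=A[\xi_j]/(f_i\xi_j-f_j)/(f_i\text{-torsion})$, which by construction embeds into $A[f_i^{-1}]$; since $A$ is admissible it is $I$-torsion-free, hence so are $A[f_i^{-1}]$ and $E$, i.e. $\pi$ is a non-zero-divisor on $E$. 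Flatness of $\rwhat{E}=A_i$ over $E$ (Lemma \ref{8.2/2}) propagates this to $A_i$, and Lemma \ref{8.2/1} over the one-dimensional base (generic fibre flat over the field $K=R[\pi^{-1}]$, closed fibre handled as in the Noetherian case) shows $A_i=C_i/(I\text{-torsion})$ has no $\pi$-torsion; combined with the easy inclusion this gives that the $I$-torsion and $f_i$-torsion of $C_i$ coincide, proving (3). Finally, lifting a relation $f_i\bar a=0$ in $A_i$ to $f_i x\in I\text{-torsion}$ in $C_i$ yields $f_i(\pi^k x)=0$, hence $\pi^k x\in f_i\text{-torsion}\subseteq I\text{-torsion}$, hence $x\in I\text{-torsion}$, i.e. $\bar a=0$; so $f_i$ is a non-zero-divisor on $A_i$.

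\textbf{Step 3 (the line bundle and the main obstacle).} On $U_i=\spf A_i$ the restriction $\mathscr{A}|_{U_i}$ is generated by $f_0,\dots,f_r$, hence — since $f_j=f_i\xi_j$ in $A_i$ — by the single non-zero-divisor $f_i$, so $\mathscr{A}|_{U_i}\cong\Os_{U_i}$ is free of rank one; as the $U_i$ cover $\id{X}_\mathscr{A}$, $\mathscr{A}$ is an invertible sheaf on $\id{X}_\mathscr{A}$, i.e. a line bundle, which is (1). I expect the genuine obstacle to be the interchange of $I$-adic completion with killing $f_i$-torsion in the non-Noetherian setting of Step 1 — equivalently, that $\varprojlim_n(S_n)_{(f_i)}$ is really $C_i/(I\text{-torsion})$ and that this ring lies in our category — which is precisely what the coherence Lemma \ref{7.3/6}, the completion results Proposition \ref{G2} and Lemma \ref{7.3/14}, and Gabber's Lemma \ref{8.2/2} are designed to supply; with those in hand the remaining gluing and compatibility checks are formal.
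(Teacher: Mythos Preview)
Your proposal follows essentially the same path as the paper's proof — both transcribe Bosch's \S 8.2 argument using the eka versions of Gabber's lemma (Lemma \ref{8.2/2}), the completion result (Lemma \ref{7.3/14}), and the admissibility of $A$ to compare torsion. The paper works directly with the scheme-theoretic blow-up $\prj\bigoplus_{d\geq 0}\id{a}^d$ and its $I$-adic completion rather than your level-by-level $S_n$ description, but this is cosmetic.

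There is one slip in your Step 2: you assert $\rwhat{E}=A_i$ and then use it to show $A_i$ has no $\pi$-torsion, but $A_i=C_i/(I\text{-torsion})$ is $\pi$-torsion-free by definition, so that deduction is vacuous, and the identity $\rwhat{E}=A_i$ is exactly what needs to be proved. The correct order (which the paper follows) is: Gabber's lemma makes $C_i$ flat over $B_i$, hence $(f_i\text{-torsion})_{C_i}=(f_i\text{-torsion})_{B_i}\otimes_{B_i}C_i$ and likewise for $I$-torsion; so it suffices to prove $(I\text{-torsion})_{B_i}=(f_i\text{-torsion})_{B_i}$, and this holds because $B_i/(f_i\text{-torsion})\simeq S_{(f_i)}\hookrightarrow A[f_i^{-1}]$ is $I$-torsion-free (your $E$). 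Equivalently, in your language, Lemma \ref{7.3/14} plus flatness gives $\rwhat{E}=C_i/(f_i\text{-torsion})_{C_i}$, not $A_i$; from its $\pi$-torsion-freeness you then read off $(I\text{-torsion})_{C_i}\subseteq(f_i\text{-torsion})_{C_i}$, and \emph{only then} conclude $\rwhat{E}=A_i$. With that reordering your argument is complete and matches the paper's.
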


\begin{proof}~\\
\begin{enumerate}
\item Recall that the scheme theoretic blow up for $X=\spec A$ is given as
$\til{X}=\prj \bigoplus_{d\geq 0}\id{a}^d$ with covering $\bigcup_{i=0}^rD_+(f_i)$ with $D_+(f_i)=\spec S_{(f_i)}$ (homogeneous localization), where $S=\bigoplus_{d\geq 0}\id{a}^d$ and  $\id{a}S_{(f_i)}\subset S_{(f_i)}$ induces invertible ideal $\id{a}\Os_{\til{X}}$.

The above story needs to be carried to formal schemes, in particular flatness has to be exhibited, which would make tensoring an exact functor and everything above would carry to formal schemes. The flatness for topologically finite eka presentation is shown in the modified Gabber's Lemma.

Let $\rwhat{S}_{(f_i)}$ denote the $I$ adic completion of $S_{(f_i)}$, which is flat over $S_{(f_i)}$ by modified Gabber's Lemma and hence the ideal $\id{a}\rwhat{S}_{(f_i)}\subset \rwhat{S}_{(f_i)}$ is invertible.

The formal scheme $\id{X}_\scr{A}$ is covered by $\spf \rwhat{S}_{(f_i)}$, thus $\scr{A}\Os_{\id{X}_\scr{A}}$ is invertible ideal on $\id{X}_\scr{A}$.

\item Since $U_i=\spf \rwhat{S}_{(f_i)}$, it defines an affine cover of $\id{X}$.

\item Start by explicitly describing the localization process and $I$ torsion.
\begin{equation}
B_i=A\left[\frac{f_j}{f_i} \right]= \frac{A\left[{\xi_j}\right]}{(f_i\xi_j-f_j)}\quad\text{where}\quad j\neq i
\end{equation}
The $I$ adic completion of $B_i$ and application of Lemma \ref{7.3/14} gives
\begin{equation}
\begin{aligned}
C_i&=B_i\otimes_{A[\xi_j]}A\lr{\xi_j}\quad\text{where}\quad j\neq i\\
&=A\lr{\frac{f_j}{f_i}}=\frac{A\lr{\xi_j}}{(f_i\xi_j-f_j)}\quad\text{where}\quad j\neq i
\end{aligned}
\end{equation}
By modified Gabber's lemma $C_i$ is flat over $B_i$ implying that
\begin{equation}
\begin{aligned}
(I~\text{torsion})_{C_i}&=(I~\text{torsion})_{B_i}\otimes_{B_i}C_i\\
(f_i~\text{torsion})_{C_i}&=(f_i~\text{torsion})_{B_i}\otimes_{B_i}C_i
\end{aligned}
\end{equation}
If $(I~\text{torsion})_{C_i}=(f_i~\text{torsion})_{C_i}$ Lemma \ref{7.3/14} gives the following with $U_i=\spf A_i$.
\begin{equation}
A_i=\rwhat{S}_{(f_i)}=\frac{A\lr{\dfrac{f_i}{f_j}}}{(I~\text{torsion})}\quad\text{where}\quad j\neq i
\end{equation}
The only thing left to show is that $(I~\text{torsion})_{C_i}=(f_i~\text{torsion})_{C_i}$ which follows from $(I~\text{torsion})_{B_i}=(f_i~\text{torsion})_{B_i}$. Since, $S_{(f_i)}$ does not admit $f_i$ torsion (localization) we get $B_i/(f_i~\text{torsion})\simeq S_{(f_i)}$. The open ideal $\id{a}B_i$ is generated by $f_i$ which gives
\begin{equation}
(f_i~\text{torsion})_{B_i}\subset(I~\text{torsion})_{B_i}.
\end{equation}
But, the formal scheme $\id{X}$ is admissible, hence there is not $I$ torsion in the homogeneous localizations $S_{(f_i)}$ giving the required equality
\begin{equation}
(f_i~\text{torsion})_{B_i}=(I~\text{torsion})_{B_i}.
\end{equation}
\end{enumerate}

\end{proof}
\begin{rem}\label{8.2/8}
The formal blowing up $\id{X}_\scr{A}$ is an admissible formal $R$ scheme, since we always work with finitely generated ideals $\id{a}$. This, condition can be relaxed in the case of Topologically finite type as given in Corollary 8 of \cite[pp. 188]{bosch2014lectures}.
\end{rem}

The proposition \ref{8.2/7} leads to following proposition with the same proofs as given in Proposition 9 and Remark 10 of \cite[pp. 189]{bosch2014lectures}.

\begin{proposition}\label{8.2/9/10}
Let $\id{X}$ be a admissible formal $R$ scheme, and $\scr{A}\subset\Os_\id{X}$ a finitely generated coherent open ideal, with $\id{X}_\scr{A}$ the formal blow up. Then the following hold
\begin{enumerate}
\item Let $\id{Y}$ be a formal scheme equipped with a morphism $\phi:\id{Y}\ra\id{X}$ such that $\scr{A}\Os_{\id{Y}}$ is an invertible ideal in $\Os_{\id{Y}}$, then $\phi$ factors through $\id{X}_\scr{A}$.

\item Let $\scr{B}$ be another finitely generated coherent open ideal in $\Os_\id{X}$ and denote $B=\scr{B}\Os_{\id{X}}$. Then the formal blowing up of the ideal $\scr{AB}$ on $\id{X}$ is isomorphic to blowing up $B$ on $\id{X}_\scr{A}$, given as
\begin{equation}
(\id{X}_\scr{A})_B\ra\id{X}_\scr{A}\ra\id{X}
\end{equation}
\end{enumerate}

\end{proposition}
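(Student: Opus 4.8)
The plan is to reuse the classical universal property of blowing up (\cite[Proposition 9 and Remark 10, pp. 189]{bosch2014lectures}), feeding in the eka-theoretic inputs already at hand: the explicit affine description of $\id{X}_\scr{A}$ from Proposition \ref{8.2/7}, the coherence of finitely generated open ideals and their localizations (Lemma \ref{7.3/6}, Corollary \ref{coro13}), and the flatness of $I$-adic completion from the modified Gabber's Lemma \ref{8.2/2}. Since both assertions are local on $\id{X}$, I would first reduce to the affine case $\id{X}=\spf A$ with $A$ of topologically finite eka presentation and $\id{a}=\lr{f_0,\ldots,f_r}\subset A$ a finitely generated coherent open ideal, noting that the relevant pullbacks stay finitely generated coherent open ideals (by the coherence of Lemma \ref{7.3/6}) and that, by Proposition \ref{8.2/7}, $\id{X}_\scr{A}$ is covered by the affines $U_i=\spf A_i$ with $A_i=C_i/(I\text{-torsion})_{C_i}$ and $C_i=A\lr{f_j/f_i}=A\lr{\xi_j}/(f_i\xi_j-f_j)$, on each of which $\scr{A}\Os$ is the invertible ideal generated by the non-zero-divisor image of $f_i$.

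For part (1) I would argue as follows. Given $\phi:\id{Y}\ra\id{X}$ with $\scr{A}\Os_\id{Y}$ invertible, work locally: after refining an affine cover of $\id{Y}$, one may write $\id{Y}=\spf B$ with $\id{a}B$ free of rank one and generated by $\phi^*(f_i)$ for a suitable $i$; then $\phi^*(f_i)$ is a non-zero-divisor in $B$, so for each $j$ there is a unique $g_{ij}\in B$ with $\phi^*(f_j)=\phi^*(f_i)\,g_{ij}$, and $\xi_j\mapsto g_{ij}$ defines an $A$-algebra homomorphism $C_i\ra B$. Since $\phi^*(f_i)$ is a non-zero-divisor, this homomorphism kills the $f_i$-torsion of $C_i$, which by Proposition \ref{8.2/7}(3) equals its $I$-torsion, so it descends to $A_i\ra B$, i.e.\ to a morphism $\spf B\ra U_i$ over $\id{X}$. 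Uniqueness of the $g_{ij}$ and of the local generator up to a unit makes these local morphisms independent of the choices and compatible on overlaps, so they glue to a morphism $\id{Y}\ra\id{X}_\scr{A}$ factoring $\phi$; the same uniqueness statements show it is the only such factorization.

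For part (2) I would deduce everything from part (1). First, $\scr{A}\Os$ is invertible on $\id{X}_\scr{A}$ by Proposition \ref{8.2/7}(1), so on $(\id{X}_\scr{A})_\scr{B}$ the pullbacks of $\scr{A}$ and $\scr{B}$, and hence of $\scr{A}\scr{B}$, are invertible; by (1) the composite $(\id{X}_\scr{A})_\scr{B}\ra\id{X}$ factors through $\id{X}_{\scr{A}\scr{B}}$. Conversely, on $\id{X}_{\scr{A}\scr{B}}$ the ideal $\scr{A}\scr{B}\Os$ is invertible and, locally, a product of local sections of $\scr{A}\Os$ and $\scr{B}\Os$; using coherence of the rings $A\lr{T}_\infty$ and $A\lr{f_i^{-1}}$ one checks, after refining the cover, that $\scr{A}\Os$ and $\scr{B}\Os$ are each invertible there. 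Then (1) applied twice shows $\id{X}_{\scr{A}\scr{B}}\ra\id{X}$ factors through $\id{X}_\scr{A}$ and, $\scr{B}$ being invertible upstairs, through $(\id{X}_\scr{A})_\scr{B}$. The two morphisms $(\id{X}_\scr{A})_\scr{B}\ra\id{X}_{\scr{A}\scr{B}}$ and $\id{X}_{\scr{A}\scr{B}}\ra(\id{X}_\scr{A})_\scr{B}$ over $\id{X}$ thus obtained are mutually inverse by the uniqueness clause of (1), yielding the stated isomorphism and the factorization $(\id{X}_\scr{A})_\scr{B}\ra\id{X}_\scr{A}\ra\id{X}$.

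The step I expect to be the main obstacle is the non-Noetherian bookkeeping underlying the descent of $C_i\ra B$ to $A_i\ra B$ in part (1): one must know that the $U_i=\spf A_i$ are of topologically finite eka presentation and that the identification of $I$-torsion with $f_i$-torsion in Proposition \ref{8.2/7}(3) is stable under the base changes used, so that the local lifts genuinely descend — this is exactly where Lemma \ref{7.3/14} and the flatness of completion from Lemma \ref{8.2/2} are needed. A secondary difficulty in (2) is the assertion that an invertible ideal which factors as a product of two finitely generated ideals forces both factors to be invertible: the classical proof invokes Noetherianity, so here it has to be rerouted through the coherence results (Lemma \ref{7.3/6}, Corollary \ref{coro13}) together with the flatness of $I$-adic completion.
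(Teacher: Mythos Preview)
Your proposal is correct and follows the same route as the paper, which for part (1) reduces to the affine situation and produces the unique $A$-homomorphism $A_i\to B$ exactly as you do, and for part (2) simply defers to \cite[Remark~10, p.~189]{bosch2014lectures}; your write-up is in fact more detailed than the paper's own sketch.

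One remark on what you flag as the ``secondary difficulty'' in (2): you do not need to reroute the invertibility-of-factors step through coherence. The point is that $\scr{A}$ and $\scr{B}$ are \emph{open} and the ambient scheme is \emph{admissible}. Locally on $\id{X}_{\scr{A}\scr{B}}$ the ideal $\scr{A}\scr{B}\Os$ is generated by some $f_{i_0}g_{j_0}$, and since $\pi^n\in\id{a}$, $\pi^m\in\id{b}$ one has $\pi^{n+m}\in(f_{i_0}g_{j_0})$; thus $f_{i_0}x=0$ forces $\pi^{n+m}x=0$, hence $x=0$ by absence of $I$-torsion. So $f_{i_0}$ is already a non-zero-divisor, and from $f_{i_0}g_j\in(f_{i_0}g_{j_0})$ one cancels $f_{i_0}$ to get $\id{b}\Os=(g_{j_0})$; symmetrically $\id{a}\Os=(f_{i_0})$. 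This is exactly Bosch's argument and requires neither Noetherianity nor the coherence results you cite.
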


\begin{proof}
  \begin{enumerate}
    \item The problem can be considered locally. Once we have a proper description of $\id{Y}\ra\id{X}$ we can factorize it through $\id{X}_{\scr{A}}\ra\id{X}$. Let $\id{X}=\spf A, \id{Y}=\spf B$ and $\id{a}=(f_0,\ldots, f_r)\subset A$ associated to $\scr{A}$ such that the ideal $\scr{A}\Os_{\id{Y}}$ is invertible. This means that the ideal $\scr{A}\Os_{\id{Y}}$ is generated by some $f_iB$ and is invertible. Hence, we can consider the unique homomorphism
    \begin{equation}
    A_i:=  A\lr{f_j/f_i}/(f_i\text{-torsion})\ra B, \qquad i\neq j
    \end{equation}
    which extends $A\ra B$ (corresponding to $\id{Y}\ra\id{X}$) mapping fractions $f_j/f_i$ of $A_i$ into $B$.

    The uniqueness follows from the local nature, and factorization has to map locally into $U_i=\spf A_i$. But, $U_i$ is the locus corresponding to the ideal generated by $f_i$.

    \item See Remark 10 of \cite[pp. 189]{bosch2014lectures}.
  \end{enumerate}

\end{proof}

\pagebreak

\bibliographystyle{apalike}

\bibliography{\myreferences}
\bigskip
Assistant Professor of Mathematics and Computer Science\\
Alfred University, 1 Saxon Dr, Alfred, New York 14802

\end{document}